\begin{document}

\bibliographystyle{spmpsci}

\newcommand{\wk}{\mbox{$\,<$\hspace{-5pt}\footnotesize )$\,$}}
\newcommand{\smallslash}{\smaller[4]\mbox{$^{_{\scalebox{1.18}{/}}}$}}
\newcommand{\ndashv}{\mbox{$\smallslash$\hspace{-6.4pt}$\dashv$}}

\numberwithin{equation}{section}
\newtheorem{teo}{Theorem}
\newtheorem{lemma}{Lemma}

\newtheorem{coro}{Corollary}
\newtheorem{prop}{Proposition}
\theoremstyle{remark}
\newtheorem{remark}{Remark}
\newtheorem{scho}{Scholium}
\theoremstyle{definition}
\newtheorem{defi}{Definition}
\numberwithin{lemma}{subsection}
\numberwithin{prop}{subsection}
\numberwithin{teo}{subsection}
\numberwithin{defi}{subsection}
\numberwithin{coro}{subsection}
\numberwithin{figure}{subsection}

\numberwithin{remark}{subsection}
\numberwithin{scho}{subsection}
\newtheorem{open}{Open Problem}

\title{Angles in normed spaces}

\author[V. Balestro]{Vitor Balestro}
\address{CEFET/RJ Campus Nova Friburgo
\newline
28635-000 Nova Friburgo
\newline
Brazil
\newline
\&
\newline
Instituto de Matem\'{a}tica e Estat\'{i}stica
\newline
Universidade Federal Fluminense
\newline
24020-140 Niter\'{o}i
 \newline
Brazil}
\email{vitorbalestro@mat.uff.br}
\author[\'{A}. G. Horv\'{a}th]{\'{A}kos G. Horv\'{a}th}
\address{Institute of Mathematics
\newline
Budapest University of Technology and Economics
\newline
1111 Budapest
\newline
Hungary}
\email{ghorvath@math.bme.hu}
\author[H. Martini]{Horst Martini}
\address{Fakult\"at f\"ur Mathematik
\newline
Technische Universit\"at Chemnitz
\newline
09107 Chemnitz
\newline
Germany
\newline
\&
\newline
Department of Applied Mathematics
 \newline
Harbin University of Science and Technology
\newline
150080 Harbin
\newline
China}
\email{martini@mathematik.tu-chemnitz.de}
\author[R. Teixeira]{Ralph Teixeira}
\address{Instituto de Matem\'{a}tica e Estat\'{i}stica
\newline
Universidade Federal Fluminense
\newline
24020-140 Niter\'{o}i
 \newline
Brazil}
\email{ralph@mat.uff.br}

\begin{abstract} The concept of angle, angle functions, and the question how to measure angles present old and well-established mathematical topics referring to Euclidean space, and there exist also various extensions to non-Euclidean spaces of different types. In particular, it is very interesting to investigate or to combine (geometric) properties of possible concepts of angle functions and angle measures in finite-dimensional real Banach spaces (= Minkowski spaces). However, going into this direction one will observe that there is no monograph or survey reflecting the complete picture of the existing literature on such concepts in a satisfying manner. We try to close this gap. In this expository paper (containing also new results, and new proofs of known results) the reader will get a comprehensive overview of this field, including also further related aspects. For example, angular bisectors, their applications, and angle types which preserve certain kinds of orthogonality are discussed. The latter aspect yields, of course, an interesting link to the large variety of orthogonality types in such spaces.
\end{abstract}

\thanks{The first named author thanks to CAPES for partial financial support during the preparation of this manuscript}
\subjclass{Primary 52A21; Secondary 32A70, 46B20, 46C15, 46C50, 51M05, 51M25}
\keywords{angle function, angle measure, angular bisectors, Birkhoff orthogonality, equiangularity, inner-product space, isosceles orthogonality, Minkowski space, normed space, Pythagorean orthogonality, Radon planes, Singer orthogonality, strictly convex norm, Wilson angle}

\maketitle
\tableofcontents

\section{Introduction} \label{secintro}

\subsection{History}

The concept of angle and the question how to measure angles are old and interesting mathematical topics. It is clear that without using it, the pyramids and other monumental  buildings of the ancient world could not have  been built. Perhaps the first serious contribution, which examined properties of angles, arised from the Greek philosopher Eudemus of Rhodes (c. 370 BC -- c. 300 BC) whose paper \emph{On the angle} clarified   Aristotle's work on this concept (Aristotle was also his teacher). Unfortunately, this work is lost, but according to Proclus (412--485) (who read it) the concept of the angle by Eudemus is a quality that he regarded as deviation from a straight line (see in \cite{wehrli}). Proclus also quotes the view of Carpus of Antioch (between 2nd century BC and 2nd century AD) who regarded it as the interval or domain between the intersecting lines. Hence, according to him the angle is a quantity. For the interested reader we propose to read \emph{Proclus' Commentary on Euclid I} (see \cite{proclus}). From this source we also know that Euclid (of Alexandria, ~ c. 300 BC) adopted a third concept in which the angle is a relationship, besides his definitions of right, acute and obtuse angles. He defined perpendicularity of two lines and right angles via the four equal domains determined by them in its $10$-th definition, and he says then the obtuse angle is that which is larger than the right angle, and the acute angle the respectively smaller one (see \cite{euclid}).

During the long reign period of Euclidean geometry it was not important to ask how we can measure an angle. There were only few important situations when it was required to compare  corresponding pairs of lines
with concrete values, for example when the two half-lines (which are the legs of the angle) are concurrent, are complements with respect to a line, or divide the plane into general comparable parts. In these simple cases it is clear that if we define the measure the angle of two complementary half-lines of a line, then we get from it a measure for all angles via a rational operation. In practice, two methods spread. The first one arised (probably) from the Babylonians who knew that the perimeter of a hexagon was exactly equal to six times the radius of a circumscribed circle of it, a fact that was evidently the reason why they decided to divide the circle into $360$ parts (from the practical methods to the history of measuring angles in this way we propose to read the interesting paper of \cite{wallis}). Observe that if we measure an angle in degrees, at the same time one can realise an area-based and an arc-length based measurement of it. Despite this, for a long time the principle was known that the measure of an angle can be defined on the base of the ratio of the corresponding length of the circular arc and the radius of the circle, the concept of radian measure. This is opposed to the degree of an angle, normally credited to Cotes in the early 17th century (see \cite{cotes}.) Because radians are more mathematical, this led to a more elegant formulation of important results. Thus, in recent language of mathematics angles are universally measured in radians. The most inspirative example is the so-called Euler identity which says that if the inputs of the trigonometric functions $\sin$ and $\cos$ are given in radians, then $e^{ix}=\cos x +i\sin x$ holds for any real number $x$.

In the eighteenth and in the nineteenth century the thought that Euclidean geometry is the only possibility gradually lost its validity. If became obvious that in various (essentially Euclidean) situations  the concept of angle and the measurement of the angles should be reconsidered. Spherical geometry was the first non-Euclidean geometry in which the inner concept of the angle seriously required that we define and investigate the angles of space curves which are no longer Euclidean straight lines. This problem was solved easily, because there is a natural embedding of the sphere into Euclidean 3-space and the concept of angle can be transformed into the concept of planes of the space. This way was not possible in hyperbolic geometry, because there is no respective embedding into Euclidean $3$-space. Hence the problem to define angles required an axiomatic foundation of the used concept of angles, giving then the base of a more general definition. Hilbert's axiomatic approach to geometry (see \cite{Hil}) gave this frame. Many authors dealt with this problem in a large variety of interesting situations. Not giving a complete list, we mention here some important methods.

For continuously differentiable curves satisfying a general extremal property, the concept of angle was discussed by Bliss \cite{Bliss}. He defined his concept as follows: If $OA'$ and $OA$ are two extremal rays through the point $O$, and
$A'A$ is the arc of length $l$ of a transversal (which is a generalized circle passing through $A$ and $A'$, with center $O$) at the generalized distance $r$ from
$O$, then the generalized angle between $OA'$ and $OA$ is defined to be the limit
of the ratio $l/r$ as $r$ approaches zero. His analytical formulas reflect the usual computation methods in classical Euclidean and non-Euclidean geometries, and also for surfaces embedded into the Euclidean $3$-space.

When Minkowski introduced the concept of the norm function, it was immediately clear that in a normed plane both the concepts of orthogonality of lines and of angle measure should be revised. Since there was no reason to change the concept of angle domain, this situation was the first in the literature when the measurement of the angle and the definition of the angle domain were automatically separated. The difficulties here were communicated by the great differential geometer H. Busemann. In \cite{Bus2} he investigated the geometry of the so-called Finsler spaces, and he observed the following facts:
\emph{The volume problem makes it more than probable that an analogous
situation\footnote{as the Riemannian one} exists for Finsler spaces. Therefore the study of Minkowskian
geometry ought to be the first and main step, the passage from there to
general Finsler spaces will be the second and simpler step.
What has been done in Minkowskian geometry, what are the difficulties
and problems, and which tools will be necessary? Little has
been done, but the field is quite accessible. The main difficulty comes
from our long Euclidean tradition, which makes it hard (at least for
the author) to get a feeling for the subject and to conjecture the
right theorems.
The type of problem which faces us is clear: A Minkowskian geometry
admits in general only the translations as motions and not the
rotations. Since the group of motions is smaller, we expect more invariants.
By passing from Euclidean to projective geometry, ellipses,
parabolas, and hyperbolas become indiscernible. The present case
presents \texttt{ the much more difficult converse problem, to discern objects
which have always been considered as identical}.}

In \cite{Bus1}, Busemann discussed the "axiom" for angle measures in the case of plane curves belonging to a class $\mathcal{S}$ of open Jordan curves, holding the additional property that any two distinct points lie on exactly one curve of $\mathcal{S}$. He defined the notions of \textit{ray} $r$, \textit{angle} $D$ \emph{with} \emph{legs} $r_1$ \emph{and} $r_2$, and \textit{angle measure} $|D|$ on the set of angles having the following properties:
\begin{enumerate}
\item $|D|\geq 0$ (positivity),
\item $|D|=\pi$ if and only if $D$ is straight,
\item if $D_1$ and $D_2$ are two angles with a common leg but with no other common ray, then $|D_1\cup D_2|=|D_1|+|D_2|$ (additivity),
\item if $D_\nu \rightarrow D$, then $|D_\nu|\rightarrow D$ (continuity).
\end{enumerate}
He showed that these assumptions are sufficient to obtain many of the usual relationships
between angle measure and curvature. We note that Busemann collected the essential properties of an angle measure  that we have to require in every structure, where a natural concept of angle exists.

Lippmann \cite{Li1} considered the classical Minkowski space which was defined on the $n$-dimensional Euclidean space by a "metrische Grundfunction" $F$ which is a positive, convex functional on the space that is
homogeneous of first degree. In our terminology, $F$ is the norm-square function. To have convexity (following Minkowski's definition), Lippmann required continuity of the second partial derivative, and positivity of the second derivative of $F$. Hence the unit ball of the corresponding space is always smooth. He used the arcus cosine of the bivariate function
$$
(x,y):=\frac{\sum x_i\frac{\partial }{\partial x_i}F(y)}{F(x)}
$$
to measure the angle between $x$ and $y$. This yields a concept of transversality, namely: $x$ is \emph{transversal} to $y$ if $(x, y)=0$. A wide variety
of angle measures referring to metric properties can be found in the literature. E.g., Lippmann's papers \cite{Li2,Li3} contain typically metric definitions of angle measures. For the situation in (normed or) Minkowski planes we refer, in addition, to the papers already mentioned and Graham, Witsenhausen and Zassenhaus \cite{G-W-Z}. This paper refers to a useful metric classification of angles by their measures, and a good review on this topic can be found in the book of Thompson \cite{Tho}.

In the last few decades some authors rediscovered this interesting problem in connection with the problem of orthogonality. We have to mention P. Brass who in \cite{brass} redefined the concept of angle measure: an \textit{angle measure} is a measure $\mu$ on the unit circle $\partial B$ with center $O$ which is extended in the usual translation-invariant way to measure angles elsewhere, and which has the following properties:
\begin{enumerate}
\item $\mu(\partial B) = 2\pi$,
\item for any Borel set $S\subset \partial B$ we have $\mu(S) = \mu(-S)$, and
\item for each $p \in \partial B$ we have $\mu(\{p\}) = 0$.
\end{enumerate}

This concept was used in the papers of D\"uvelmeyer \cite{Duev}, Martini and Swanepoel \cite{martiniantinorms}, and Fankh\"anel \cite{fankhanel2009i,Fank}.

Another direction of research is to give immediate metric definitions of the angle of two vectors. In this direction we can find  papers of P. M. Mili\v{c}i\v{c} \cite{milicic2007b}, C. R. Diminnie, E. Z. Andalafte, R. W. Freese \cite{diminnie1988generalized,D-A-F} or H. Gunawan, J. Lindiarni, and O. Neswan \cite{gunawan2008p}. Further related papers on angle measures are \cite{Dek1}, \cite{Dek2}, \cite{Dek3}, \cite{Ling} \cite{Gol1} and \cite{Gol2}. It is our aim to survey such concepts. Definitions and main results are presented, sometimes with more detailed (and sometimes different) proofs. A new angle function  (namely, the S-angle) is also presented, and the definition of the B-angle is extended to a larger class of Minkowski spaces. Moreover, to our best knowledge the results in Lemma \ref{degeneracypangle}, Proposition \ref{homogeneitypangle}, Lemma \ref{degeneracyiangle}, Proposition \ref{homogeneityiangle}, Theorem \ref{thureytheorem} as well as a proof of continuity of the B-angle are new.

\subsection{Notation and basic concepts}
Throughout the text, $(X,||\cdot||)$ will denote a real normed space. We denote the \emph{closed segment} connecting two points $x,y \in X$ by $\mathrm{seg}[x,y]$, and the \emph{open segment} joining the same points by $\mathrm{seg}(x,y)$. For the \emph{line} spanned by $x,y \in X$ we write $\left<x,y\right>$. The \emph{unit ball} and the \emph{unit sphere} of $(X,\|\cdot\|)$ are denoted by $B$ and $S$, respectively. Furthermore, $B(\rho)$ and $S(\rho)$ stand for the \emph{ball} and for the \emph{sphere of radius} $\rho$ \emph{centered at the origin}. We introduce now the orthogonality concepts that will be used in the text. Two vectors $x,y \in X$ are said to be \\

\noindent$\bullet$ \emph{Birkhoff orthogonal} (denoted $x \dashv_B y$) when $||x+ty|| \geq ||x||$ for any $t \in \mathbb{R}$,\\
$\bullet$ \emph{isosceles orthogonal} (denoted $x \dashv_I y$) if $||x+y|| = ||x-y||$, \\
$\bullet$ \emph{Phytagorean orthogonal} (denoted $x \dashv_P y$) whenever $||x||^2 + ||y||^2 = ||x-y||^2$, and\\
$\bullet$ \emph{Singer orthogonal} (denoted $x \dashv_S y$) if either $||x||\cdot||y|| = 0$ or $\left|\left|\frac{x}{||x||}-\frac{y}{||y||}\right|\right| = \left|\left|\frac{x}{||x||}+\frac{y}{||y||}\right|\right|$.\\

    A comprehensive paper on orthogonality types in normed spaces is \cite{alonso}. Some other orthogonality types (namely, the ones obtained by the functional $g$) will be also dealt with in this paper; these will be defined later. \\

A Minkowski plane is said to be a \emph{Radon plane} if Birkhoff orthogonality is symmetric. In this case we say that its unit circle is a \emph{Radon curve}. Fixing a determinant form $[\cdot,\cdot]$ in the plane (which is unique up to constant multiplication), we get a new norm by setting
\begin{align*} ||x||_a = \sup\{|[x,y]|:y\in S\}. \end{align*}
We call it the \emph{antinorm} of $\|\cdot\|$. A plane is Radon if and only if the norm and the antinorm are proportional (see \cite{martiniantinorms}). In this case we always can consider the determinant form rescaled in such a way that we have $||\cdot|| = ||\cdot||_a$.\\

The book \cite{Tho} as well as the surveys \cite{martini2} and \cite{martini1} are basic references for Minkowski geometry at large.

\section{Angle functions}\label{abstractangles}

\subsection{Angle axioms}

One of the ways to define an angle concept in a normed space $(X,||\cdot||)$ is to consider a function $\mathrm{ang}:X_{o}\times X_{o} \rightarrow [0,\pi]$, where $X_{o}$ stands for the set $X\setminus \{o\}$. We will refer to this type of functions as \textit{angle functions}. Given such a function, we may define the \textit{measure of an angle} $\wk\mathbf{xyz}$ formed by three points to be $\mathrm{ang}(x-y,z-y)$. At first glance, we will not demand angle functions to have any properties; the method adopted in this expository paper is to enunciate some ``good" properties expected for angle functions and to study all the angle concepts regarding these properties. Some of these general properties already appeared in \cite{thurey1}, yielding the notion of \emph{angle spaces} for normed spaces endowed with some ``well behaving" angle function. First, we enunciate the properties that we believe to be the most ``natural" ones for angle functions: \\

\noindent\textbf{1.} For each fixed $x \in X_{o}$, the functions $y \mapsto\mathrm{ang}(x,y)$ and $y \mapsto \mathrm{ang}(y,x)$ are continuous surjective functions of $X_o$ \textit{(continuity)}; \\
\noindent\textbf{2.} $\mathrm{ang}(x,y) = \mathrm{ang}(y,x)$ for every $x,y \in X_{o}$ \textit{(symmetry)};\\
\noindent\textbf{3.} $\mathrm{ang}(\alpha x,\beta y) = \mathrm{ang}(x,y)$ for all $x,y \in X_{o}$ and $\alpha,\beta >0$ \textit{(homogeneity)};\\
\noindent\textbf{4.} $\mathrm{ang}(x,\alpha x+ \beta y)+\mathrm{ang}(\alpha x + \beta y,y) = \mathrm{ang}(x,y)$ for every $x,y \in X_{o}$ and $\alpha,\beta >0$ \textit{(additivity)}; and \\
\noindent\textbf{5.} $\mathrm{ang}(x,y) = 0$ if and only if $x = \alpha y$ for some $\alpha > 0$ \textit{(non-degeneracy)}.\\

We will refer to these five axioms as the \textit{structural axioms}, and we define now some other (not unexpected) properties that an angle function might have. We will call them \textit{positional properties}.\\

\noindent\textbf{6.} $\mathrm{ang}(x,y) = \pi$ if and only if $y = \alpha x$ for some $\alpha < 0$ \textit{(parallelism);}  \\
\noindent\textbf{7.} $\mathrm{ang}(x,y) + \mathrm{ang}(y,-x) = \pi$ \textit{(supplementarity)}; and \\
\noindent\textbf{8.} $\mathrm{ang}(x,y) = \mathrm{ang}(-x,-y)$ \textit{(opposite invariance)}.

\begin{lemma}\label{properties} Let $\mathrm{ang}:X_{o}\times X_{o} \rightarrow [0,\pi]$ be an angle function in a normed space $(X,||\cdot||)$. If $\mathrm{ang}$ satisfies the structural axioms, then the positional properties hold.
\end{lemma}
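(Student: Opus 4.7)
The plan is to establish the positional properties in the order (6), (7), (8), with the forward direction of (6) carrying almost all of the work.

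For the forward direction of (6), suppose $\mathrm{ang}(x,y)=\pi$; I claim $x$ and $y$ must be linearly dependent. Arguing by contradiction, fix any $s>0$; then $y-sx$ lies in $X_o$ and is linearly independent of both $x$ and $y$. For every $t>0$, the combination
\begin{equation*}
(1+ts)x + t(y-sx) = x + ty
\end{equation*}
has positive coefficients $1+ts$ and $t$, so additivity (axiom 4) applied to the pair $(x,y-sx)$ yields
\begin{equation*}
\mathrm{ang}(x,x+ty) + \mathrm{ang}(x+ty,\,y-sx) = \mathrm{ang}(x,y-sx).
\end{equation*}
Letting $t\to\infty$, the ray $(x+ty)/\|x+ty\|$ converges to $y/\|y\|$, so homogeneity and continuity (axioms 3 and 1) make the first summand tend to $\mathrm{ang}(x,y)=\pi$ and the second to $\mathrm{ang}(y,y-sx)$. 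Since the right-hand side is at most $\pi$, I conclude $\mathrm{ang}(y,y-sx)=0$, and non-degeneracy (axiom 5) then forces $y-sx$ to be a positive multiple of $y$, which is impossible for $s>0$ and $x,y$ independent. Hence $y=\alpha x$ for some real $\alpha$, and non-degeneracy excludes $\alpha\geq 0$. The converse of (6) is then immediate: surjectivity (axiom 1) supplies some $y_{0}$ with $\mathrm{ang}(x,y_{0})=\pi$; by the direction just established, $y_{0}$ is a negative multiple of $x$, so homogeneity delivers $\mathrm{ang}(x,-x)=\pi$ and hence $\mathrm{ang}(x,\alpha x)=\pi$ for every $\alpha<0$.

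For (7) I first treat the case $x,y$ independent; the dependent cases follow directly from (6), homogeneity, and non-degeneracy. Set $v:=x+ty$ for $t>0$. Applying additivity twice, once to $(x,y)$ with $v=1\cdot x+t\cdot y$ and once to $(v,-x)$ with $y=(1/t)v+(1/t)(-x)$ (all coefficients positive), and eliminating $\mathrm{ang}(v,y)$, I obtain
\begin{equation*}
\mathrm{ang}(x,v)+\mathrm{ang}(v,-x) = \mathrm{ang}(x,y)+\mathrm{ang}(y,-x).
\end{equation*}
The left-hand side is therefore independent of $t$, and letting $t\to 0^{+}$ sends $v\to x$, so continuity, non-degeneracy, and (6) collapse it to $0+\pi=\pi$. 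Finally, (8) is a one-line consequence: applying (7) to the pairs $(x,y)$ and $(y,-x)$ gives
\begin{equation*}
\mathrm{ang}(x,y)+\mathrm{ang}(y,-x)=\pi=\mathrm{ang}(y,-x)+\mathrm{ang}(-x,-y),
\end{equation*}
so the two extremes coincide.

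The main obstacle is the forward direction of (6): one has to spot that additivity applies to the non-obvious pair $(x,y-sx)$, that the combination $(1+ts)x+t(y-sx)$ secretly simplifies to $x+ty$, and that the limit $t\to\infty$ squeezes $\pi+\mathrm{ang}(y,y-sx)\leq\pi$ so that non-degeneracy can be invoked. Once (6) is in place, (7) and (8) are short consequences of additivity, continuity, and symmetry.
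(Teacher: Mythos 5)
Your proof is correct and follows essentially the same route as the paper: establish (6) first via additivity and non-degeneracy, derive (7) by a limiting argument built on additivity and continuity, and obtain (8) directly from (7). The only real difference is in the forward direction of (6), where the paper avoids any limit by applying additivity once to $z = 2y - x$ (so that $y = \tfrac{1}{2}x + \tfrac{1}{2}z$ and $\mathrm{ang}(x,y) < \mathrm{ang}(x,z) \leq \pi$ gives an immediate contradiction), which is shorter than your $t\to\infty$ squeeze with the auxiliary vector $y - sx$, but your version is equally valid.
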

\begin{proof} For property 6, let $x,y \in X_{o}$ be such that $\mathrm{ang}(x,y) = \pi$ and assume that $x$ and $y$ are independent. Set $z = 2y - x$. Hence $y = \frac{1}{2}x+\frac{1}{2}z$, and thus, from the structual axioms 4 and 5, we have $\mathrm{ang}(x,y) <\mathrm{ang}(x,z) \leq \pi$. This is a contradiction, and therefore $x$ and $y$ cannot be independent. The structural axiom 5 yields that we must have $x = \alpha y$ for some $\alpha < 0$. The converse is trivial. \\

For property 7, assume that $x,y \in X_{o}$ are independent (the other case is immediate). Define a sequence of vectors $(z_n)_{n\in\mathbb{N}}$ given by $z_n = \frac{1}{n}y + \left(\frac{1}{n}-1\right)x$. Since $y = nz_n + (n-1)x$, it follows from the additivity axiom that $\mathrm{ang}(x,y) + \mathrm{ang}(y,z_n) = \mathrm{ang}(x,z_n)$. With $n\rightarrow\infty$ the desired follows from the continuity axiom.\\

Property 8 is an easy consequence of property 7.

\end{proof}

We also consider the following two properties based on criteria for congruent triangles in Euclidean spaces:  \\

\noindent\textbf{9.} If $x,y,v,w \in X_{o}$ are such that $||x|| = ||v||$, $||y|| = ||w||$, $\mathrm{ang}(x,y) = \mathrm{ang}(v,w)$, and $x \neq y$, then $v \neq w$, and $\mathrm{ang}(x-y,-y) = \mathrm{ang}(v-w,-w)$. \\
\noindent\textbf{10.} If $x,y,v,w \in X_{o}$ are such that $||x|| = ||v||$, $||y|| = ||w||$, $\mathrm{ang}(x,y) = \mathrm{ang}(v,w)$, and $x \neq y$, then $v \neq w$ and $||x-y|| = ||v-w||$.\\

We will call them \textit{congruence properties}. Notice that they are not so natural as the structural axioms, and for that reason we prefer once more to use the word ``properties" instead of ``axioms". The next lemma states that they are practically equivalent.

\begin{lemma}\label{congequiv} If $\mathrm{ang}:X_{o}\times X_{o} \rightarrow [0,\pi]$ is an angle function which satisfies all the structural axioms, then properties \normalfont 9 \textit{and} 10 \textit{are equivalent.}
\end{lemma}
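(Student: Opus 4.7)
Both properties are SAS-style congruence statements: property 9 determines a further angle from the SAS data, and property 10 the opposite side. The plan is to prove each implication by the same scheme --- (i) derive from the hypothesized property the one datum it guarantees, (ii) construct an auxiliary triangle in $\mathrm{span}(x,y)$ that differs from the given triangle only in the quantity one is trying to pin down, and (iii) feed everything back through the hypothesized property and then use axiom 4 (additivity), axiom 7 (supplementarity), and axiom 8 (opposite invariance) to force a vanishing angle which, by non-degeneracy, collapses the auxiliary point onto the original.

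The conclusion $v\neq w$ is automatic in both directions: if $v=w$, axiom 5 forces $\mathrm{ang}(v,w)=0=\mathrm{ang}(x,y)$, hence $x=\alpha y$ with $\alpha>0$, and the norm equalities $\|x\|=\|v\|=\|w\|=\|y\|$ give $\alpha=1$, contradicting $x\neq y$.

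For 9 $\Rightarrow$ 10, apply property 9 to $(x,y),(v,w)$ to get $\mathrm{ang}(x-y,-y)=\mathrm{ang}(v-w,-w)=:\beta$, and to $(y,x),(w,v)$ (whose hypothesis holds by the symmetry axiom) to get $\mathrm{ang}(y-x,-x)=\mathrm{ang}(w-v,-v)=:\gamma$. Assume for contradiction $\|x-y\|>\|v-w\|$, set $\lambda:=\|v-w\|/\|x-y\|\in(0,1)$, and place $x':=(1-\lambda)y+\lambda x$ on the open segment from $y$ to $x$. By homogeneity, the mini-triangle at $y$ of $(0,y,x')$ carries the same SAS data as the mini-triangle at $w$ of $(0,w,v)$: common angle $\beta$ and matching side lengths $\|y\|$ and $\|v-w\|$. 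Property 9 applied to these mini-triangles yields, after simplification via axioms 2, 7, 8, the relation $\mathrm{ang}(x',x-y)=\gamma=\mathrm{ang}(x,x-y)$. Writing $x'=x+(1-\lambda)(y-x)$ as a strict positive combination and invoking axiom 4 gives $\mathrm{ang}(x,x')+\mathrm{ang}(x',y-x)=\mathrm{ang}(x,y-x)$; supplementarity turns each of the last two terms into $\pi-\gamma$, leaving $\mathrm{ang}(x,x')=0$ and hence $x'=\mu x$ for some $\mu>0$. This is incompatible with the defining convex combination of $x'$ unless $x,y$ are linearly dependent, a case disposed of by property 6 together with the norm equalities (where $y=-x$, $w=-v$ make the conclusion $\|x-y\|=\|v-w\|$ automatic). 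The opposite inequality is symmetric in the roles of the two pairs.

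For 10 $\Rightarrow$ 9, the dual construction: property 10 delivers $\|x-y\|=\|v-w\|$ immediately, and, assuming $\mathrm{ang}(x-y,-y)\neq\mathrm{ang}(v-w,-w)$, one builds $x''\in\mathrm{span}(x,y)$ with $\|x''-y\|=\|x-y\|$ but $\mathrm{ang}(x''-y,-y)$ equal to $\mathrm{ang}(v-w,-w)$. Applying property 10 to the matched mini-triangles $(-y,x''-y)$ and $(-w,v-w)$ forces $\|x''\|=\|v\|=\|x\|$, placing $x$ and $x''$ on the same sphere in $\mathrm{span}(x,y)$, and the same axiom-4-plus-supplementarity step then forces $\mathrm{ang}(x,x'')=0$, hence $x''=x$ by non-degeneracy --- contradicting the constructed inequality of angles at $y$. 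The main obstacle I expect lies in this converse: unlike the forward direction, where property 9 is applied twice to produce the two $\gamma$-equalities that feed axiom 4, here one must leverage property 10 together with the angle-sum identity (implicit in axioms 4 and 7) to obtain the analogous statement about the angles at $x$ and $x''$, and it is this extraction of planar uniqueness from the structural axioms alone (without any appeal to strict convexity) that carries the genuine technical content.
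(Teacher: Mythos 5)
Your preliminary observation that $v\neq w$ and your proof of 9 $\Rightarrow$ 10 are correct, and the latter is essentially the paper's own argument: place an auxiliary point on $\mathrm{seg}(x,y)$ at distance $\|v-w\|$ from $y$, apply property 9 to the mini-triangles with vertex at $y$ and at $w$, and let additivity plus non-degeneracy collapse the configuration. Your detour through the vertex at $x$ (the quantity $\gamma$, then supplementarity) is longer than necessary --- applying 9 to the pairs $(x'-y,-y)$ and $(v-w,-w)$ directly gives $\mathrm{ang}(x',y)=\mathrm{ang}(v,w)=\mathrm{ang}(x,y)$, and additivity at the origin finishes --- but it is valid; the dependent case is also fine, except that it is $y=cx$ for an arbitrary $c\neq 1$, not only $y=-x$.

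The converse 10 $\Rightarrow$ 9 has a genuine gap at the decisive step. You write that after property 10 forces $\|x''\|=\|x\|$, ``the same axiom-4-plus-supplementarity step then forces $\mathrm{ang}(x,x'')=0$.'' But that cancellation in the forward direction required two inputs that property 9 had supplied: an angle equality at a shared vertex ($\mathrm{ang}(x',x-y)=\mathrm{ang}(x,x-y)$) and the representation of the auxiliary point as a positive combination of $x$ and $y-x$, so that axiom 4 could be invoked and the two equal terms cancel. Here you hold only the side equalities $\|x''\|=\|x\|$ and $\|x''-y\|=\|x-y\|$; the only angle information about $x''$ is the angle at $y$, which by construction \emph{differs} from that of $x$, and $x''$ need not be a positive combination of $x$ and $y-x$. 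So nothing sets up the additivity identity, and no cancellation forces $\mathrm{ang}(x,x'')=0$. What your step really asserts is an SSS-rigidity statement: two points on the same side of the line through $o$ and $y$, equidistant from $o$ and from $y$, must coincide. As a purely metric statement this is false in general normed planes (e.g. in the $\ell_\infty$ plane one easily finds such distinct points), and as a consequence of the structural axioms plus property 10 it is precisely what remains to be proved --- it is in essence Lemma \ref{lemmaequallength}, which the paper establishes only later, with both congruence properties and strict convexity available. You correctly flag this ``extraction of planar uniqueness'' as the technical heart of the converse, but flagging it is not proving it. Note also that the paper's own converse uses a different auxiliary point, $p=\lambda x\in\mathrm{seg}(o,x)$, chosen so that the angle at the origin is preserved by homogeneity while the angle at $y$ is tuned to $\mathrm{ang}(v-w,-w)$, and then plays $\|p\|<\|x\|=\|v\|$ off against property 10; your construction, which instead fixes the length $\|x''-y\|$, loses the matched angle at a vertex where two matched sides are available, and that is exactly where the argument breaks.
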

\begin{proof} Assume first that property 9 holds. Let $x,y,v,w \in X_{o}$ be as in the hypothesis of property 10, but suppose (without loss of generality) that $||x-y|| > ||v-w||$. Thus, we may find a point $p \in \mathrm{seg}(x,y)$ such that $||y-p|| = ||v-w||$. Clearly, we must have $\mathrm{ang}(p,y) \neq \mathrm{ang}(x,y)$, and hence the triangles $\Delta\mathbf{poy}$ and $\Delta\mathbf{vow}$
yield a contradiction to property 9.\\

Assume now that property 10 holds. Let $x,y,v,w \in X_{o}$ be as in the hypothesis of property 9, but assume that $\mathrm{ang}(x-y,-y) > \mathrm{ang}(v-w,-w)$.  Thus, there is a point $p \in \mathrm{seg}(o,x)$ for which $\mathrm{ang}(p-y,-y) = \mathrm{ang}(v-w,-w)$. Since $||p|| < ||x||=||v||$, property 10 does not hold for the triangles $\Delta\mathbf{poy}$ and $\Delta\mathbf{vow}$.

\end{proof}

\subsection{Characterizing inner product spaces}

In \cite{diminnie1988generalized} it is proved that any normed space $(X,||\cdot||)$ for which there exists an angle function $\mathrm{ang}:X_{o}\times X_{o} \rightarrow [0,\pi]$ satisfying the structural axioms and also the property 9 (or, equivalently, property 10) must be an inner product space. Moreover, this angle function must be the standard Euclidean one. We are going to prove this fact.

\begin{teo}\label{angleinnerproduct} Let $(X,||\cdot||,\mathrm{ang})$ be a normed space endowed with an angle function which satisfies all the structural axioms and the congruence properties. Then the norm $||\cdot||$ is derived from an inner product.
\end{teo}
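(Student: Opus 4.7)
The plan is to reduce to the two-dimensional case via the Jordan--von Neumann characterization (a normed space is an inner product space iff every two-dimensional subspace is Euclidean) and then to exploit property 10 in order to produce a rich group of norm-preserving ``rotations'' on the unit circle. Note that the restriction of $\mathrm{ang}$ to any 2D subspace satisfies all the structural axioms and congruence properties, so this reduction is harmless.

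Working now in dimension two, I would first use the continuity axiom 1, additivity axiom 4, non-degeneracy axiom 5 together with the parallelism and supplementarity properties already guaranteed by Lemma \ref{properties} to parametrize the unit circle $S$ by a continuous bijection $\theta \mapsto u(\theta)$, $\theta \in [0,2\pi)$, in such a way that $\mathrm{ang}(u(\alpha),u(\beta)) = \min(|\alpha-\beta|,2\pi-|\alpha-\beta|)$; essentially this is an ``angular arc-length'' reparametrization along $S$ using the angle function itself. Property 10 (SAS) then tells us that for every shift $\gamma$, the pairs $(u(\alpha),u(\beta))$ and $(u(\alpha+\gamma),u(\beta+\gamma))$ have the same two norms and the same included angle, so $\|u(\alpha)-u(\beta)\| = \|u(\alpha+\gamma)-u(\beta+\gamma)\|$. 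Consequently every chord length on $S$ depends only on the angular difference, and the rotation $R_\gamma \colon u(\theta) \mapsto u(\theta+\gamma)$ preserves distances between any two points of $S$.

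Next I would extend $R_\gamma$ from $S$ to a map on all of $X$ by positive homogeneity, setting $R_\gamma(tu(\theta)) = tu(\theta+\gamma)$ for $t>0$, and $R_\gamma(o)=o$. The homogeneity axiom 3 for $\mathrm{ang}$ together with the positive homogeneity of $\|\cdot\|$ ensures this is well-defined and preserves all norms, and property 10 applied to triangles of the form $\Delta \mathbf{o\,(tu(\alpha))\,(su(\beta))}$ versus its $\gamma$-rotated image shows that $\|tu(\alpha)-su(\beta)\|$ is invariant under $R_\gamma$ for all positive $t,s$. After extending also to negative scalars via the opposite-invariance property $\mathrm{ang}(x,y)=\mathrm{ang}(-x,-y)$, a standard argument (translating the triangle with vertices $o, x, x+y$ by $R_\gamma$ and using the preservation of all the relevant distances together with strict convexity of directions guaranteed by axiom 5) yields linearity: $R_\gamma(x+y) = R_\gamma(x)+R_\gamma(y)$. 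Thus $\{R_\gamma\}_{\gamma\in[0,2\pi)}$ is a continuous one-parameter group of linear isometries of $(X,\|\cdot\|)$ acting transitively on $S$.

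Finally, a two-dimensional centrally symmetric convex body whose linear automorphism group acts transitively on its boundary must be a Euclidean disk (pick any invariant positive definite inner product, e.g., by averaging a fixed one over the compact rotation group; the unit ball for the averaged inner product is then invariant and, by transitivity, must coincide with $B$ up to a scalar). Hence $\|\cdot\|$ restricted to the two-dimensional subspace is induced by an inner product, and so $(X,\|\cdot\|)$ is an inner product space. The main obstacle will be Step 3: carefully upgrading the chord-preserving maps $R_\gamma$ defined first on $S$ to genuine \emph{linear} isometries of $X$. This requires simultaneously invoking non-degeneracy, homogeneity of both norm and angle, and property 10 at arbitrary scales, and must be done before one can invoke any ``rotation group'' argument; once additivity of $R_\gamma$ is established, the rest is classical.
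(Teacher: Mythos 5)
Your argument is correct in outline, but it takes a genuinely different route from the paper. The paper's proof is a short reduction to a known orthogonality-implication criterion: for isosceles orthogonal $x,y$ it applies an intermediate-value argument to $t\mapsto \mathrm{ang}(x-y,tx-y)-\mathrm{ang}(tx-y,-x-y)$, uses properties 9 and 10 to force $t_0=0$ and then $\mathrm{ang}(x,y)=\mathrm{ang}(x,-y)$, and concludes that isosceles orthogonality implies (Roberts, hence Birkhoff) orthogonality, which characterizes inner product spaces. You instead restrict to a two-dimensional subspace (legitimate, since the axioms and property 10 restrict, and Jordan--von Neumann handles the general case), build an angular parametrization of $S$ from continuity, additivity, non-degeneracy and the properties of Lemma \ref{properties} (this works: $\theta(z)=\mathrm{ang}(x_0,z)$ is strictly increasing along a half-circle by additivity plus non-degeneracy, and supplementarity gives $u(\theta+\pi)=-u(\theta)$, from which your formula for $\mathrm{ang}(u(\alpha),u(\beta))$ follows), and then use only property 10 at all scales to show each shift $R_\gamma$ is a norm-preserving, distance-preserving bijection of $X$ fixing $o$; averaging an auxiliary inner product over the resulting one-parameter group and invoking transitivity on $S$ then forces the unit circle to be an ellipse. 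What your approach buys is a self-contained geometric proof (transitive isometric ``rotations'' exist only for ellipses) that does not rely on the isosceles-implies-Birkhoff characterization, and it makes essential use of additivity, which the paper's argument barely touches; the price is heavier standard machinery (Mazur--Ulam, averaging, Jordan--von Neumann) and a longer construction. One caveat: your justification of linearity of $R_\gamma$ via ``strict convexity of directions guaranteed by axiom 5'' is not right -- axiom 5 is non-degeneracy of the angle function and gives no strict convexity of the norm, and in a non-strictly-convex plane a point is not determined by its distances to three non-collinear points, so the triangle-rigidity argument as stated could fail. The step is nevertheless immediately repaired: you have already shown $R_\gamma$ is a surjective isometry of $(X,\|\cdot\|)$ fixing the origin, so the Mazur--Ulam theorem gives linearity directly, after which your averaging and transitivity argument goes through.
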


\begin{proof} We prove that under the given assumptions isosceles orthogonality implies Birkhoff orthogonality. This is a known characterization of inner product spaces (see \cite{alonso}). Indeed, let $x,y \in X$ be nonzero vectors that are isosceles orthogonal, and define the function $f:[-1,1]\rightarrow\mathbb{R}$ by $f(t) = \mathrm{ang}(x-y,tx-y) - \mathrm{ang}(tx-y,-x-y)$. It is easy to see that $f(-1) > 0$ and $f(1) < 0$, and hence, by continuity, there exists a number $t_0 \in (-1,1)$ with $f(t_0) = 0$. This means that $\mathrm{ang}(x-y,t_0x-y) = \mathrm{ang}(t_0x-y,-x-y)$, and property 10 gives $||(t_0x-y)-(x-y)|| = ||(t_0x-y)-(-x-y)||$. It follows immediately that $t_0 = 0$, and then $\mathrm{ang}(x-y,-y) = \mathrm{ang}(-y,-x-y)$. From property 9 we have $\mathrm{ang}(x,y) = \mathrm{ang}(x,-y)$, and hence homogeneity gives $\mathrm{ang}(\lambda x,y) = \mathrm{ang}(\lambda x, -y)$ for every $\lambda \in \mathbb{R}$. Applying again property 10, we therefore have that $x$ and $y$ are Roberts orthogonal, and thus also Birkhoff orthogonal.
The proof is finished.
\end{proof}

We prove now that any angle function satisfying all the structural axioms and congruence properties is the standard Euclidean angle. For this purpose we first need the following lemma.

\begin{lemma} \label{lemmaequallength} Let $\mathrm{ang}:X_{o}\times X_{o} \rightarrow \mathbb{R}$ be an angle function defined in a normed space $(X,||\cdot||)$ and assume that $\mathrm{ang}$ satisfies all the structural axioms and the congruence properties. If $x,y,v,w \in X_{o}$ are such that $||x|| = ||v||$, $||y|| = ||w||$, and $||x-y|| = ||v-w||$, then $\mathrm{ang}(x,y) = \mathrm{ang}(v,w)$.
\end{lemma}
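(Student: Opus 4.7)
The plan is to exploit Theorem \ref{angleinnerproduct}, which, under the very hypotheses of the lemma, already tells us that $(X,\|\cdot\|)$ is an inner product space. This allows me to import Euclidean geometry (law of cosines, orthogonal splittings, reflections) and work purely inside the two-dimensional subspace $U=\mathrm{span}\{x,y\}$. The case where $x,y$ are linearly dependent is handled at once: equality in the triangle inequality in an IPS forces $v,w$ to be dependent with the same sign, so both sides of the conclusion are simultaneously $0$ or $\pi$ by non-degeneracy and the parallelism property from Lemma \ref{properties}. So I assume $\dim U = 2$.

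Next, I produce a ``test vector'' $y' \in U$ matching the target angle. Since $\mathrm{ang}(x,\cdot)$ is continuous on $X_o$ and the intersection $S(\|y\|)\cap U$ is a topological circle passing through $\|y\|x/\|x\|$ (angle $0$) and $-\|y\|x/\|x\|$ (angle $\pi$), the intermediate value theorem gives some $y' \in S(\|y\|)\cap U$ with $\mathrm{ang}(x,y')=\mathrm{ang}(v,w)$. Property $10$ applied to the pairs $(x,y')$ and $(v,w)$ now yields $\|x-y'\|=\|v-w\|=\|x-y\|$. In the $2$-dimensional IPS $U$, the two equations $\|y'\|=\|y\|$ and $\|x-y'\|=\|x-y\|$ have only two solutions: $y$ itself and the reflection $\tilde y$ of $y$ across the line $\mathbb{R}x$. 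If $y' = y$, then $\mathrm{ang}(x,y)=\mathrm{ang}(x,y')=\mathrm{ang}(v,w)$, and we are done.

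The crux is the remaining case $y'=\tilde y$: we must prove the reflection identity $\mathrm{ang}(x,y)=\mathrm{ang}(x,\tilde y)$, which is not a hypothesis on $\mathrm{ang}$. Fix an orthonormal basis $\{e_1,e_2\}$ of $U$ with $e_1=x/\|x\|$ and parameterize $S(\|y\|)\cap U$ by $z_\theta=\|y\|(\cos\theta\, e_1+\sin\theta\, e_2)$. The law of cosines gives
\begin{equation*}
\|x-z_\theta\|^2=\|x\|^2+\|y\|^2-2\|x\|\|y\|\cos\theta,
\end{equation*}
which is strictly monotonic in $|\theta|$. The map $\theta\mapsto \mathrm{ang}(x,z_\theta)$ is continuous on $[0,\pi]$, equals $0$ at $\theta=0$ and $\pi$ at $\theta=\pi$, and is injective there: two distinct values $\theta_1,\theta_2\in[0,\pi]$ with the same angle would, by property $10$, give $\|x-z_{\theta_1}\|=\|x-z_{\theta_2}\|$, forcing $|\theta_1|=|\theta_2|$, hence $\theta_1=\theta_2$. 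So it is a continuous bijection $[0,\pi]\to[0,\pi]$, and the analogous statement holds on $[-\pi,0]$. Writing $y=z_{\theta_0}$ with $\theta_0\in(0,\pi)$, the unique $\theta_1\in[-\pi,0]$ with $\mathrm{ang}(x,z_{\theta_1})=\mathrm{ang}(x,z_{\theta_0})$ must satisfy $\|x-z_{\theta_1}\|=\|x-z_{\theta_0}\|$ by property $10$, whence $\cos\theta_1=\cos\theta_0$ and thus $\theta_1=-\theta_0$. But $z_{-\theta_0}=\tilde y$, so $\mathrm{ang}(x,\tilde y)=\mathrm{ang}(x,y)$, which closes the argument.

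The main obstacle is exactly this last step. The axioms nowhere postulate invariance of $\mathrm{ang}$ under linear isometries of the ambient (now Euclidean) space, so the reflection symmetry across $\mathbb{R}x$ has to be extracted indirectly, by combining the SAS-style property $10$ with continuity and the strict monotonicity of $\|x-z_\theta\|$ granted by the inner product. Everything else -- reduction to $U$, existence of $y'$, and the two-point characterization of the locus -- is routine once Theorem \ref{angleinnerproduct} is available.
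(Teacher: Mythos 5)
Your proposal is correct, and no essential step is missing: the reduction to the planar case, the IVT selection of $y'$, the two-circle identification $y'\in\{y,\tilde y\}$, and above all the reflection identity $\mathrm{ang}(x,\tilde y)=\mathrm{ang}(x,y)$ (proved via property 10 together with the strict monotonicity of the chord length) all hold; the only points left implicit (e.g. $x\neq y'$, or excluding $\mathrm{ang}(v,w)\in\{0,\pi\}$ when $x,y$ are independent) follow at once from non-degeneracy and the parallelism property of Lemma \ref{properties}. Your route, however, differs from the paper's in its final, decisive step. The paper argues by contradiction: assuming $\mathrm{ang}(x,y)>\mathrm{ang}(v,w)$, it chooses the auxiliary point $p$ with $\|p\|=\|y\|$ and $\mathrm{ang}(x,p)=\mathrm{ang}(v,w)$ \emph{inside} the cone of positive combinations of $x$ and $y$, applies property 10 to get $\|x-p\|=\|x-y\|$ with $p\notin\mathrm{seg}[x,y]$, and then rules out both possible positions of $p$ (vertex of a convex quadrilateral $o,x,y,p$ whose diagonals' lengths sum to that of two opposite sides, or $p$ interior to the triangle $o,x,y$) by a convex-position argument contradicting strict convexity --- a hypothesis not stated in the lemma, which in fact must be imported exactly as you do, from Theorem \ref{angleinnerproduct}. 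You make that dependence explicit, use the full Euclidean structure rather than mere strict convexity, and replace the convexity contradiction by the reflection-invariance argument across $\mathbb{R}x$. What your approach buys is transparency and rigor: the existence of the auxiliary point and the source of strict convexity are spelled out, and the mirror-image case is handled by a clean, self-contained lemma; what the paper's approach buys is brevity and the fact that its geometric contradiction only needs strict convexity of the norm, not the law of cosines. Both share the same two engine parts --- continuity/IVT to realize the prescribed angle on the sphere $S(\|y\|)$, and property 10 to convert equal angles into equal chord lengths.
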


\begin{proof} Assume that the congruence properties hold, and let $x,y,v,w \in X$ be non-zero vectors with $||x|| = ||v||$, $||y|| = ||w||$ and $||x-y|| = ||v-w||$, but suppose that $\mathrm{ang}(x,y) > \mathrm{ang}(v,w)$. We may choose $p \in \wk\mathbf{xoy}$ such that $||p|| = ||y||$ and $\mathrm{ang}(x,p) = \mathrm{ang}(v,w)$. From the congruence property 10 we have $||x-p|| = ||v-w||$, and thus $p \notin \mathrm{seg}[x,y]$. Indeed, $y$ is the only point of $\mathrm{seg}[x,y]$ such that $||x-y|| = ||v-w||$, but clearly we have $p \neq y$. We have two possibilities: first, if the points $o,x,y$ and $p$ are vertices of a convex quadrilateral, then this quadrilateral is such that the sum of the lengths of the diagonals equals the sum of the lengths of two opposite sides, and this contradicts the strict convexity hypothesis (see \cite{martini1}). The other possible configuration is that $p \in \mathrm{int}(\mathrm{conv}(o,x,y))$, and in this case we also obtain a contradiction to the hypothesis of strict convexity. \\
\end{proof}

Now we come to the announced theorem.

\begin{teo}\label{euclideanspace} Let $(X,(\cdot,\cdot))$ be an inner product space. An angle function $\mathrm{ang}:X_{o}\times X_{o}\rightarrow\mathbb{R}$ which satisfies the angle axioms and the congruence properties is necessarily the usual Euclidean angle function. In other words, we have then
\begin{align*}
\mathrm{ang}(x,y) = \mathrm{arccos}\frac{(x,y)}{||x|| \cdot ||y||}
\end{align*}
for every $x,y \in X_{o}$, where $\mathrm{arccos}:[-1,1]\rightarrow [0,\pi]$ is the inverse of the restriction of the standard cosine function to the interval $[0,\pi]$.
\end{teo}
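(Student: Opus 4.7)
The plan is to reduce $\mathrm{ang}$ to a function of a single real variable (the Euclidean angle between the two arguments), and then force linearity of this function via Cauchy's functional equation.

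First, I would invoke Lemma \ref{lemmaequallength}. In the inner product space one has the polarization identity
$$ \|x-y\|^2 = \|x\|^2 + \|y\|^2 - 2(x,y), $$
so whenever $\|x\| = \|v\|$, $\|y\| = \|w\|$ and the Euclidean cosines $(x,y)/(\|x\|\|y\|)$ and $(v,w)/(\|v\|\|w\|)$ coincide, one automatically has $\|x-y\| = \|v-w\|$, and the lemma gives $\mathrm{ang}(x,y) = \mathrm{ang}(v,w)$. Combined with homogeneity (axiom 3), which lets us normalize $x$ and $y$, this shows that $\mathrm{ang}(x,y)$ depends only on the Euclidean angle $\theta = \arccos((x,y)/(\|x\|\|y\|))$. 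Hence there exists $g\colon [0,\pi]\to [0,\pi]$ with $\mathrm{ang}(x,y) = g(\theta)$.

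Next, I would work inside a two-dimensional Euclidean subspace $\Pi \subset X$. Fix unit vectors $u_1,u_3\in\Pi$ at Euclidean angle $\theta\in(0,\pi)$. Any unit vector $u_2\in\Pi$ strictly inside the Euclidean wedge bounded by $u_1$ and $u_3$ can be written as $u_2 = \alpha u_1 + \beta u_3$ with $\alpha,\beta>0$; if $\theta_1$ is the Euclidean angle from $u_1$ to $u_2$ and $\theta_2$ the one from $u_2$ to $u_3$, then $\theta_1+\theta_2=\theta$. Axiom 4 therefore reads
$$ g(\theta_1) + g(\theta_2) = g(\theta_1+\theta_2) $$
for every pair $\theta_1,\theta_2>0$ with $\theta_1+\theta_2<\pi$, which is Cauchy's functional equation on $(0,\pi)$.

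Finally, continuity of $g$ (inherited from axiom 1, since the Euclidean angle depends continuously on the vectors) together with Cauchy's equation forces $g(\theta) = c\theta$ on $(0,\pi)$. Non-degeneracy (axiom 5) gives $g(0)=0$, and property 6 of Lemma \ref{properties} gives $g(\pi)=\pi$, so $c=1$. We conclude $\mathrm{ang}(x,y)=\arccos\!\bigl((x,y)/(\|x\|\|y\|)\bigr)$. The most delicate step I anticipate is the Cauchy argument: one must check that $g$ is genuinely continuous as a function on $[0,\pi]$ (via the continuous dependence of the Euclidean angle on $y\in S$ with $x$ fixed) and that axiom 4 supplies the equation on enough pairs $(\theta_1,\theta_2)$ for the standard continuous-solution argument to apply; everything else is bookkeeping.
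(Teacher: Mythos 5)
Your proof is correct, but it takes a genuinely different route from the paper. You first use Lemma \ref{lemmaequallength} together with the polarization identity and homogeneity to reduce $\mathrm{ang}$ to a one-variable function $g$ of the Euclidean angle, and then turn the additivity axiom into Cauchy's functional equation on $(0,\pi)$, solved by continuity; the normalization $c=1$ then follows from parallelism once you observe (as you do) that $g$ is continuous up to the endpoint $\pi$ (the restricted-domain Cauchy equation is standard: for rational multiples staying below $\pi$ one gets $g(p\theta/q)=(p/q)g(\theta)$, and continuity finishes). The paper instead fixes the Euclidean angle function $\mathrm{ang_e}$ and proves by induction that $\mathrm{ang}(x,y)=\pi/2^{n}$ if and only if $\mathrm{ang_e}(x,y)=\pi/2^{n}$, using congruence property 10 and supplementarity in one direction and Lemma \ref{lemmaequallength} in the other, and then appeals to additivity and continuity of both functions to pass from dyadic angles to all angles. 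Your argument makes that final density step fully explicit and self-contained (everything is packaged into the Cauchy equation), at the cost of the reduction-to-one-variable step and the extension argument for the restricted domain; the paper's bisection argument is more elementary in that it never leaves the two angle functions, but its concluding "same strategy for $n+1$" and the passage from dyadic values to arbitrary ones are left terse. Both proofs lean on the same key lemma (Lemma \ref{lemmaequallength}), so the divergence is in how additivity and continuity are exploited, not in the underlying geometric input.
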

\begin{proof} For simplicity, we introduce $\mathrm{ang_e}(x,y) = \mathrm{arccos}\frac{(x,y)}{||x||\cdot||y||}$. Since both $\mathrm{ang}$ and $\mathrm{ang_e}$ are additive (in the sense of the angle axiom 4) and continuous, it  suffices to prove that $\mathrm{ang}(x,y) = \frac{\pi}{2^n}$ if and only if $\mathrm{ang_e}(x,y) = \frac{\pi}{2^n}$ for every $n \in \mathbb{N}$. We prove this via induction. Let $x,y \in X_{o}$ be such that $\mathrm{ang}(x,y) = \frac{\pi}{2}$. In this case, we clearly have $\mathrm{ang}(-x,y) = \frac{\pi}{2}$, and from the congruence property 10 it follows that $||x-y|| = ||x+y||$. Hence, the triangles $\Delta\mathbf{oxy}$ and $\Delta\mathbf{o(-x)y}$ have sides with correspondingly equal lengths, and it follows that $\mathrm{ang_e}(x,y) = \mathrm{ang_e}(-x,y)$, and thus $\mathrm{ang_e}(x,y) = \frac{\pi}{2}$. Conversely, if $\mathrm{ang_e}(x,y) = \frac{\pi}{2}$, we have immediately $||x+y|| = ||x-y||$, and from Lemma \ref{lemmaequallength} we have $\mathrm{ang}(x,y) = \mathrm{ang}(-x,y)$. This yields $\mathrm{ang}(x,y) = \frac{\pi}{2}$. To prove the validity of the hypothesis for $n+1$ when it holds for $n$, we use exactly the same strategy. This finishes the proof.

\end{proof}

\section{Angles preserving orthogonality types}

We say that an angle function defined in a normed space preserves a certain orthogonality type when it attains the ``Euclidean value" $\frac{\pi}{2}$ only for orthogonal pairs of vectors. Gunawan, Lindiarni and Neswan (cf. \cite{gunawan2008p}) defined two angle functions preserving orthogonality types: the \textit{P-angle}, which preserves Pythagorean orthogonality, and the \textit{I-angle}, which preserves isosceles orthogonality. Th\"{u}rey (cf. \cite{thurey1}) defined the \textit{Thy-angle} preserving Singer orthogonality. For angles preserving Birkhoff orthogonality we have the notion of \textit{q-angle}, defined by Zhi-Zhi, Wei and L\"{u}-Lin (cf. \cite{zhi2011projections}). We suggest and study a new angle type which is a modification of the q-angle, and call it \textit{S-angle}. Mili\v{c}i\v{c} (see below) introduced the \textit{B-angle}, which also preserves Birkhoff orthogonality, and an angle based on the Gateaux derivative of the norm. This section is devoted to investigate these angle functions.

\subsection{The P-angle} \label{pangle}

 Inspired by Phythagorean orthogonality, we define the \textit{P-angle} between two non-zero vectors $x,y \in X$ to be
\begin{align*} \mathrm{ang_p}(x,y) = \mathrm{arccos}\left(\frac{||x||^2+||y||^2-||x-y||^2}{2||x||\cdot||y||}\right) \end{align*}
Notice that this definition is obviously related to the well known cosine law. Namely, we are defining the angle between $x$ and $y$ to be such that $||x-y||^2 = ||x||^2 + ||y||^2 - 2||x||\cdot||y||\cos(\mathrm{ang_p}(x,y))$. Moreover, it is clear that the correctness of this definition is provided by the triangle inequality and the fact that $\mathrm{ang_p}$ is the usual Euclidean angle if the norm is derived from an inner product. \\

In order to be coherent with our treatment of angle functions, we will formally define the P-angle to be the function $\mathrm{ang_p}:X_{o}\times X_{o} \rightarrow \mathbb{R}$ given as above. We will study now some properties of $\mathrm{ang_p}$. First, it is obvious that $\mathrm{ang_p}$ satisfies the structural axioms 1 (continuity) and 2 (symmetry), as well as the positional property 8 (opposite invariance).

\begin{lemma}\label{degeneracypangle} Let $(X,||\cdot||)$ be a normed space with associated P-angle $\mathrm{ang_p}:X_{o}\times X_{o}\rightarrow \mathbb{R}$. Then the following properties are equivalent:\\

\noindent\textbf{(a)} The function $\mathrm{ang_p}$ satisfies the structural axiom 5 (non-degeneracy). \\

\noindent\textbf{(b)} The function $\mathrm{ang_p}$ satisfies the positional property 6 (parallelism).\\

\noindent\textbf{(c)} The space $(X,||\cdot||)$ is strictly convex.\\
\end{lemma}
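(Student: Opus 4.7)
\medskip
\noindent\textbf{Proof plan.} The plan is to translate the vanishing and the maximality of $\mathrm{ang_p}$ directly into equality cases of the triangle inequality, and then to invoke the standard characterization of strict convexity in terms of those equality cases.

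\medskip
First I would unpack the definition. Because $\arccos$ is injective, $\mathrm{ang_p}(x,y)=0$ is equivalent to
\begin{align*}
\|x-y\|^{2}=\|x\|^{2}+\|y\|^{2}-2\|x\|\|y\|=(\|x\|-\|y\|)^{2},
\end{align*}
that is, $\|x-y\|=\bigl|\|x\|-\|y\|\bigr|$, which, given the reverse triangle inequality, is the equality case of the triangle inequality applied to $x=y+(x-y)$ (or to $y=x+(y-x)$, after swapping if $\|y\|>\|x\|$). Similarly, $\mathrm{ang_p}(x,y)=\pi$ is equivalent to $\|x-y\|=\|x\|+\|y\|$, which is the equality case of the triangle inequality applied to $x+(-y)$. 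I would then recall the well-known fact that $(X,\|\cdot\|)$ is strictly convex if and only if whenever $a,b\neq o$ satisfy $\|a+b\|=\|a\|+\|b\|$, the vectors $a$ and $b$ are positive multiples of one another.

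\medskip
Next I would prove the implications \textbf{(c)}$\Rightarrow$\textbf{(a)} and \textbf{(c)}$\Rightarrow$\textbf{(b)}. Assuming strict convexity, if $\mathrm{ang_p}(x,y)=0$ then (WLOG $\|x\|\geq\|y\|$) the identity $\|y+(x-y)\|=\|y\|+\|x-y\|$ forces $y$ and $x-y$ to be nonnegative multiples of one another, which immediately yields $x=\alpha y$ with $\alpha>0$. Likewise, if $\mathrm{ang_p}(x,y)=\pi$, then $\|x+(-y)\|=\|x\|+\|-y\|$ forces $x$ and $-y$ to be positive multiples, so $y=\alpha x$ with $\alpha<0$. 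The converse directions in \textbf{(a)} and \textbf{(b)} are direct computations from the definition and hold in every normed space.

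\medskip
For the remaining implications \textbf{(a)}$\Rightarrow$\textbf{(c)} and \textbf{(b)}$\Rightarrow$\textbf{(c)} I would argue by contrapositive. Assume $(X,\|\cdot\|)$ fails to be strictly convex; pick nonzero $a,b$ which are \emph{not} positive multiples of each other but satisfy $\|a+b\|=\|a\|+\|b\|$. For \textbf{(b)}, set $x=a$ and $y=-b$; then $\|x-y\|=\|a+b\|=\|x\|+\|y\|$, so $\mathrm{ang_p}(x,y)=\pi$ while $y$ is not a negative multiple of $x$, violating parallelism. For \textbf{(a)}, set $x=a+b$ and $y=a$; then $\|x\|-\|y\|=\|b\|=\|x-y\|$, so $\mathrm{ang_p}(x,y)=0$. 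If we had $x=\alpha y$ with $\alpha>0$, we would get $b=(\alpha-1)a$; the cases $\alpha\geq 1$ contradict the non-proportionality of $a$ and $b$ (or give $b=o$), while $0<\alpha<1$ forces $\|a+b\|=\alpha\|a\|$ and $\|a\|+\|b\|=(2-\alpha)\|a\|$, contradicting the equality $\|a+b\|=\|a\|+\|b\|$ unless $\alpha=1$. Hence non-degeneracy also fails.

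\medskip
The only slightly delicate point is the last case analysis showing that a strictly-convex-failure witnessed by $(a,b)$ really propagates to a violation of non-degeneracy for the pair $(a+b,a)$; the corresponding step for parallelism is essentially tautological once one substitutes $b\mapsto -y$. Everything else is a mechanical unraveling of the cosine law combined with the equality-case characterization of strict convexity.
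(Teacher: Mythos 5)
Your proof is correct and follows essentially the same route as the paper: both reduce $\mathrm{ang_p}(x,y)=0$ and $\mathrm{ang_p}(x,y)=\pi$ to the equality cases $\|x-y\|=\bigl|\|x\|-\|y\|\bigr|$ and $\|x-y\|=\|x\|+\|y\|$ of the triangle inequality and then invoke the equality-case characterization of strict convexity. You merely spell out the contrapositive witnesses that the paper dismisses with ``the desired equivalences follow immediately.''
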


\begin{proof} It is clear that $\mathrm{ang_p}(x,y) = 0$ if and only if
\begin{align*} \frac{||x||^2+||y||^2-||x-y||^2}{2||x||\cdot||y||} = 1,
\end{align*}
but this is equivalent to $\left|||x||-||y||\right| = ||x-y||$. Also, $\mathrm{ang_p}(x,y) = 0$ if and only if $||x-y|| = ||x|| + ||y||$. The desired equivalences follow immediately.

\end{proof}

\begin{remark}\label{remarkpangle} Notice that even if $(X,||\cdot||)$ is not strictly convex, we still have $\mathrm{ang_p}(x,y) = 0$ and $\mathrm{ang_p}(x,y) = \pi$ when $y = \alpha x$ for $\alpha > 0$ and $\alpha < 0$, respectively.
\end{remark}

\begin{lemma}\label{pangleprop} For $x,y \in X_o$ we have $x \dashv_P y$ if and only if $\mathrm{ang_p}(x,y) = \frac{\pi}{2}$.
\end{lemma}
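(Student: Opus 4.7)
The statement is essentially a direct unfolding of the definitions, so the plan is very short.

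The approach is to use the fact that $\arccos : [-1,1] \to [0,\pi]$ is a bijection with $\arccos(0) = \pi/2$, together with the fact that $x, y \in X_o$ ensures $\|x\|, \|y\| \neq 0$.

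First I would observe that, by definition of $\mathrm{ang_p}$, the equality $\mathrm{ang_p}(x,y) = \pi/2$ is equivalent to
\begin{align*}
\frac{\|x\|^2 + \|y\|^2 - \|x-y\|^2}{2\|x\|\cdot\|y\|} = \cos\!\left(\tfrac{\pi}{2}\right) = 0.
\end{align*}
Since $x, y \in X_o$, the denominator $2\|x\|\cdot\|y\|$ is strictly positive, so this is in turn equivalent to the vanishing of the numerator, i.e.\ $\|x\|^2 + \|y\|^2 = \|x-y\|^2$. By the definition of Pythagorean orthogonality given in the Introduction, this is precisely $x \dashv_P y$, and both implications follow simultaneously.

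There is no real obstacle here: the argument is a one-line chain of equivalences, and the only thing to check is that the fraction inside the $\arccos$ lies in $[-1,1]$ so that $\mathrm{ang_p}$ is well defined — but this was already noted (as a consequence of the triangle inequality) in the paragraph introducing the P-angle. Hence no auxiliary lemma is needed, and the lemma is established.
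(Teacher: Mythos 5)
Your argument is correct and is exactly the routine unfolding of definitions that the paper has in mind when it states ``The proof is straightforward'': injectivity of $\mathrm{arccos}$ on $[-1,1]$, positivity of the denominator for $x,y \in X_o$, and the definition of Pythagorean orthogonality give the chain of equivalences. Nothing is missing, and no different route is taken.
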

\begin{proof}
The proof is straightforward.

\end{proof}

\begin{remark}\normalfont It is not necessarily true that $\mathrm{ang_p}(x,y) + \mathrm{ang_p}(x,-y) = \pi$. For example, if we choose the unit circle of $(X,||\cdot||)$ as an affine regular hexagon and $x$ and $y$ as two of its consecutive vertices, then it is easy to see that $\mathrm{ang_p}(x,y) + \mathrm{ang_p}(x,-y) = \frac{4\pi}{3}$.
\end{remark}

\begin{prop}\label{homogeneitypangle} The P-angle function $\mathrm{ang_p}$ of a normed space $(X,||\cdot||)$ satisfies the structural axiom 3 (homogeneity) if and only if the norm $||\cdot||$ is derived from an inner product.
\end{prop}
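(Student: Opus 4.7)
The forward direction is immediate: if the norm is derived from an inner product $(\cdot,\cdot)$, then a direct expansion gives $\mathrm{ang_p}(x,y)=\arccos\tfrac{(x,y)}{\|x\|\,\|y\|}$, which is clearly invariant under scaling $x$ and $y$ by positive constants.

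For the converse, my plan is to extract the parallelogram law from a single well-chosen instance of the homogeneity axiom and then invoke the Jordan--von Neumann theorem. Specifically, I would apply homogeneity with $\alpha=1$, $\beta=2$, so that $\mathrm{ang_p}(x,y)=\mathrm{ang_p}(x,2y)$. Since the restriction of $\arccos$ to $[-1,1]$ is injective, this is equivalent to the numerical equality
\begin{align*}
\frac{\|x\|^2+\|y\|^2-\|x-y\|^2}{2\|x\|\,\|y\|}=\frac{\|x\|^2+4\|y\|^2-\|x-2y\|^2}{4\|x\|\,\|y\|}.
\end{align*}
Clearing denominators and rearranging gives the identity $\|x\|^2+\|x-2y\|^2=2\|y\|^2+2\|x-y\|^2$, valid for all $x,y\in X_o$.

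The final step is the substitution $u=x-y$, $v=y$, under which $x=u+v$ and $x-2y=u-v$, so the identity becomes $\|u+v\|^2+\|u-v\|^2=2\|u\|^2+2\|v\|^2$. The hypothesis $x\neq 0$, $y\neq 0$ translates to $u+v\neq 0$ and $v\neq 0$, but the parallelogram law holds trivially in the remaining degenerate cases (both sides reduce to $2\|u\|^2$ when $v=0$, and to $4\|u\|^2$ when $u+v=0$), so it holds on all of $X\times X$. The Jordan--von Neumann theorem then yields that $\|\cdot\|$ is induced by an inner product.

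I expect the main potential stumbling block to be psychological rather than technical: one might anticipate needing the full two-parameter freedom in $(\alpha,\beta)$, or iterating homogeneity to build up enough relations. In fact, a single specialization suffices, and once one recognizes that $x$ and $x-2y$ are symmetric about $x-y$, the parallelogram law appears with only routine algebraic manipulation.
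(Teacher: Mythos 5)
Your proof is correct, and it takes a genuinely different route from the paper. The paper argues via orthogonality: it first notes (Lemma \ref{pangleprop}) that $\mathrm{ang_p}(x,y)=\frac{\pi}{2}$ exactly when $x \dashv_P y$, so homogeneity of $\mathrm{ang_p}$ forces Pythagorean orthogonality to be homogeneous, and then it invokes James's theorem that homogeneity of $\dashv_P$ characterizes inner product spaces. You instead bypass orthogonality entirely: from the single specialization $\mathrm{ang_p}(x,2y)=\mathrm{ang_p}(x,y)$, injectivity of $\arccos$ on $[-1,1]$ turns the angle identity into a norm identity, and the substitution $u=x-y$, $v=y$ (together with the routine check of the excluded cases $v=0$ and $u+v=0$) yields the parallelogram law, after which Jordan--von Neumann finishes. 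The paper's proof is a two-line reduction but leans on a deeper external result about homogeneity of Pythagorean orthogonality, and it only exploits the axiom through its $\frac{\pi}{2}$ level set; your argument is elementary and self-contained modulo Jordan--von Neumann, and it actually proves a formally stronger statement, namely that the single relation $\mathrm{ang_p}(x,2y)=\mathrm{ang_p}(x,y)$ for all nonzero $x,y$ already forces the norm to be Euclidean --- neither the full two-parameter homogeneity nor the behaviour of the angle at any value other than the one instance is needed. A further small advantage is that your route sidesteps any question of whether the cited characterization requires homogeneity under negative scalars, which axiom 3 does not directly provide.
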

\begin{proof} In \cite{james1945orthogonality} it was proved that a normed space is an inner product space if and only if we have $x \dashv_P \alpha y$ for every $\alpha \in \mathbb{R}$ whenever $x \dashv_P y$. The proposition follows now with the help of Lemma \ref{pangleprop}.

\end{proof}

\subsection{The I-angle} \label{iangle}

Inspired by this isosceles orthogonality type (and also by the polarization identity for inner product spaces) the I-angle function $\mathrm{ang_i}:X_o\times X_o \rightarrow \mathbb{R}$ is defined by
\begin{align*} \mathrm{ang_i}(x,y) = \mathrm{arccos}\left(\frac{||x+y||^2-||x-y||^2}{4||x||\cdot||y||}\right).
\end{align*}

Again, it follows from the triangle inequality that the definition makes sense, and it is clear that $\mathrm{ang_i}$ is the standard angle if the space is Euclidean. The I-angle shares a lot of properties with the P-angle, as we will see now. First, one can readily see that $\mathrm{ang_i}$ also satisfies the structural axioms 1 (continuity), 2 (symmetry), and 8 (opposite invariance). It is also immediate to check that the I-angle preserves isosceles orthogonality in the sense that two vectors $x,y \in X_o$ are isosceles orthogonal if and only if $\mathrm{ang_i}(x,y) = \frac{\pi}{2}$. We prove now the analogue of Lemma \ref{degeneracypangle} for the notion of I-angle.

\begin{lemma}\label{degeneracyiangle} Let $(X,||\cdot||)$ be a normed space, and let $\mathrm{ang_i}:X_o\times X_o \rightarrow \mathbb{R}$ be its I-angle function. Then the following properties are equivalent: \\

\noindent\normalfont\textbf{(a)}  \textit{The function $\mathrm{ang_i}$ satisfies the structural axiom 5 (non-degeneracy).}\\

\noindent\textbf{(b)}  \textit{The function $\mathrm{ang_i}$ satisfies the positional property 6 (parallelism).}\\

\noindent\textbf{(c)} \textit{The space $(X,||\cdot||)$ is strictly convex.}\\
\end{lemma}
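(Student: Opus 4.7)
The plan is to establish the three-way equivalence by proving $(c)\Rightarrow(a)$, $(c)\Rightarrow(b)$, $\neg(c)\Rightarrow\neg(a)$, and $\neg(c)\Rightarrow\neg(b)$. The preparatory step is to turn the $\arccos$ conditions into clean metric equalities. Since the triangle inequality gives $\|x+y\|\leq\|x\|+\|y\|$ and its reverse form gives $\|x-y\|\geq\bigl|\,\|x\|-\|y\|\,\bigr|$, squaring and subtracting yields
\begin{align*}
\|x+y\|^2-\|x-y\|^2 \;\leq\; (\|x\|+\|y\|)^2-(\|x\|-\|y\|)^2 \;=\; 4\|x\|\cdot\|y\|,
\end{align*}
and the equality case forces \emph{both} of the individual inequalities to be sharp. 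Thus $\mathrm{ang_i}(x,y)=0$ iff $\|x+y\|=\|x\|+\|y\|$ and $\|x-y\|=\bigl|\|x\|-\|y\|\bigr|$; replacing $y$ by $-y$ shows analogously that $\mathrm{ang_i}(x,y)=\pi$ iff $\|x-y\|=\|x\|+\|y\|$ and $\|x+y\|=\bigl|\|x\|-\|y\|\bigr|$.

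The implications $(c)\Rightarrow(a)$ and $(c)\Rightarrow(b)$ are then easy, mirroring the proof of Lemma \ref{degeneracypangle}. In a strictly convex space, $\|u+v\|=\|u\|+\|v\|$ with $u,v\in X_o$ forces $v=\alpha u$ for some $\alpha>0$; applying this to the first equality in the characterization of (a) immediately gives non-degeneracy, and applying it to $\|x+(-y)\|=\|x\|+\|-y\|$ (rewritten from the characterization of (b)) gives parallelism.

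For the converse directions I would argue by explicit construction. If $(X,\|\cdot\|)$ is not strictly convex, then there exist $a\neq b$ in $S$ with $\|(a+b)/2\|=1$; a standard triangle-inequality trick shows that the whole chord $[a,b]$ lies in $S$. Writing $c=(a+b)/2$ and $u=(b-a)/2$, one has $c+tu\in S$ for every $t\in[-1,1]$, and $u$ cannot be a scalar multiple of $c$ (otherwise $|1+t\mu|$ would be constantly $1$). Setting $x=c$ and $y=2c+u$, a direct computation using $3c+u=3(c+\tfrac{1}{3}u)$ and $x-y=-(c+u)$ gives $\|x+y\|=3=\|x\|+\|y\|$ and $\|x-y\|=1=\bigl|\|x\|-\|y\|\bigr|$, hence $\mathrm{ang_i}(x,y)=0$ while $y-2x=u\neq o$ makes $x,y$ linearly independent; this refutes (a). The symmetric choice $y=-2c-u$ refutes (b) in the same way.

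The main obstacle I foresee is precisely this counter-example: vanishing of $\mathrm{ang_i}$ requires the \emph{two} extremal conditions on $\|x+y\|$ and $\|x-y\|$ to hold simultaneously, so one must position both $x+y$ and $x-y$ on rescaled copies of the flat face of $S$. Achieving the two extremal equalities at once is exactly what forces the somewhat non-obvious choice $\|y\|=2\|x\|$ together with the translation by $u$; without that doubling of length, only one of the two conditions would be achievable.
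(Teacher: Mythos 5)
Your proof is correct, and its core is the same as the paper's: squeeze the quantity $||x+y||^2-||x-y||^2$ against $4||x||\cdot||y||$ via the triangle inequality and analyze the equality case. The difference is in how the equality case is packaged. The paper derives from $\mathrm{ang_i}(x,y)=0$ only the single condition $||x-y||=\bigl|\,||x||-||y||\,\bigr|$, i.e.\ exactly the condition appearing in Lemma \ref{degeneracypangle}, and then reuses that lemma's equivalence with strict convexity; you instead keep both equalities $||x+y||=||x||+||y||$ and $||x-y||=\bigl|\,||x||-||y||\,\bigr|$ and, for the non-strictly-convex direction, produce an explicit witness pair $x=c$, $y=2c+u$ on a flat segment of the unit sphere. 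That explicit construction is a welcome addition, since the paper dismisses this direction with ``the desired follows''. One small inaccuracy in your closing commentary: the two equalities are not independent constraints. If $||x-y||=||x||-||y||$ (say $||x||\geq||y||$), then writing $u=x-y$, $v=y$ one has $||u+v||=||u||+||v||$, and equality in the triangle inequality propagates along the cone spanned by $u,v$ (e.g.\ $||u+2v||\geq 2||u+v||-||u||=||u||+2||v||$), which already gives $||x+y||=||x||+||y||$; so the single condition is equivalent to $\mathrm{ang_i}(x,y)=0$, and the ratio $||y||=2||x||$ in your example is convenient but not forced (any unequal-norm pair along a flat face works, only equal norms are excluded). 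This does not affect the validity of your argument, which is complete as written.
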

\begin{proof} If $\mathrm{ang_i}(x,y) = 0$, then we must have $||x+y||^2 - ||x-y||^2 = 4||x||\cdot||y||$. From the triangle inequality it follows that $(||x||+||y||)^2 - ||x-y||^2 \geq 4||x||\cdot||y||$, which yields $|||x||-||y||| \geq ||x-y||$. Since the opposite inequality holds, we have $|||x||-||y||| = ||x-y||$, and the desired follows. For $\mathrm{ang_i}(x,y) = \pi$ we can use a similar calculation.

\end{proof}

\begin{remark}\label{remarkiangle} As for the P-angle, we have that $\mathrm{ang_i}(x,\alpha x) = 0$ if $\alpha > 0$ and $\mathrm{ang_i}(x, \alpha x) = \pi$ whenever $\alpha < 0$.
\end{remark}

\begin{prop}\label{homogeneityiangle} Let $(X,||\cdot||)$ be a normed space, and let $\mathrm{ang_i}:X_o\times X_o \rightarrow \mathbb{R}$ be the associated I-angle function. Then $\mathrm{ang_i}$ is homogeneous if and only if the norm is Euclidean.
\end{prop}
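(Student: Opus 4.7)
The strategy parallels exactly the proof of Proposition \ref{homogeneitypangle} for the P-angle: reduce the homogeneity property of $\mathrm{ang_i}$ to a known characterization of inner product spaces via homogeneity of isosceles orthogonality. The ``if'' direction is trivial, because in an inner product space $\mathrm{ang_i}$ coincides with the standard Euclidean angle (the expression inside $\arccos$ is simply the polarization identity divided by $4\|x\|\|y\|$), and the Euclidean angle is clearly invariant under positive scaling of each argument.

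For the nontrivial direction, suppose $\mathrm{ang_i}$ satisfies homogeneity. The authors have already observed that $\mathrm{ang_i}(x,y) = \pi/2$ if and only if $x \dashv_I y$. So pick any $x, y \in X_o$ with $x \dashv_I y$; then $\mathrm{ang_i}(x,y) = \pi/2$, and homogeneity yields $\mathrm{ang_i}(x,\alpha y) = \pi/2$ for every $\alpha > 0$, which in turn means $x \dashv_I \alpha y$. The case $\alpha < 0$ follows directly from the definition of isosceles orthogonality, since $\|x + (-y)\| = \|x - y\| = \|x + y\| = \|x - (-y)\|$ automatically; and $\alpha = 0$ is trivial. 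Combining these, $x \dashv_I y$ implies $x \dashv_I \alpha y$ for all $\alpha \in \mathbb{R}$, i.e.\ isosceles orthogonality is homogeneous in $(X,\|\cdot\|)$.

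At this point I invoke the classical theorem of James (from the same 1945 paper already cited in Proposition \ref{homogeneitypangle}): a real normed space is an inner product space if and only if isosceles orthogonality is homogeneous. Applying this yields that $\|\cdot\|$ is derived from an inner product, finishing the proof.

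The main (and essentially only) obstacle is making sure the reduction to James's characterization is legitimate, that is, verifying that $\mathrm{ang_i}$ truly preserves isosceles orthogonality exactly at $\pi/2$ (immediate from the defining formula) and that the scalar $\alpha < 0$ case is handled by the built-in sign symmetry of $\dashv_I$; everything else is an invocation of a classical result, exactly in the spirit of the P-angle case.
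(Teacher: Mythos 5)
Your proof is correct and follows the same route as the paper: since $\mathrm{ang_i}(x,y)=\frac{\pi}{2}$ exactly when $x \dashv_I y$, homogeneity of $\mathrm{ang_i}$ forces isosceles orthogonality to be homogeneous, and James's characterization from \cite{james1945orthogonality} finishes the argument. The only difference is that you spell out the trivial scalar cases ($\alpha\le 0$) and the converse direction, which the paper leaves implicit.
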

\begin{proof} It is known that isosceles orthogonality in a normed space is homogeneous if and only if the norm is derived from an inner product (see \cite{james1945orthogonality}). Since the I-angle preserves isosceles orthogonality, we are finished.

\end{proof}

We emphasized the similarities between the P-angle and the I-angle. However, there is also a difference that is worth to be pointed out, namely: while this does not happen to the P-angle, the I-angle of any normed space has the positional property 7 (supplementarity). Indeed,
\begin{align*} \mathrm{ang_i}(x,-y) = \mathrm{arccos}\left(-\frac{||x+y||^2-||x-y||^2}{4||x||\cdot||y||}\right) = \\ = \pi - \mathrm{arccos}\left(\frac{||x+y||^2-||x-y||^2}{4||x||\cdot||y||}\right) = \pi - \mathrm{ang_i}(x,y).
\end{align*}

\subsection{The Thy-angle} \label{thyangle}

In \cite{thurey1} an angle is defined, which the author calls the \textit{Thy-angle}, and which preserves Singer orthogonality. This is the angle function $\mathrm{ang_{thy}}:X_o\times X_o \rightarrow \mathbb{R}$ given by
\begin{align*} \mathrm{ang_{thy}}(x,y) = \mathrm{arccos}\left(\frac{1}{4}\left(\left|\left|\frac{x}{||x||}+\frac{y}{||y||}\right|\right|^2-\left|\left|\frac{x}{||x||}-\frac{y}{||y||}\right|\right|^2\right)\right)\,,
\end{align*}
which was also studied in \cite{milicic2011thy}. We will enunciate and prove now some of its properties. These results can already be found in the mentioned papers.\\

\begin{lemma}\label{propertiesthyangle} For the Thy-angle the structural axioms 1 (continuity), 2 (symmetry), 3 (homogeneity) and 5 (non-degeneracy) hold, as well as any of the positional properties. Also, two non-zero vectors $x,y \in X_o$ are Singer orthogonal if and only if $\mathrm{ang_{thy}}(x,y) = \frac{\pi}{2}$.
\end{lemma}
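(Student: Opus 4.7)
The plan is to reduce everything to the scalar function $T(x,y) := \tfrac{1}{4}\bigl(\|u+v\|^2 - \|u-v\|^2\bigr)$, where $u = x/\|x\|$ and $v = y/\|y\|$ lie on the unit sphere, so that $\mathrm{ang_{thy}}(x,y) = \arccos T(x,y)$. First I would observe that the triangle inequalities $0 \leq \|u \pm v\| \leq 2$ force $T(x,y) \in [-1,1]$, which simultaneously makes $\arccos T$ well defined and pins down when $T$ attains its extremes.

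Continuity (Axiom 1) will then follow from continuity of the normalizing map $x \mapsto x/\|x\|$ on $X_o$ combined with continuity of the norm and of $\arccos$; surjectivity of $y \mapsto \mathrm{ang_{thy}}(x,y)$ onto $[0,\pi]$ I would get from the intermediate value theorem applied along any continuous path in $X_o$ from $y=x$ (where $T=1$) to $y=-x$ (where $T=-1$). Symmetry (Axiom 2) is immediate from $\|u \pm v\| = \|v \pm u\|$, and homogeneity (Axiom 3) is automatic because, for positive $\alpha$, $\alpha x/\|\alpha x\| = u$, so $T$ depends only on the normalized vectors.

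For non-degeneracy (Axiom 5) and parallelism (Positional 6), the key observation is that $T(x,y) = 1$ can only occur when \emph{both} triangle-inequality bounds $\|u+v\| \leq 2$ and $\|u-v\| \geq 0$ are tight simultaneously, i.e.\ $u=v$, which returns $y = (\|y\|/\|x\|)x$; the symmetric argument at $T = -1$ gives $u = -v$ and hence $y = -(\|y\|/\|x\|)x$. This is the most interesting point of the argument, since---in contrast to Lemmas \ref{degeneracypangle} and \ref{degeneracyiangle} for the P- and I-angles---no strict-convexity hypothesis is required, which is exactly what makes the Thy-angle unusually well behaved. Opposite invariance (Positional 8) and supplementarity (Positional 7) are both read off from how $T$ transforms under sign changes: negating both vectors leaves $T$ invariant, whereas negating only one sends $T$ to $-T$, and $\arccos(-s) = \pi - \arccos(s)$ together with symmetry yields $\mathrm{ang_{thy}}(x,y) + \mathrm{ang_{thy}}(y,-x) = \pi$.

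Finally, Singer orthogonality corresponds precisely to the level set $T(x,y) = 0$, since $T(x,y) = 0$ unwinds to $\|u+v\| = \|u-v\|$---exactly the defining condition of $x \dashv_S y$, modulo the normalization already built into the definition of the Thy-angle. The main obstacle here is therefore one of care rather than depth: one must recognize that equality $T = \pm 1$ forces $u = \pm v$ from the triangle inequality alone, without any further convexity assumption on the norm.
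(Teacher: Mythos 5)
Your proposal is correct. The paper gives no argument here at all --- it declares the proof simple and defers to the cited references of Th\"urey and Mili\v{c}i\v{c} --- and your reduction to the scalar function $T(x,y)=\tfrac14\bigl(\|u+v\|^2-\|u-v\|^2\bigr)$ with $u=x/\|x\|$, $v=y/\|y\|$ is exactly the routine verification being omitted: symmetry, homogeneity, continuity, opposite invariance, the sign change $T(x,-y)=-T(x,y)$ giving supplementarity, and $T=0$ matching Singer orthogonality all read off immediately. You also correctly isolate and settle the only point of any substance, namely that $T(x,y)=\pm 1$ forces \emph{both} extremal cases at once, in particular $\|u\mp v\|=0$ and hence $u=\pm v$, so non-degeneracy and parallelism hold in every normed space with no strict-convexity hypothesis --- in deliberate contrast to Lemmas \ref{degeneracypangle} and \ref{degeneracyiangle} for the P- and I-angles.
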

 The proof of this lemma is simple, and we will omit it. All the axioms and properties are enunciated in \cite{thurey1}, and they also appear in \cite{milicic2011thy}. The fact that Thy-angle preserves Singer orthogonality was also noticed by Mili\v{c}i\v{c} (see \cite{milicic2011thy}).

\begin{remark}  The Thy-angle has a property which is a little bit stronger than homogeneity. Namely, we have that $\mathrm{ang_{thy}}(\alpha x,\beta y) = \mathrm{sgn}(\alpha\beta)\mathrm{ang_{thy}}(x,y)$ for every $\alpha,\beta \neq 0$.
\end{remark}

Notice also that the Thy-angle clearly coincides with the standard angle function in Euclidean space. We now prove a result from \cite{thurey1}, but based on a more general hypothesis (Th\"{u}rey proves it for the Thy-angle). The reader should be aware of the fact that this theorem justifies the existence of a sort of polar coordinates in an arbitrary normed plane. This is formally explained by Th\"{u}rey \cite[Corollary 1]{thurey1}.\\

\begin{teo}\label{thureytheorem} Let $\mathrm{ang}:X_o\times X_o \rightarrow \mathbb{R}$ be an angle function for which the structural axioms 1 (continuity), 3 (homogeneity), and 5 (non-degeneracy) as well as the positional property 6 (parallelism) hold. Then, if $x,y \in X_o$ are linearly independent vectors and the function $f:t \mapsto \mathrm{ang}(y,x+ty)$ is injective,  $f$ is a decreasing homeomorphism from $\mathbb{R}$ to $(0,\pi)$.
\end{teo}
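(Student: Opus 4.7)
The plan is to show that $f$ is continuous on $\mathbb{R}$, takes values in the open interval $(0,\pi)$, and satisfies $\lim_{t\to+\infty}f(t)=0$ and $\lim_{t\to-\infty}f(t)=\pi$. Once these are established, injectivity of $f$ together with the intermediate value theorem will force $f$ to be a strictly decreasing continuous bijection onto $(0,\pi)$, hence a homeomorphism.

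First I would check the elementary facts. Since $x,y$ are linearly independent, $x+ty\neq o$ for every $t\in\mathbb{R}$, so $f$ is defined on all of $\mathbb{R}$. Continuity of $f$ is immediate from axiom 1 applied in the second slot, since $t\mapsto x+ty$ is continuous into $X_o$. The image of $f$ lies in $(0,\pi)$: if $f(t_0)=0$, axiom 5 forces $y=\alpha(x+t_0y)$ for some $\alpha>0$, contradicting independence; if $f(t_0)=\pi$, property 6 yields the analogous contradiction.

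Next I would compute the boundary limits using a rescaling trick. For $t>0$, homogeneity (axiom 3) gives
\[ f(t)=\mathrm{ang}(y,x+ty)=\mathrm{ang}\bigl(y,\tfrac{1}{t}x+y\bigr), \]
and as $t\to+\infty$, axiom 1 together with axiom 5 yield $f(t)\to\mathrm{ang}(y,y)=0$. Analogously, for $t<0$, setting $s=-t>0$, axiom 3 gives $f(-s)=\mathrm{ang}\bigl(y,\tfrac{1}{s}x-y\bigr)$, and as $s\to+\infty$, continuity together with parallelism (property 6) give $f(-s)\to\mathrm{ang}(y,-y)=\pi$.

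With these four ingredients (continuity, image in $(0,\pi)$, the two limits, and injectivity) the conclusion follows from a standard real-analytic fact: any continuous injection $\mathbb{R}\to\mathbb{R}$ is strictly monotonic, the prescribed limits pin down the decreasing case, and the intermediate value theorem combined with the limits yields surjectivity onto $(0,\pi)$; a strictly monotonic continuous bijection between open intervals is automatically a homeomorphism. The only delicate point is the application of homogeneity: the rescaling factor must be positive, which is why the argument naturally splits into the cases $t\to+\infty$ and $t\to-\infty$ and why property 6 is required only in the second limit. The rest of the proof is bookkeeping.
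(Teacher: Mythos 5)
Your proof is correct and follows essentially the same route as the paper's: continuity of $f$ from axiom 1, homogeneity used to rescale the second argument so that the limits $f(t)\to 0$ as $t\to+\infty$ and $f(t)\to\pi$ as $t\to-\infty$ follow from non-degeneracy and parallelism, and then standard real analysis (a continuous injection is strictly monotone, etc.). The only cosmetic difference is that the paper normalizes to $\frac{x+ty}{\|x+ty\|}$ while you divide by $|t|$, and you spell out the ``standard analysis'' step and the fact that $f$ omits the values $0$ and $\pi$ more explicitly.
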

\begin{proof} The continuity of $f$ comes from the continuity of $\mathrm{ang}(\cdot,\cdot)$. Also, from the homogeneity we can conclude $f(t) = \mathrm{ang}\left(y,\frac{x+ty}{||x+ty||}\right)$, and thus non-degeneracy and parallelism yield $\lim_{t \rightarrow \infty}f(t) = 0$ and $\lim_{t \rightarrow -\infty}f(t) = \pi$, respectively. Indeed, it is easy to see that $\frac{x+ty}{||x+ty||}$ converges to $y$ and $-y$ when $t$ goes to $\infty$ and $-\infty$, respectively. The rest follows from standard analysis results.

\end{proof}

\begin{remark} Notice that if we drop the injectiveness hypothesis, then we still have that $f$ is a non-increasing surjective function from $\mathbb{R}$ onto $(0,\pi)$.
\end{remark}

\begin{coro}\label{thyanglefunction} Let $(X,||\cdot||)$ be a strictly convex normed space with associated Thy-angle $\mathrm{ang_{thy}}:X_o\times X_o \rightarrow \mathbb{R}$. Then, if $x,y \in X_o$ are linearly independent, the function $f:\mathbb{R}\rightarrow (0,\pi)$ given by $f(t)=\mathrm{ang_{thy}}(y,x+ty)$ is a decreasing homeomorphism.
\end{coro}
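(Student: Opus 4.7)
The plan is to apply Theorem \ref{thureytheorem} to the Thy-angle. By Lemma \ref{propertiesthyangle}, the Thy-angle of an arbitrary normed space already satisfies the structural axioms 1 (continuity), 3 (homogeneity), and 5 (non-degeneracy), together with the positional property 6 (parallelism)---precisely the hypotheses Theorem \ref{thureytheorem} imposes on the angle function itself. Consequently, the entire content of the corollary reduces to verifying, under the extra hypothesis of strict convexity, that the function $f(t) = \mathrm{ang_{thy}}(y, x+ty)$ is injective. Once this is done, Theorem \ref{thureytheorem} automatically upgrades $f$ to a decreasing homeomorphism from $\mathbb{R}$ onto $(0,\pi)$.

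For injectivity, I first pass to the two-dimensional subspace $V = \mathrm{span}(x,y)$, which inherits strict convexity from $X$ and on which the Thy-angle is computed intrinsically. Homogeneity lets me write $f(t) = \mathrm{ang_{thy}}(y,u(t))$ with $u(t) = (x+ty)/||x+ty||$, and linear independence of $x,y$ makes $t \mapsto u(t)$ a continuous injection from $\mathbb{R}$ onto the open arc $A \subset S_V$ joining $-y' := -y/||y||$ and $y'$ (approached as $t \to \mp\infty$), and passing through $x/||x||$ at $t=0$; moreover, $u(t_1) = -u(t_2)$ is impossible, so $A$ lies strictly on one side of the line through $\pm y'$. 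Since
\begin{align*}
  \cos f(t) = \frac{1}{4}\left(||y' + u(t)||^2 - ||y' - u(t)||^2\right),
\end{align*}
injectivity of $f$ becomes the statement that $\Phi(u) := ||y'+u||^2 - ||y'-u||^2$ is injective on $A$.

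The main obstacle is establishing this injectivity of $\Phi$ in a strictly convex plane. My plan is to prove the sharper assertion that $\Phi$ is strictly monotonic along $A$, by first showing that each of the two continuous functions $u \mapsto ||y'+u||$ and $u \mapsto ||y'-u||$ is strictly monotonic (in opposite senses) along $A$---respectively running from $0$ to $2$ and from $2$ to $0$ as $u$ sweeps from $-y'$ to $y'$. The key input is the strict form of the triangle inequality available in a strictly convex space: if two distinct points $u_1, u_2 \in A$ satisfied $||y'+u_1|| = ||y'+u_2|| = c$, they would both lie in $S_V \cap (-y' + cS_V)$, and a midpoint/averaging argument applied to $w = \tfrac{1}{2}(u_1+u_2)$, combined with the fact that in strictly convex spaces equality in the triangle inequality forces positive proportionality, should produce a contradiction with $u_1, u_2$ both being interior to a single half-arc of $S_V$. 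Once $\Phi$ is strictly monotonic along $A$, the function $f$ is strictly decreasing (hence injective), and Theorem \ref{thureytheorem} then finishes the proof.
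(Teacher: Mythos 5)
Your reduction is the same as the paper's: the Thy-angle satisfies axioms 1, 3, 5 and property 6 (Lemma \ref{propertiesthyangle}), so by Theorem \ref{thureytheorem} everything comes down to injectivity of $f$, which you correctly translate into injectivity of $\Phi(u)=\|y'+u\|^2-\|y'-u\|^2$ as $u$ runs over the open half-arc $A$ from $-y'$ to $y'$. The paper does exactly this, reducing to injectivity of $z\mapsto\|y+z\|-\|y-z\|$ on the half-circle. The problem is your key step. The statement you want (each of $u\mapsto\|y'+u\|$ and $u\mapsto\|y'-u\|$ is strictly monotone along $A$ when the plane is strictly convex) is true, but the midpoint/averaging sketch does not prove it. If $\|y'+u_1\|=\|y'+u_2\|=c$ with $u_1\neq u_2$ in $A$, then for $w=\tfrac12(u_1+u_2)$ strict convexity gives $\|w\|<1$ and $\|y'+w\|<c$ — but that only says the midpoint of a chord lies strictly inside both strictly convex curves $S_V$ and $-y'+cS_V$, which is automatic and contradicts nothing. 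The intuition that the two points of $S_V\cap(-y'+cS_V)$ must lie on opposite sides of the line through $\pm y'$ is the Euclidean picture, where it follows from reflection symmetry across the line of centers; a general strictly convex norm has no such symmetry, so "both in one half-arc" cannot be refuted by this kind of local convexity estimate alone.

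What is genuinely needed is a monotonicity input along the arc. The standard route (and the paper's): by the Monotonicity Lemma (cited from \cite{martini1}), chord lengths from $-y'$ are weakly monotone as $u$ traverses the half-arc, so equal values at $u_1\neq u_2$ force constancy on the whole sub-arc between them; the equality case of that lemma then shows this forces a nondegenerate segment on the unit circle, contradicting strict convexity. The paper runs this argument directly for the difference $\|y+z\|-\|y-z\|$, using weak monotonicity of both chord-length functions and then ruling out the equality case via segments on $S$. So your plan can be repaired by invoking the Monotonicity Lemma together with its equality characterization (or by supplying an honest argument that a nondegenerate arc of $S_V$ cannot lie on $-y'+cS_V$), but as written the crucial strict-monotonicity claim is asserted rather than proved, and the averaging argument offered in its place does not close the gap.
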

\begin{proof} We just have to prove that $f$ is strictly decreasing. But this is clearly the same as proving that the function $z \mapsto ||y+z|| - ||y-z||$ is injective when $z$ ranges on the unit circle from $y$ to $-y$. The chief ingredient is the Monotonicity Lemma (see \cite{martini1}).  Let $x,z \in S$ be such that $x$ lies between $z$ and $y$. Hence, using the mentioned lemma, we compare the lengths of the chords $\mathrm{seg}[y,x]$ and $\mathrm{seg}[y,z]$ (both with initial point $y$), and $\mathrm{seg}[-y,z]$ and $\mathrm{seg}[-y,x]$ (starting at $-y$) to obtain $||y-z|| \geq ||y-x||$ and $||y+x|| \leq ||y+z||$. It follows that $||y+z|| - ||y-z|| \leq ||y+x|| - ||y-x||$. Moreover, equality holds if and only if $||y+z|| = ||y+x||$ and $||y - z|| = ||y - x||$. But this implies that $\mathrm{seg}\left[\frac{z-y}{||z-y||},x\right] \subseteq S$ and $\mathrm{seg}\left[\frac{x+y}{||x+y||},z\right]\subseteq S$, respectively. It is easy to see that this contradicts the convexity of the unit ball, and the proof is finished.

\end{proof}

\subsection{The q-angle and the S-angle}

Throughout this subsection, $(X,||\cdot||)$ is a normed plane. Only at the end we will extend the defined angles to spaces of higher dimensions.\\

The q-angle function $\mathrm{ang_q}:X_o\times X_o \rightarrow \mathbb{R}$ was defined in \cite{zhi2011projections} as follows: We first define a functional $q:X_o\times X_o \rightarrow \mathbb{R}$ by
\begin{align*}  q(x,y) = \left\{\begin{array}{ll} 0, \ \mathrm{if} \ x \ \mathrm{and} \ y \ \mathrm{are \ linearly \ dependent} \\ ||\mathrm{proj}_{x,y}||_{L(X)}^{-1}, \ \mathrm{if} \ x \ \mathrm{and} \ y \ \mathrm{are \ linearly \ independent}\end{array}\right., \end{align*}
where $\mathrm{proj}_{x,y}: X\rightarrow X$ is the projection over the line $\left<-x,x\right>$ taken in the direction of $y$; and $||\cdot||_{L(X)}$ is the standard norm in the space of linear transformations $T:X \rightarrow X$ given by
\begin{align*} ||T||_{L(X)} = \sup_{x\neq 0}\frac{||T(x)||}{||x||}. \end{align*}
It is clear that $0 \leq q(x,y) \leq 1$, with equality on the left side if and only if $x$ and $y$ are dependent, and equality on the right side if and only if $x \dashv_B y$. We now define the q-angle to be
\begin{align*} \mathrm{ang_q}(x,y) = \mathrm{arcsin}(q(x,y)), \end{align*}
where $\mathrm{arcsin} = \sin^{-1}:[0,1] \rightarrow \left[0,\frac{\pi}{2}\right]$. The q-angle has properties of continuity, parallelism and homogeneity type, and it preserves Birkhoff orthogonality (cf. \cite{zhi2011projections}, Theorems 2.1 and 2.2). The problem with this angle concept is that it does not make distinction between ``acute" and ``obtuse" angles, and for this reason we propose a different angle function based on the q-angle which we call the S-angle. First we notice that the functional $q$ is precisely the sine function $s:X_o\times X_o \rightarrow \mathbb{R}$ defined by
\begin{align*} s(x,y) = \frac{\inf_{t\in\mathbb{R}}||x+ty||}{||x||}, \end{align*}
which was studied in \cite{szostok} and \cite{bmt}. After having shown this, we can use all the ``machinery" from these references.

\begin{lemma}\label{qsine} In any normed plane the functional $q$ is precisely the sine function defined above.
\end{lemma}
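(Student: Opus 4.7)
The plan is to dispatch the linearly dependent case separately, and in the independent case to compute $\|\mathrm{proj}_{x,y}\|_{L(X)}$ explicitly by writing every vector in the basis $\{x,y\}$ and reducing the operator-norm supremum to a one-parameter infimum in $\|x+ty\|$.

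First, I would verify the degenerate case. If $x$ and $y$ are linearly dependent, then $q(x,y)=0$ by definition, and there exists $t\in\mathbb{R}$ with $x+ty=0$ (since $y\ne o$), so $s(x,y)=0$ as well. Both sides agree trivially.

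For the main case, assume $x,y$ are linearly independent. Since $X$ is a plane, $\{x,y\}$ is a basis, so each $v\in X$ has a unique expansion $v=\alpha x+\beta y$, and by the very definition of the projection onto $\langle -x,x\rangle$ along $y$, one has $\mathrm{proj}_{x,y}(v)=\alpha x$. I would then write
\begin{align*}
\|\mathrm{proj}_{x,y}\|_{L(X)}=\sup_{v\ne o}\frac{\|\alpha x\|}{\|\alpha x+\beta y\|}=\sup_{\alpha\ne 0}\frac{|\alpha|\,\|x\|}{\|\alpha x+\beta y\|},
\end{align*}
where the restriction to $\alpha\ne 0$ is harmless since $\alpha=0$ contributes $0$ to the supremum. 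Factoring $|\alpha|$ out of the denominator and setting $t=\beta/\alpha$, which ranges freely over $\mathbb{R}$, yields
\begin{align*}
\|\mathrm{proj}_{x,y}\|_{L(X)}=\sup_{t\in\mathbb{R}}\frac{\|x\|}{\|x+ty\|}=\frac{\|x\|}{\inf_{t\in\mathbb{R}}\|x+ty\|}.
\end{align*}
The infimum in the last denominator is strictly positive, because linear independence forces $x+ty\ne o$ for every $t$ and $\|x+ty\|\to\infty$ as $|t|\to\infty$, so the continuous function $t\mapsto\|x+ty\|$ attains a positive minimum. Taking reciprocals gives $q(x,y)=\|\mathrm{proj}_{x,y}\|_{L(X)}^{-1}=s(x,y)$, completing the proof.

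No genuine obstacle is expected here; the only technical point is the justification that $\inf_{t\in\mathbb{R}}\|x+ty\|>0$ so that the reciprocal is legitimate, which follows from coercivity of $t\mapsto\|x+ty\|$ together with independence of $x$ and $y$.
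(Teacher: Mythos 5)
Your proof is correct, but it follows a different route than the paper's. You compute $\|\mathrm{proj}_{x,y}\|_{L(X)}$ directly: expanding $v=\alpha x+\beta y$, reducing the operator-norm supremum to $\sup_{t}\|x\|/\|x+ty\|=\|x\|/\inf_{t}\|x+ty\|$, and noting coercivity of $t\mapsto\|x+ty\|$ to justify taking reciprocals; this is self-contained, needs no reduction to unit vectors, and uses nothing beyond the definitions. The paper instead first reduces by homogeneity to $x,y\in S$, then identifies the operator norm geometrically: it is attained at a unit vector $z=\alpha x+\beta y$ with $z\dashv_B y$ (the unit circle lies in the strip bounded by the lines through $\pm z$ in direction $y$, so $\|\mathrm{proj}_{x,y}\|_{L(X)}=|\alpha|$), and then invokes the formula $s(x,y)=|[x,y]|/(\|y\|_a\|x\|)$ from the cited reference to get $s(x,y)=|[x,y]|/|[z,y]|=1/|\alpha|$. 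What your argument buys is elementarity and independence from the antinorm/determinant-form machinery; what the paper's argument buys is the explicit link between the projection norm, Birkhoff orthogonality, and the antinorm, which is the toolkit it reuses in the surrounding sections (e.g., in defining the S-angle via the map $b(y)$). Both are complete; your only technical obligation, the positivity of $\inf_t\|x+ty\|$ under linear independence, is handled correctly.
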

\begin{proof} It is clear that both functionals are homogeneous. Thus, proving that $q(x,y) = s(x,y)$ whenever $x,y \in S$ is enough. It is also  immediate that the two functionals coincide for dependent vectors; hence we  assume that $x,y$ are linearly independent. The norm $||\mathrm{proj}_{x,y}||_{L(X)}$ is attained for any $z \in S$ such that $z \dashv_B y$. Indeed, if such a point is written as $z = \alpha x + \beta y$, then the unit circle lies within the (closed) strip determined by the lines $t \mapsto \alpha x + ty$ and $t \mapsto -\alpha x + ty$, and it follows that $||\mathrm{proj}_{x,y}(z)|| = |\alpha|$. On the other hand, using the formula $s(x,y) = \frac{|[x,y]|}{||y||_a||x||}$ (proved in \cite{bmt}), we have
\begin{align*} s(x,y) = \frac{|[x,y]|}{||y||_a} = \frac{|[x,y]|}{|[z,y]|} = \frac{1}{|\alpha|},\end{align*}
and the proof is finished.

\end{proof}

From now on we assume that the normed plane $(X,||\cdot||)$ is strictly convex. Having this hypothesis, we can have uniqueness for Birkhoff orthogonality on the left, i.e., for each given direction $y$ there exists precisely one direction $x$ such that $x \dashv_B y$. In this case we can define a map $b:X_o \rightarrow S$ which associates each $y \in X_o$ to the unique unit vector $b(y)$ satisfying $b(y) \dashv_B y$ and $[y,b(y)] > 0$. Now we define the S-angle $\mathrm{ang_s}:X_o\times X_o \rightarrow \mathbb{R}$ to be
\begin{align*}
\mathrm{ang_s}(x,y) = \left\{\begin{array}{ll} \mathrm{arcsin}(s(x,y)), \ \mathrm{if} \ [x,b(y)] \geq 0 \\ \pi - \mathrm{arcsin}(s(x,y)) \ \mathrm{otherwise} \end{array}\right..\end{align*}

It is easy to see that the S-angle satisfies the structural axioms 3 (homogeneity) and 5 (non-degeneracy) as well as the positional properties 6 (parallelism) and 8 (opposite invariance). Also, it is clear that the S-angle preserves Birkhoff orthogonality. Notice that continuity is not straightforward, and this we prove now.

\begin{lemma} For the $S$-angle the structural axiom 1 (continuity) holds.
\end{lemma}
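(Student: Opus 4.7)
The plan is to establish joint continuity of $\mathrm{ang_s}$ on $X_o\times X_o$ (from which the continuity of $y\mapsto\mathrm{ang_s}(x,y)$ and $y\mapsto\mathrm{ang_s}(y,x)$ demanded by axiom~1 both follow). I would split the task into three essentially independent pieces: continuity of the sine functional $s$, continuity of the map $b$ (and hence of $(x,y)\mapsto[x,b(y)]$), and a glueing argument across the locus where the case distinction in the definition occurs. The surjectivity clause of axiom~1 then drops out of the intermediate value theorem together with $\mathrm{ang_s}(x,x)=0$ and $\mathrm{ang_s}(x,-x)=\pi$, so I would not belabour it.

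For the continuity of $s$, I would exploit that for fixed $(x,y)\in X_o\times X_o$ the function $t\mapsto\|x+ty\|$ is continuous and coercive via $\|x+ty\|\ge|t|\|y\|-\|x\|$, so its infimum is attained on a bounded interval whose size can be controlled uniformly under small perturbations of $(x,y)$. If $(x_n,y_n)\to(x,y)$, then $\|x_n+\cdot\,y_n\|$ converges to $\|x+\cdot\,y\|$ uniformly on compact sets, and combining this with a subsequence argument for the bounded sequence of minimisers yields $\inf_t\|x_n+ty_n\|\to\inf_t\|x+ty\|$. Dividing by $\|x_n\|\to\|x\|>0$ gives the continuity of $s$ on $X_o\times X_o$.

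For the continuity of $b$, I would use the standing strict convexity hypothesis, which makes Birkhoff orthogonality left-unique on $S$ up to sign and hence singles out $b(y)$ by the condition $[y,b(y)]>0$. Given $y_n\to y$ in $X_o$, the compactness of $S$ provides cluster points $z$ of $\{b(y_n)\}$; passing the inequality $\|b(y_n)+ty_n\|\ge 1$ to the limit yields $z\dashv_B y$, and $[y_n,b(y_n)]>0$ yields $[y,z]\ge 0$. Since a nonzero vector is never Birkhoff orthogonal to a parallel vector (take $t=-1$ in $\|v+tv\|\ge\|v\|$), $y$ and $z$ must be linearly independent, so by left-uniqueness and $[y,z]\ge 0$ one has $z=b(y)$; hence $b(y_n)\to b(y)$. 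Bilinearity of $[\cdot,\cdot]$ then delivers the continuity of $(x,y)\mapsto[x,b(y)]$.

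The main obstacle is the glueing across the set $\{(x,y):[x,b(y)]=0\}$, where the two branches of the definition meet. I would observe that at such a point $(x_0,y_0)$ the vector $x_0$ is a scalar multiple of $b(y_0)$, so $x_0\dashv_B y_0$, which forces $s(x_0,y_0)=1$. By the first step, $s(x_n,y_n)\to 1$ for any sequence $(x_n,y_n)\to(x_0,y_0)$; consequently both $\arcsin(s(x_n,y_n))$ and $\pi-\arcsin(s(x_n,y_n))$ tend to $\pi/2=\mathrm{ang_s}(x_0,y_0)$, regardless of which branch each term $(x_n,y_n)$ falls into. This removes the apparent discontinuity of the branch function and completes the argument.
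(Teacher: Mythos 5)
Your proof is correct, and while it follows the same basic decomposition as the paper (pairs with $[x,b(y)]\neq 0$ handled via continuity of the sine function, Birkhoff-orthogonal pairs treated separately), it is substantially more complete. The paper merely cites the continuity of $s$ from the literature and disposes of the orthogonal case by restricting to $S\times S$ via homogeneity and ``considering the lateral limits over the unit circle''; it never discusses the map $b$. You instead prove continuity of $s$ from scratch (coercivity of $t\mapsto\|x+ty\|$ plus locally uniform convergence and a bounded-minimiser argument), and -- this is the genuinely additional ingredient -- you establish continuity of $b$ using compactness of $S$, the left-uniqueness of Birkhoff orthogonality coming from strict convexity, and the normalisation $[y,b(y)]>0$. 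That step is in fact needed even for the ``easy'' case: to know that $\mathrm{ang_s}$ is given by a single branch in a neighbourhood of a pair with $[x_0,b(y_0)]\neq 0$, one needs $(x,y)\mapsto[x,b(y)]$ to be continuous, a point the paper glosses over. Your treatment of the branch boundary is also cleaner: since $[x_0,b(y_0)]=0$ in a plane forces $x_0$ to be a multiple of $b(y_0)$, hence $x_0\dashv_B y_0$ and $s(x_0,y_0)=1$, both expressions $\arcsin(s(x_n,y_n))$ and $\pi-\arcsin(s(x_n,y_n))$ converge to $\pi/2$ along any approaching sequence, which is a joint-continuity version of the paper's one-variable lateral-limit argument. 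The only dependence worth flagging is that your compactness argument for $b$ uses two-dimensionality (compactness of $S$), which is harmless here since the whole subsection is set in a strictly convex normed plane; your brief remark that surjectivity follows from the intermediate value theorem and the values at $\pm x$ matches the paper, which likewise proves only continuity.
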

\begin{proof} It is clear that $\mathrm{ang_s}$ is continuous for any pair $x,y \in X_o$ such that $[x,b(y)] \neq 0$ (this follows from the continuity of the sine function proved in \cite{bmt}). Hence, we just have to consider a pair $x,y$ with $x \dashv_B y$ and prove that $\mathrm{ang_s}$ is continuous in both entries. But for this sake we just have to restrict $\mathrm{ang_s}$ to $S\times S$ (homogeneity guarantees that this can be done) and consider the lateral limits over the unit circle.

\end{proof}

We come now to characterizations given by the S-angle. More precisely, we will characterize Radon and Euclidean planes by properties of the S-angle.

\begin{prop} The S-angle of a strictly convex normed plane $(X,||\cdot||)$ is symmetric if and only if the norm is Radon.
\end{prop}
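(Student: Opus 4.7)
I would prove the two implications separately. The backward direction is nearly immediate from the orthogonality-preservation of the S-angle; the forward direction splits into symmetry of the sine functional $s$ (easy) and symmetry of the branch selector (the main obstacle).

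For the backward direction, note that by construction $\mathrm{ang_s}(x,y)=\pi/2$ iff $s(x,y)=1$, since the two branches of the definition coincide at $\pi/2$, and the text already records that $s(x,y)=1$ iff $x\dashv_B y$. Hence if $\mathrm{ang_s}$ is symmetric then $x\dashv_B y \Leftrightarrow \mathrm{ang_s}(x,y)=\pi/2 \Leftrightarrow \mathrm{ang_s}(y,x)=\pi/2 \Leftrightarrow y\dashv_B x$, which is exactly the Radon property.

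For the forward direction, assume $X$ is Radon, so that (with the standard normalization) $\|\cdot\|_a=\|\cdot\|$. Applying the antinorm formula $s(x,y)=|[x,y]|/(\|x\|\,\|y\|_a)$ from \cite{bmt} (used inside the proof of Lemma \ref{qsine}), Radonness immediately gives $s(x,y)=|[x,y]|/(\|x\|\,\|y\|)=s(y,x)$, and hence $\mathrm{arcsin}(s(x,y))=\mathrm{arcsin}(s(y,x))$.

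The crux is then to verify that the branch selector is symmetric: $[x,b(y)]\geq 0$ iff $[y,b(x)]\geq 0$. For this I would use that in a strictly convex Radon plane, the map $b:S\to S$ is a fixed-point-free self-homeomorphism of the unit circle satisfying $b\circ b=-\mathrm{id}$. (Well-definedness and continuity come from strict convexity; the identity $b(b(y))=-y$ follows from Radon symmetry of $\dashv_B$ together with the sign normalization $[y,b(y)]>0$; and an orientation-reversing self-homeomorphism of $S^1$ necessarily has fixed points, so $b$ must be orientation-preserving.) Lifting $b$ to a monotone $f:\mathbb{R}\to\mathbb{R}$ relative to a parametrization of $S$ compatible with the orientation of $[\cdot,\cdot]$, one obtains the relations $f(\theta+\pi)=f(\theta)+\pi$ and $f\circ f=\mathrm{id}+\pi$, from which $f^{-1}=f-\pi$. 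A short computation then shows that both inequalities $[x,b(y)]>0$ and $[y,b(x)]>0$ are equivalent, in a suitable fundamental domain for $\theta_y-\theta_x$, to the single condition $f(\theta_x)>\theta_y$, and hence to each other. The degenerate case $s(x,y)=1$ is automatic since both branches then give $\pi/2$. Combining the two steps yields $\mathrm{ang_s}(x,y)=\mathrm{ang_s}(y,x)$ for all $x,y\in X_o$.
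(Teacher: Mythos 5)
Your proposal is correct, and it is substantially more detailed than the paper's own proof, which consists of a single sentence: it cites \cite{bmt} for the fact that the sine function $s$ is symmetric only in Radon planes and declares that ``the result follows.'' Your argument differs in two useful ways. First, for the implication ``$\mathrm{ang_s}$ symmetric $\Rightarrow$ Radon'' you bypass symmetry of $s$ altogether and argue directly from orthogonality preservation ($\mathrm{ang_s}(x,y)=\pi/2 \Leftrightarrow s(x,y)=1 \Leftrightarrow x\dashv_B y$, using Lemma \ref{qsine}), which is cleaner, since symmetry of the two-branch function $\mathrm{ang_s}$ does not formally hand you symmetry of $s$ without some branch bookkeeping. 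Second, and more importantly, for ``Radon $\Rightarrow$ $\mathrm{ang_s}$ symmetric'' you explicitly verify that the branch selector $[x,b(y)]\geq 0$ is itself symmetric; the paper's proof leaves this step implicit, even though symmetry of $s$ alone only gives agreement of the values $\arcsin(s(x,y))$ and $\arcsin(s(y,x))$, not of the choice between $\arcsin$ and $\pi-\arcsin$. Your route through the properties of $b$ (fixed-point free, odd, continuous, $b\circ b=-\mathrm{id}$ in a strictly convex Radon plane) and the lift $f$ with $f(\theta)-\theta\in(0,\pi)$, $f(\theta+\pi)=f(\theta)+\pi$, $f\circ f=\mathrm{id}+\pi$, hence $f^{-1}=f-\pi$, does close this gap: the ``short computation'' you defer indeed works out, since monotonicity of $f$ turns both sign conditions into $\theta_y<f(\theta_x)$ (respectively $f(\theta_x)<\theta_y-\pi$ in the other half of the fundamental domain), and the boundary case $[x,b(y)]=0$ is exactly $x\dashv_B y$, where both branches give $\pi/2$. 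In short: same statement, but your proof supplies the branch-symmetry argument that the paper's citation-style proof glosses over, at the cost of the elementary but somewhat lengthy covering-space computation.
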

\begin{proof} It is known (see \cite{bmt}) that the sine function is  symmetric only in Radon planes. The result follows.

\end{proof}

\begin{prop} The S-angle of a normed plane satisfies the structural axiom 4 (additivity) if and only if the norm is Euclidean.
\end{prop}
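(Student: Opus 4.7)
The ``if'' direction is immediate: in a Euclidean plane the $S$-angle coincides with the usual Euclidean angle, which is additive. For the converse, suppose $\mathrm{ang_s}$ is additive. The plan is to proceed in three stages: (i) build an ``arc length'' parametrization of the unit circle in which $\mathrm{ang_s}$ is the length difference, (ii) use it to deduce that the plane is Radon, and then (iii) combine Radon with the sine formula $\sin\mathrm{ang_s}(x,y) = |[x,y]|/(||x||\cdot||y||_a)$ (from Lemma \ref{qsine}) to force the unit circle to be a Euclidean ellipse.

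For stage (i), the $S$-angle is already continuous, homogeneous, non-degenerate, and satisfies parallelism; additivity together with non-degeneracy forces $f(t) = \mathrm{ang_s}(y, x + ty)$ to be strictly monotone for any linearly independent $x,y$, so Theorem \ref{thureytheorem} yields a decreasing homeomorphism onto $(0,\pi)$ on each open half-circle. Gluing the two halves via supplementarity (which is available by Lemma \ref{properties}) produces a continuous surjection $\psi:[0,2\pi]\to S$ with $\psi(0)=\psi(2\pi)=u$, $\psi(\pi)=-u$, $\psi(s+\pi)=-\psi(s)$, and (by iterated additivity) $\mathrm{ang_s}(\psi(s_1),\psi(s_2)) = |s_1-s_2|$ for all $s_1,s_2\in[0,\pi]$. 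For stage (ii), since the $S$-angle preserves Birkhoff orthogonality, $\mathrm{ang_s}(\psi(s),\psi(s+\pi/2)) = \pi/2$ forces $\psi(s)\dashv_B\psi(s+\pi/2)$, and the orientation convention of the map $b$ gives $\psi(s+\pi/2)=b(\psi(s))$. Iterating, $b(b(\psi(s))) = \psi(s+\pi) = -\psi(s)$; as $\psi$ is surjective this is exactly the Radon condition, so by the preceding proposition $\mathrm{ang_s}$ is symmetric and, after rescaling the determinant form, $||\cdot||_a = ||\cdot||$.

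For stage (iii), choose coordinates in the basis $(u, b(u))$ so that $\psi(0)=(1,0)$, $\psi(\pi/2)=(0,1)$, and $[\cdot,\cdot]$ is the standard determinant, and write $\psi(s) = (p(s), q(s))$. The sine formula applied with $s_1=0$ yields $|q(s)|=\sin s$, and with $s_1=\pi/2$ (using symmetry) yields $|p(s)|=|\cos s|$. Continuity and the boundary values $p(0)=1$, $p(\pi)=-1$, $q(0)=q(\pi)=0$ force $p(s)=\cos s$ and $q(s)=\sin s$ on $[0,\pi]$, and then $\psi(s+\pi)=-\psi(s)$ extends this to $[0,2\pi]$. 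Hence the unit circle is the standard Euclidean circle in this basis, and $||\cdot||$ is Euclidean.

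The main difficulty is in stage (i): one must carefully verify that additivity (together with the other established axioms) yields a genuine global arc-length parametrization of the unit circle, in particular that the local homeomorphisms supplied by Theorem \ref{thureytheorem} combine into a consistent $\psi$ on all of $S$ and that iterated additivity legitimately delivers $\mathrm{ang_s}(\psi(s_1),\psi(s_2)) = |s_1 - s_2|$ throughout $[0,\pi]$. Once $\psi$ is in place, stages (ii) and (iii) are essentially computational.
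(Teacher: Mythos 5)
Your strategy works in outline, but it is a genuinely different and much heavier route than the paper's. The paper's proof is short: for unit vectors $x\dashv_B y$, supplementarity (derived from additivity and continuity, as in the proof of property 7) gives $\pi=\mathrm{ang_s}(x,y)+\mathrm{ang_s}(y,-x)=\frac{\pi}{2}+\mathrm{ang_s}(y,-x)$, so $y\dashv_B x$ and the plane is Radon; then, since the sine function takes equal values in the Busemann bisector directions $x+y$ and $x-y$, additivity together with the symmetry just gained forces $\mathrm{ang_s}(x,x+y)=\mathrm{ang_s}(x,x-y)=\frac{\pi}{4}$, hence $||x+y||=||x-y||$. Thus Birkhoff orthogonality implies isosceles orthogonality, which already characterizes inner product spaces --- no global parametrization and no explicit computation of the unit circle are needed. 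Your stages (ii)--(iii) instead exhibit the unit circle as a Euclidean circle in the basis $(u,b(u))$ via the formula of Lemma \ref{qsine}; this is more constructive (it identifies the circle explicitly) but re-proves a classical characterization by hand. Your stage (i), which you flag as the main difficulty, does in fact go through: strict monotonicity of $t\mapsto\mathrm{ang_s}(y,x+ty)$ follows from additivity plus non-degeneracy exactly as you say, Theorem \ref{thureytheorem} applies, and for $0<s_1<s_2<\pi$ the vector $\psi(s_1)$ is a positive combination of $u$ and $\psi(s_2)$, so additivity yields $\mathrm{ang_s}(\psi(s_1),\psi(s_2))=s_2-s_1$ (the endpoint cases being handled by supplementarity and continuity).

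Two steps need repair, though neither is fatal. First, you may not quote Lemma \ref{properties} for supplementarity: its hypotheses include axiom 2 (symmetry), and for the S-angle symmetry is precisely the Radon property you only establish afterwards. Instead, repeat the proof of property 7 given there, which uses only additivity, continuity, and $\mathrm{ang_s}(x,-x)=\pi$ (parallelism), all of which are available. Second, the orientation claim $\psi(s+\pi/2)=b(\psi(s))$ is backwards: $\mathrm{ang_s}(\psi(s),\psi(s+\pi/2))=\frac{\pi}{2}$ gives $\psi(s)\dashv_B\psi(s+\pi/2)$, i.e.\ $\psi(s)=\pm b(\psi(s+\pi/2))$, and uniqueness of Birkhoff orthogonality on the right (which your identity would require) is a smoothness statement not yet available. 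The Radon conclusion survives with a corrected argument: applying the same identity at $s+\pi/2$ gives $\psi(s+\pi/2)\dashv_B\psi(s+\pi)=-\psi(s)$, so Birkhoff orthogonality is symmetric on every pair $(\psi(s),\psi(s+\pi/2))$, and surjectivity of $\psi$ together with uniqueness on the left (strict convexity) shows every Birkhoff-orthogonal pair arises this way up to signs. With these corrections, and the small bookkeeping in stage (iii) that $\psi(\pi/2)=\pm b(u)$ and $[u,b(u)]=1$ after rescaling the determinant form, your argument is complete.
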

\begin{proof} First we shall check that if the S-angle is additive, then it is symmetric (and consequently the plane is Radon). Indeed, if additivity holds, then we have for $x,y \in S$
with $x \dashv_B y$ that
\begin{align*} \pi = \mathrm{ang_s}(x,-x) = \mathrm{ang_s}(x,y) + \mathrm{ang_s}(y,-x) = \frac{\pi}{2} + \mathrm{ang_s}(y,-x), \end{align*}
from which we get $\mathrm{ang_s}(y,-x) = \frac{\pi}{2}$ or, equivalently, $y \dashv_B x$.\\

Still assuming that $x,y \in S$ are two unit vectors such that $x \dashv_B y$, recall that the Busemann angular bisector of the angle $\wk\mathbf{xoy}$ is the ray with direction $x+y$, and hence Proposition 3.8 in \cite{bmt} states that $s(x,x+y) = s(y,x+y)$. If $\mathrm{ang_s}$ is additive, we have
\begin{align*} \mathrm{ang_s}(x,x+y) + \mathrm{ang_s}(x+y,y) = \mathrm{ang_s}(x,y) = \frac{\pi}{2}. \end{align*}
But since we have symmetry, it follows that $\mathrm{ang_s}(x,x+y) = \frac{\pi}{4}$. Doing the same for the angle $\wk\mathbf{xo(-y)}$ we have also $\mathrm{ang_s}(x,x-y) = \frac{\pi}{4}$. It follows that $||x+y|| = ||x-y||$. Thus, Birkhoff orthogonality implies isosceles orthogonality. This characterizes inner product spaces.

\end{proof}

Of course the q-angle and the S-angle definitions can be extended to normed spaces of higher dimensions, suitably considering the geometries induced in 2-dimensional subspaces spanned by pairs of independent vectors.

\subsection{The B-angle} \label{bangle}

In \cite{milicic2007b} Mili\v{c}i\v{c} defined, for smooth and strictly convex normed spaces, the so-called \textit{oriented B-angle}, which is an angle function preserving Birkhoff orthogonality. We basically follow his construction, but modify it by dropping the smoothness hypothesis and defining the oriented B-angle for any strictly convex normed space (still keeping its ``good" properties). Hence, within this subsection the normed space $(X,||\cdot||)$ is always assumed to be strictly convex.\\

Given $x,y \in X_o$, we assume $t^* = t^*(x,y)$ to be the (unique, by strict convexity) number for which $||x-t^*y|| = \inf_{t\in\mathbb{R}}||x-ty||$, and we study some properties of this functional.

\begin{lemma}[Properties of $t^*$]\label{propertiest*} Let $t^*:X_o\times X_o \rightarrow \mathbb{R}$ be the functional defined as above. We have the following properties: \\
\normalfont

\noindent\textbf{(a)} \textit{We have $t^*(x,\alpha x) = \frac{1}{\alpha}$ for every $\alpha \neq 0$. In particular, $t^*(x,x) = 1$ and $t^*(x,-x) = -1$. Moreover, $t^*(\alpha x,\beta y) = \frac{\alpha}{\beta}t^*(x,y)$ for any $\alpha, \beta \neq 0$}. \\

\noindent\textbf{(b)} \textit{We have $t^*(x,y) = 0$ if and only if $x \dashv_B y$.} \\

\noindent\textbf{(c)} \textit{The functional $t^*$ is continuous.}
\end{lemma}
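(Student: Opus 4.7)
The plan is to dispose of parts (a) and (b) by direct inspection of the defining infimum, and to save the real work for part (c), where the point is to upgrade the uniqueness guaranteed by strict convexity into continuous dependence of the minimizer on the pair $(x,y)$. For (a), with $y=\alpha x$ one has $\|x-t\alpha x\|=|1-t\alpha|\,\|x\|$, which vanishes precisely at $t=1/\alpha$; by uniqueness this forces $t^*(x,\alpha x)=1/\alpha$, and the special values $\pm 1$ follow for $\alpha=\pm 1$. For the scaling identity, the equality $\|\alpha x-t\beta y\|=|\alpha|\,\|x-(t\beta/\alpha)y\|$ shows that $t$ minimizes the left-hand side in $t$ if and only if $t\beta/\alpha$ minimizes $s\mapsto\|x-sy\|$, giving $t^*(\alpha x,\beta y)=(\alpha/\beta)\,t^*(x,y)$. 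For (b), the condition $x\dashv_B y$, rewritten as $\|x+ty\|\geq \|x\|$ for all $t\in\mathbb{R}$, is equivalent (after substituting $t\mapsto -t$) to $\inf_{t\in\mathbb{R}}\|x-ty\|=\|x\|=\|x-0\cdot y\|$, and by strict convexity this is the same as $t^*(x,y)=0$.

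For (c), I would argue by a two-step subsequential-limit procedure. Fix a sequence $(x_n,y_n)\to(x,y)$ in $X_o\times X_o$ and write $t_n:=t^*(x_n,y_n)$. The first step is boundedness of $\{t_n\}$: taking $t=0$ in the defining infimum yields $\|x_n-t_ny_n\|\leq \|x_n\|$, and the reverse triangle inequality then gives $|t_n|\,\|y_n\|\leq 2\|x_n\|$; since $(x_n,y_n)\to(x,y)$ with $y\neq o$, the factor $\|y_n\|$ is bounded away from $0$ while $\|x_n\|$ is bounded above, so $\{t_n\}$ is bounded. The second step is that any convergent subsequence $t_{n_k}\to\tau$ must satisfy $\tau=t^*(x,y)$: passing to the limit in the inequality $\|x_{n_k}-t_{n_k}y_{n_k}\|\leq \|x_{n_k}-sy_{n_k}\|$ (valid for every fixed $s\in\mathbb{R}$) produces $\|x-\tau y\|\leq \|x-sy\|$, so $\tau$ is a minimizer of $s\mapsto\|x-sy\|$, and strict convexity guarantees that this minimizer is unique, forcing $\tau=t^*(x,y)$. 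Since the bounded sequence $\{t_n\}$ has $t^*(x,y)$ as its only subsequential limit, it converges to $t^*(x,y)$, and continuity follows. The only genuine obstacle is this uniqueness step, which is precisely why strict convexity was imposed at the start of the subsection.
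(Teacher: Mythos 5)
Your proposal is correct, and its overall skeleton for part (c) matches the paper's: both establish the bound $|t^*(x,y)| \leq 2\|x\|/\|y\|$ from the triangle inequality to get boundedness of $t^*(x_n,y_n)$, and both conclude by showing every convergent subsequence must tend to $t^*(x,y)$. The one genuine difference is how that subsequential limit is identified. The paper routes this step through the implicit equation $\|x_n - t^*(x_n,y_n)y_n\| = s(x_n,y_n)\,\|x_n\|\cdot\|y_n\|_a$, invoking the continuity of the sine function $s$ proved elsewhere, so that the minimum \emph{value} converges and uniqueness of the minimizer then pins down the limit. You instead pass to the limit directly in the minimizing inequality $\|x_{n_k}-t_{n_k}y_{n_k}\| \leq \|x_{n_k}-sy_{n_k}\|$ against each fixed competitor $s$, concluding that the limit $\tau$ is itself a minimizer and hence equals $t^*(x,y)$ by strict convexity. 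Your version is more elementary and self-contained (no appeal to the sine-function machinery), at the cost of not recording the explicit formula for the minimal value, which the paper reuses conceptually in its broader discussion; the paper's version buys that formula but imports an external continuity result. Your treatment of (a) and (b), which the paper dismisses as immediate, is a correct direct verification; note only that in (a) uniqueness of the minimizer needs no strict convexity at all, since $t \mapsto |1-t\alpha|\,\|x\|$ has a unique zero, and in (b) the implication from $t^*(x,y)=0$ to $x \dashv_B y$ likewise uses only that $0$ is a minimizer.
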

\begin{proof} Assertions \textbf{(a)} and \textbf{(b)} are immediate, so we just have to prove that $t^*$ is continuous. Let $(x,y) \in X_o \times X_o$, and assume that $(x_n,y_n)_{n\in\mathbb{N}}$ is a sequence in $X_o \times X_o$ which converges to $(x,y)$. First, notice that $t^*(x_n,y_n)$ is given implicitly by
\begin{align*} ||x_n - t^*(x_n,y_n)y_n|| = s(x_n,y_n)||x_n||\cdot||y_n||_a, \end{align*}
where $s:X_o \times X_o \rightarrow \mathbb{R}$ is the (continuous) sine function studied in \cite{bmt}. Hence, since the right side of the equality above is continuous in both entries and from the uniqueness of $t^*$, it follows that any converging subsequence of $(t^*(x_n,y_n))_{n\in\mathbb{N}}$ must have $t^*(x,y)$ as limit. Therefore, in order to prove that $t^*$ is continuous, we only have to show that the sequence $(t^*(x_n,y_n))_{n\in\mathbb{N}}$ has some converging subsequence. This is an easy consequence of the inequality $|t^*(x,y)| \leq \frac{2||x||}{||y||}$, which comes directly from the triangle inequality.

\end{proof}

Given $x,y \in X_o$ such that $x$ is not Birkhoff orthogonal to $y$, strict convexity guarantees that there exists a unique non-zero number $t^{**} = t^{**}(x,y)$ for which $||x-t^{**}y|| = ||x||$. Geometrically this means that if $x$ is not Birkhoff orthogonal to $y$, then the line $t \mapsto x - ty$ intersects $S(||x||)$ in precisely two points. One of them is $x$, and by our definition the other one is $x - t^{**}y$. Finally we extend the definition to non-zero Birkhoff orthogonal vectors setting $t^{**}(x,y) = 0$ whenever $x \dashv_B y$.

\begin{lemma}[Properties of $t^{**}$]\label{propertiest**} The functional $t^{**}:X_o\times X_o \rightarrow \mathbb{R}$ has the following properties: \\

\normalfont\noindent\textbf{(a)} \textit{We have $t^{**}(x,\alpha x) = \frac{2}{\alpha}$ whenever $\alpha \neq 0$. Moreover, $t^{**}(\alpha x,\beta y) = \frac{\alpha}{\beta}t^{**}(x,y)$ for every $\alpha, \beta \neq 0$.}\\

\noindent\textbf{(b)}  \textit{We have $t^{**}(x,y) = t^*(x,y)$ if and only if $x \dashv_B y$.}\\

\noindent\textbf{(c)}  \textit{The functional $t^{**}$ is continuous.}

\end{lemma}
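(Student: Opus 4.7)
Parts \textbf{(a)} and \textbf{(b)} are mostly algebraic/geometric unwinding of the definition, so I will take them quickly; the real work is the continuity statement \textbf{(c)}, which mirrors the argument for $t^*$ but needs an extra ingredient.

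For \textbf{(a)}, write $y = \alpha x$ with $\alpha \neq 0$: the equation $\|x - t\alpha x\| = \|x\|$ reduces to $|1 - t\alpha| = 1$, whose nonzero solution is $t = 2/\alpha$, so $t^{**}(x,\alpha x) = 2/\alpha$. For the scaling identity, observe that Birkhoff orthogonality is invariant under nonzero (possibly negative) scaling of either argument, so the case $x \dashv_B y$ is immediate from the convention. Otherwise, $\|\alpha x - t\beta y\| = \|\alpha x\|$ is equivalent, after dividing by $|\alpha|$, to $\|x - (t\beta/\alpha)y\| = \|x\|$, and uniqueness of the nonzero root forces $t\beta/\alpha = t^{**}(x,y)$, giving the formula.

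For \textbf{(b)}, if $x \dashv_B y$ then $t^*(x,y) = 0$ by Lemma \ref{propertiest*}(b) and $t^{**}(x,y) = 0$ by definition, so they agree. Conversely, if $t_0 := t^*(x,y) = t^{**}(x,y)$, then $\|x - t_0 y\| = \|x\|$ is simultaneously the minimum of $t \mapsto \|x - ty\|$ and equals the value at $t=0$; hence $t=0$ also attains the minimum, and uniqueness of $t^*$ forces $t_0 = 0$, i.e. $x \dashv_B y$.

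For \textbf{(c)}, let $(x_n,y_n) \to (x,y)$ in $X_o \times X_o$. The key a priori bound is $|t^{**}(x_n,y_n)| \leq 2\|x_n\|/\|y_n\|$, which follows from the triangle inequality applied to the defining equation $\|x_n - t^{**}(x_n,y_n)\, y_n\| = \|x_n\|$; since $\|y_n\|$ is bounded away from $0$, the sequence $(t^{**}(x_n,y_n))$ is bounded. Pick any convergent subsequence with limit $T$; passing to the limit in the defining equation yields $\|x - Ty\| = \|x\|$. It remains to show $T = t^{**}(x,y)$, and here we split two cases. If $x$ is not Birkhoff orthogonal to $y$, then $t^*(x,y) \neq 0$, and because on each line the convex function $f_n(t) = \|x_n - ty_n\| - \|x_n\|$ satisfies $f_n(0) = 0$ with strict negative minimum at $t^*(x_n,y_n)$, the value $t^{**}(x_n,y_n)$ lies on the same side of $0$ as $t^*(x_n,y_n)$ and in fact $|t^{**}(x_n,y_n)| \geq |t^*(x_n,y_n)|$; continuity of $t^*$ then forces $|T| \geq |t^*(x,y)| > 0$, so $T$ is the unique nonzero root, namely $t^{**}(x,y)$. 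If instead $x \dashv_B y$, I invoke strict convexity of $(X,\|\cdot\|)$ to argue that $t \mapsto \|x - ty\|$ is \emph{strictly} convex on $\mathbb{R}$ (two different values of $t$ producing the same norm would, by linearity of $t \mapsto x - ty$, contradict strict convexity of the unit ball); hence $\|x - Ty\| = \|x\|$ admits only the solution $T=0 = t^{**}(x,y)$. Either way, every subsequential limit equals $t^{**}(x,y)$, so $t^{**}(x_n,y_n) \to t^{**}(x,y)$, proving continuity.

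The main obstacle is the Birkhoff-orthogonal case in \textbf{(c)}: one must rule out the possibility that $t^{**}(x_n,y_n)$ drifts to a nonzero limit satisfying $\|x - Ty\| = \|x\|$, and this is where strict convexity of the norm is essential (in a merely convex space the sphere could contain a nondegenerate segment through $x$ parallel to $y$, producing multiple solutions and breaking continuity).
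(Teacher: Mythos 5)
Your proposal is correct, and for parts \textbf{(a)} and \textbf{(b)} it is essentially the paper's argument (your ``two minimizers'' contradiction in the converse of \textbf{(b)} is the same use of strict convexity as the paper's observation that $\|x-ty\|$ would be constant on $[0,t^{**}]$). For \textbf{(c)} the engine is also the same — the a priori bound $|t^{**}(x,y)|\le 2\|x\|/\|y\|$, extraction of a convergent subsequence, passage to the limit in $\|x_n-t^{**}(x_n,y_n)y_n\|=\|x_n\|$, and uniqueness via strict convexity, split according to whether $x\dashv_B y$ — but your route differs in two useful ways. First, you prove \emph{joint} continuity directly with sequences $(x_n,y_n)\to(x,y)$, whereas the paper only checks continuity separately in each entry; your version is what the lemma literally asserts, so this is a genuine strengthening of the written proof. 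Second, in the non-orthogonal case you pin down that the subsequential limit $T$ is nonzero by combining $\mathrm{sgn}(t^{**})=\mathrm{sgn}(t^{*})$, $|t^{**}|\ge|t^{*}|$ and the continuity of $t^{*}$ from Lemma \ref{propertiest*}, a point the paper only asserts informally (``bounded away from zero''). One wording caution in your orthogonal case: the parenthetical ``two different values of $t$ producing the same norm would contradict strict convexity of the unit ball'' is false as stated — that is exactly how $t^{**}$ is defined when $x$ is not Birkhoff orthogonal to $y$. What rules out $T\neq 0$ here is that under $x\dashv_B y$ the common value $\|x\|$ is the \emph{minimum} of the convex map $t\mapsto\|x-ty\|$, so equality at $0$ and at $T$ would force this map to be constant on $[0,T]$ (equivalently, strict convexity of the map together with $x\dashv_B y$ would give a midpoint value below $\|x\|$, a contradiction). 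Since you are explicitly in the case $x\dashv_B y$, your conclusion stands; only the justification should be phrased this way.
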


\begin{proof} Statement \textbf{(a)} is immediate. For \textbf{(b)} we have to notice that if $||x|| = ||x-t^{**}y|| = ||x-t^*y|| = \inf_{t\in\mathbb{R}}||x-ty||$, then $t^{**} = 0$ or $||x|| = ||x-ty||$ for all $t \in [0,t^{**}]$. The second option would contradict the strict convexity hypothesis if $t^{**} > 0$. \\

We have to invest more into the proof of \textbf{(c)}, although the result seems to be ``intuitively clear''. We prove that $t^{**}$ is continuous in each of its entries. First, we assume that $y \in X_o$ is fixed and let $x \in X_o$ be such that $x$ is not Birkhoff orthogonal to $y$. Let $(x_n)_{n\in\mathbb{N}}$ be a sequence in $X_o$ which converges to $x$. It is easy to see that if $n$ is sufficiently large, we have that $x_n$ is not Birkhoff orthogonal to $y$. Hence we may assume that $t^{**}(x_n,y) \neq 0$ for every $n \in \mathbb{N}$. If the sequence $t^{**}(x_n,y)$ has some subsequence which converges to $t_0 \in \mathbb{R}$, say, then we must have $t_0 \neq 0$ (recall that $x$ is not Birkhoff orthogonal to $y$), and since $||x_n-t^{**}(x_n,y)y|| = ||x_n||$, it follows from the continuity of the norm that $||x - t_0y|| = ||x||$. From the uniqueness we have $t_0 = t^{**}(x,y)$. To show that $(t^{**}(x_n,y))_{n\in\mathbb{N}}$ has some converging subsequence, it is enough to notice that the inequality $|t^{**}(x,y)| \leq \frac{2||x||}{||y||}$ holds. Let now $x,y \in X_o$ be such that $x \dashv_B y$, and let $(x_n)_{n\in\mathbb{N}}$ be a sequence in $X_o$ which converges to $x$. Since $||x_n - t^{**}(x_n,y)y|| = ||x_n||$ for every $n \in \mathbb{N}$, it is clear that any converging subsequence of $t^{**}(x_n,y)$ has to converge to $0$. Otherwise we would have $||x - t_0y|| = ||x||$ for some $t_0 \neq 0$, and this contradicts the hypothesis $x \dashv_B y$. \\

We are going to prove now that $t^{**}$ is continuous in the second entry. First, if $x \dashv_B y$ and $(y_n)_{n\in\mathbb{N}}$ is a sequence converging to $y$, then we have to show that $t^{**}(x,y_n)$ goes to $0$ when $n \rightarrow \infty$. We already know that $t^{**}(x,y_n)$ has a converging subsequence, and then it suffices to show that any of such subsequences must converge to $0$. Indeed, assume that $t_0$ is the limit of a converging subsequence of $t^{**}(x,y_n)$. Since $||x - t^{**}(x,y_n)y_n|| = ||x||$ for every $n \in \mathbb{N}$, it follows that $||x - t_0y|| = ||x||$, and from $x \dashv_B y$ we have $t_0 = 0$. Now, if $x$ is not Birkhoff orthogonal to $y$ and $y_n \rightarrow y$, then it is easy to see that $t^{**}(x,y_n)$ is bounded away from zero for large $n \in \mathbb{N}$. It follows that any subsequence of it converges to some non-zero number $t_0$ such that $||x - t_0y|| = ||x||$. Hence, by the property of strict convexity it follows  that $t_0 = t^{**}(x,y)$.

\end{proof}

These functionals yield the following characterization of inner product spaces.

\begin{prop}\label{ipschart*t**} A strictly convex normed space $(X,||\cdot||)$ is Euclidean if and only if $t^*(x,y) = \frac{t^{**}(x,y)}{2}$ for every $x,y \in X_o$.
\end{prop}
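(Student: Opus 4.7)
The plan is to exploit the geometric meaning of $t^*$ and $t^{**}$: the vector $x - t^{*}(x,y)y$ is the foot of the Birkhoff perpendicular from $o$ to the line $\ell = \{x - ty : t \in \mathbb{R}\}$, while (when $x$ is not Birkhoff orthogonal to $y$) the points $x$ and $x - t^{**}(x,y)y$ are the two intersections of $\ell$ with the sphere $S(||x||)$. Hence the identity $t^{*}(x,y) = t^{**}(x,y)/2$ is precisely the statement that the foot of the Birkhoff perpendicular coincides with the midpoint of the chord cut out by $\ell$ on $S(||x||)$.

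For the necessity direction, I would simply compute in an inner product space: expanding $||x - ty||^{2} = ||x||^{2} - 2t(x,y) + t^{2}||y||^{2}$, minimization in $t$ gives $t^{*}(x,y) = (x,y)/||y||^{2}$, while the equation $||x - ty|| = ||x||$ has roots $t = 0$ and $t = 2(x,y)/||y||^{2}$, so $t^{**}(x,y) = 2t^{*}(x,y)$.

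For the sufficiency direction, the plan is to show that the hypothesis forces isosceles orthogonality to imply Birkhoff orthogonality; a classical James-type characterization (cf.~\cite{alonso}) then ensures that the norm comes from an inner product. Given $p, w \in X_{o}$ with $p \dashv_{I} w$, set $u = p + w$, so that $||u|| = ||p - w||$. One has $u \neq o$, because $u = o$ would force $w = -p$, whence $||p - w|| = 2||p|| \neq 0 = ||u||$, a contradiction. Applying the hypothesis to $x = u$, $y = w$, the equality $||u - 2w|| = ||p - w|| = ||u||$ says that $t \mapsto ||u - tw||$ takes the value $||u||$ at both $t = 0$ and $t = 2$. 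Strict convexity makes this function strictly convex, so these two values cannot both be global minima; hence $u$ is not Birkhoff orthogonal to $w$, and by definition of $t^{**}$ we get $t^{**}(u, w) = 2$. The hypothesis then yields $t^{*}(u, w) = 1$, which means that the infimum of $||u - tw||$ is attained at $t = 1$, where $u - w = p$. By definition of $t^{*}$ this is exactly $p \dashv_{B} w$.

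The main obstacle I anticipate is the sufficiency direction, specifically the little edge-case analysis that $u \neq o$ together with the appeal to strict convexity that forces $t^{**}(u, w) = 2$ rather than $0$; without these steps one cannot identify $u - t^{**}w$ with $p - w$ and the midpoint interpretation breaks down. Once these technicalities are handled, the reduction to ``isosceles implies Birkhoff'' is automatic and the cited characterization finishes the proof.
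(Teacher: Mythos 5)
Your proof is correct and follows essentially the same route as the paper: both hinge on the observation that for isosceles orthogonal $p,w$ one has $t^{**}(p+w,w)=2$, so that the hypothesis $t^{*}=t^{**}/2$ forces $t^{*}(p+w,w)=1$, i.e.\ $p\dashv_B w$, and then the James-type characterization (isosceles orthogonality implies Birkhoff orthogonality only in inner product spaces) finishes the argument; the paper phrases this as a contrapositive while you argue directly, and you are in fact a bit more careful about the edge cases ($u\neq o$ and ruling out $t^{**}(u,w)=0$) that the paper glosses over.
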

\begin{proof} If $(X,||\cdot||)$ is not Euclidean, then there exist unit vectors $x,y \in S$ such that $x$ is isosceles orthogonal, but not Birkhoff orthogonal to $y$. From isosceles orthogonality we have $t^{**}(x+y,y) = 2$. If $t^*(x+y,y) = \frac{t^{**}(x,y)}{2} = 1$, then $||x|| = ||x + y - y|| = \inf_{t\in\mathbb{R}}||x-ty||$, and thus $x \dashv_B y$. This contradicts the hypothesis, and the converse is obvious.

\end{proof}

Using the functionals $t^*$ and $t^{**}$, we define a new functional $\lambda:X_o\times X_o \rightarrow \mathbb{R}$ by \\
\begin{align*} \lambda(x,y) = \min\{|t^*(x,y)|,|t^{**}(x,y)-t^*(x,y)|\},
\end{align*}
and we explore some properties of this functional. Notice that, geometrically, $\lambda$ is the scaled length of the smallest of the segments determined on the chord of $S(||x||)$ through $x$ in direction of $y$ by the ray whose intersection with $S(||x||)$ is a point the direction $y$ of which supports this sphere.

\begin{lemma}\label{propertieslambda} For the functional $\lambda:X_o\times X_o \rightarrow \mathbb{R}$ the following holds: \\

\normalfont\noindent\textbf{(a)}  \textit{The functional $\lambda$ is continuous.} \\

\noindent\textbf{(b)}  \textit{We have $\lambda(\alpha x,\beta y) = \frac{|\alpha|}{|\beta|}\lambda(x,y)$ for all $\alpha,\beta \neq 0$.}\\

\noindent\textbf{(c)} \textit{ We have $\lambda(x,y) \leq \frac{||x||}{||y||}$, and equality holds if and only if $y = \alpha x$ for some $\alpha \neq 0$. }\\

\noindent\textbf{(d)}  \textit{We have $\lambda(x,y) = 0$ if and only if $x \dashv_B y$.}
\end{lemma}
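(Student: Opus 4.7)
The plan is to derive all four statements directly from the properties of $t^*$ and $t^{**}$ established in Lemmas \ref{propertiest*} and \ref{propertiest**}, plus one basic geometric observation: since $t \mapsto \|x - ty\|$ is convex and attains the value $\|x\|$ at both $t = 0$ and $t = t^{**}(x,y)$, its unique minimizer $t^*(x,y)$ must lie in the closed interval joining $0$ and $t^{**}(x,y)$. Consequently $|t^*(x,y)| + |t^{**}(x,y) - t^*(x,y)| = |t^{**}(x,y)|$, which is the key identity behind part \textbf{(c)}.

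Parts \textbf{(a)} and \textbf{(b)} are immediate. For \textbf{(a)}, $\lambda$ is the pointwise minimum of the two continuous functionals $(x,y) \mapsto |t^*(x,y)|$ and $(x,y) \mapsto |t^{**}(x,y) - t^*(x,y)|$, which are continuous by parts \textbf{(c)} of the two preceding lemmas. For \textbf{(b)}, the scaling identities $t^*(\alpha x,\beta y) = (\alpha/\beta) t^*(x,y)$ and $t^{**}(\alpha x,\beta y) = (\alpha/\beta)t^{**}(x,y)$ show that both quantities inside the minimum scale by the common factor $|\alpha|/|\beta|$, which may then be factored out.

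For \textbf{(d)}, if $x \dashv_B y$ then $t^*(x,y) = 0$ by Lemma \ref{propertiest*}\textbf{(b)} and $t^{**}(x,y) = 0$ by definition, so $\lambda(x,y) = 0$. Conversely, $\lambda(x,y) = 0$ means either $t^*(x,y) = 0$ or $t^*(x,y) = t^{**}(x,y)$; the first case gives $x \dashv_B y$ directly by Lemma \ref{propertiest*}\textbf{(b)}, and the second gives the same conclusion by Lemma \ref{propertiest**}\textbf{(b)}.

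The main step is \textbf{(c)}. From the identity $|t^*| + |t^{**} - t^*| = |t^{**}|$ noted in the first paragraph, I obtain $\lambda(x,y) \leq |t^{**}(x,y)|/2$. On the other hand, since $\|x - t^{**}(x,y)y\| = \|x\|$, applying the triangle inequality to $t^{**}(x,y)\,y = (t^{**}(x,y)y - x) + x$ yields $|t^{**}(x,y)|\cdot\|y\| \leq 2\|x\|$, and combining these bounds gives $\lambda(x,y) \leq \|x\|/\|y\|$ (the case $x \dashv_B y$ is trivial since then $\lambda = 0$). For the equality statement, attaining $\lambda(x,y) = \|x\|/\|y\|$ forces equality in the triangle inequality above; by strict convexity this means $t^{**}(x,y)y - x$ is a non-negative multiple of $x$, which in turn forces $y$ to be a non-zero scalar multiple of $x$. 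Conversely, if $y = \alpha x$ with $\alpha \neq 0$, a direct computation yields $t^*(x,y) = 1/\alpha$ and $t^{**}(x,y) = 2/\alpha$, whence $\lambda(x,y) = 1/|\alpha| = \|x\|/\|y\|$. The only delicate point is keeping the equality case of the triangle inequality straight in the strictly convex setting, but this is routine once the geometric picture is in hand.
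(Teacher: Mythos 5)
Your proof is correct and takes essentially the same route as the paper: parts \textbf{(a)}, \textbf{(b)}, \textbf{(d)} follow from the corresponding properties of $t^*$ and $t^{**}$, and part \textbf{(c)} rests on the same two ingredients the paper uses, namely the triangle-inequality bound $|t^{**}(x,y)|\cdot\|y\| \leq 2\|x\|$ and the convexity fact that $t^*(x,y)$ lies between $0$ and $t^{**}(x,y)$ (which the paper phrases as $|t^*|\leq |t^{**}|$ with equal signs). Your packaging of the inequality via $\lambda \leq |t^{**}|/2$, and your explicit equality-in-the-triangle-inequality argument for the equality case, are a mild streamlining of the paper's case analysis (which is terser exactly at the equality step), but the underlying argument is the same.
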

\begin{proof} The assertions \textbf{(a)} and \textbf{(b)} are easy consequences of Lemmas \ref{propertiest*} and \ref{propertiest**}. For \textbf{(c)}, suppose that $|t^*(x,y)| > \frac{||x||}{||y||}$. From the definition of $t^{**}$ and the triangle inequality we have
\begin{align*} |t^{**}(x,y)|\cdot||y|| \leq ||x|| + ||x - t^{**}(x,y)y|| = 2||x||,  \end{align*}
and hence $|t^{**}(x,y)| \leq \frac{2||x||}{||y||}$. Also, from convexity we always have $|t^*(x,y)| \leq |t^{**}(x,y)|$ and $\mathrm{sgn}(t^*(x,y)) = \mathrm{sgn}(t^{**}(x,y))$, where $\mathrm{sgn}$ is the usual sign function. Thus,
\begin{align*} |t^{**}(x,y) - t^*(x,y)| = |t^{**}(x,y)| - |t^*(x,y)| \leq \frac{2||x||}{||y||} - \frac{||x||}{||y||} = \frac{||x||}{||y||}.
\end{align*}
This concludes the proof of the inequality. Now, if $|t^*(x,y)| = \frac{||x||}{||y||}$, then we have $|t^{**}(x,y) - t^*(x,y)| \leq \frac{||x||}{||y||}$, since $|t^{**}(x,y)| \leq 2\frac{||x||}{||y||}$ and $\mathrm{sgn}(t^{**}) = \mathrm{sgn}(t^*)$. On the other hand, if $|t^{**}(x,y) - t^*(x,y)| = \frac{||x||}{||y||}$, then $|t^*(x,y)| \leq \frac{||x||}{||y||}$. Hence, if equality holds in the above inequality, we must have $|t^{**}(x,y)| = 2\frac{||x||}{||y||}$ or $|t^*(x,y)| = \frac{||x||}{||y||}$. In both cases, strict convexity guarantees that $x$ and $y$ are dependent.\\

For the remaining part notice that if $x \dashv_B y = 0$, then we have $t^*(x,y) =  0$. Conversely, if $\min\{|t^*(x,y)|,|t^{**}(x,y)-t^*(x,y)|\} = 0$, then $t^*(x,y) = 0$ or $t^{**}(x,y) = t^*(x,y)$. It is easy to see that both cases imply $x \dashv_B y$.

\end{proof}

Now we are ready do define the oriented B-angle function $\mathrm{ang_b}:X_o\times X_o \rightarrow \mathbb{R}$. It is given by
\begin{align*} \mathrm{ang_b}(x,y) = \mathrm{arccos}\left(\lambda(x,y)\frac{||y||}{||x||}\mathrm{sgn}(t^{**}(x,y))\right). \end{align*}
There are two differences between our definition and  Mili\v{c}i\v{c}'s original one: we changed the order of the entries (for a merely aesthetic reason) and, more important, we replaced $\mathrm{sgn}(g(x,y))$ by $\mathrm{sgn}(t^{**}(x,y))$, where $g$ is a functional that we will study later (notice that we could have used $\mathrm{sgn}(t^*(x,y))$ instead). This allowed us to define the B-angle for non-smooth spaces, but in such a way that our definition agrees with the original one for smooth spaces. (Indeed, in a smooth and strictly convex normed space we  have $\mathrm{sgn}(g(x,y)) = \mathrm{sgn}(t^{**}(x,y))$ for any $x,y \in X_o$.)\\

The correctness of the definition of the oriented B-angle follows from Lemma \ref{propertieslambda} \textbf{(c)}. We study now its properties.

\begin{lemma}\label{propertiesbangle} The B-angle defined above satisfies the structural axioms 1 (continuity), 3 (homogeneity), 5 (non-degeneracy) and the three positional properties (parallelism, supplementarity and opposite invariance).
\end{lemma}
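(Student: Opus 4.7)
The plan is to verify each of the six listed properties directly from the definition
$$\mathrm{ang_b}(x,y) = \arccos\Bigl(\lambda(x,y)\,\tfrac{\|y\|}{\|x\|}\,\mathrm{sgn}(t^{**}(x,y))\Bigr),$$
using the already established behavior of $t^{*}$, $t^{**}$ and $\lambda$ under the three natural transformations $y\mapsto\alpha y$ (positive scaling), $y\mapsto -y$, and $(x,y)\mapsto(-x,-y)$. I would handle continuity first, then homogeneity, then non-degeneracy and parallelism together, and finally supplementarity and opposite invariance together.

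The main obstacle is continuity, because $\mathrm{sgn}(t^{**})$ jumps at those pairs where $t^{**}(x,y)=0$, which by Lemma \ref{propertiest**}(b) are exactly the Birkhoff-orthogonal pairs. Lemma \ref{propertieslambda}(d) states that $\lambda(x,y)=0$ at exactly the same pairs, so the product $\lambda(x,y)\cdot\mathrm{sgn}(t^{**}(x,y))$ is dominated in absolute value by $\lambda(x,y)$, and hence it is continuous by Lemma \ref{propertieslambda}(a). Off of the Birkhoff-orthogonal set, $\mathrm{sgn}\circ t^{**}$ is locally constant and $t^{**}$ is continuous, so continuity there is automatic. Multiplying by the continuous factor $\|y\|/\|x\|$ and composing with $\arccos$ then yields continuity of $\mathrm{ang_b}$ on all of $X_o\times X_o$.

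Homogeneity (axiom 3) is then routine: for $\alpha,\beta>0$, Lemma \ref{propertieslambda}(b) and Lemma \ref{propertiest**}(a) give $\lambda(\alpha x,\beta y)=(\alpha/\beta)\lambda(x,y)$ and $t^{**}(\alpha x,\beta y)=(\alpha/\beta)t^{**}(x,y)$, whose sign is unchanged, while $\|\beta y\|/\|\alpha x\|=(\beta/\alpha)\|y\|/\|x\|$ cancels the factor $\alpha/\beta$, leaving the argument of $\arccos$ unchanged. For non-degeneracy and parallelism, $\mathrm{ang_b}(x,y)\in\{0,\pi\}$ iff the argument has modulus $1$; by Lemma \ref{propertieslambda}(c) this forces $y=\alpha x$ for some $\alpha\neq 0$. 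The sign of the argument then equals $\mathrm{sgn}(t^{**}(x,\alpha x))=\mathrm{sgn}(2/\alpha)=\mathrm{sgn}(\alpha)$, so $\mathrm{ang_b}(x,y)=0$ exactly when $\alpha>0$ and $\pi$ exactly when $\alpha<0$, which gives both properties 5 and 6 at once.

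For supplementarity and opposite invariance I would exploit that replacing $y$ by $-y$ negates both $t^{*}$ and $t^{**}$: since $\|x-t(-y)\|=\|x+ty\|$, the minimizer and the non-zero solution of $\|x-s(-y)\|=\|x\|$ simply flip sign. Consequently $\lambda(x,-y)=\lambda(x,y)$ while $\mathrm{sgn}(t^{**}(x,-y))=-\mathrm{sgn}(t^{**}(x,y))$, and since $\|-y\|=\|y\|$ the $\arccos$-argument in $\mathrm{ang_b}(x,-y)$ is the negative of the one in $\mathrm{ang_b}(x,y)$, giving $\mathrm{ang_b}(x,-y)=\pi-\mathrm{ang_b}(x,y)$. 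An entirely parallel computation with $x\mapsto -x$ yields $\mathrm{ang_b}(-x,y)=\pi-\mathrm{ang_b}(x,y)$, and combining these two identities delivers $\mathrm{ang_b}(-x,-y)=\mathrm{ang_b}(x,y)$, which is opposite invariance.
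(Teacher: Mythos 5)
Your proof is correct and follows essentially the same route as the paper: each property is read off from the already established properties of $t^{*}$, $t^{**}$ and $\lambda$, with continuity handled at Birkhoff-orthogonal pairs via the vanishing of $\lambda$ there (your squeeze formulation $|\lambda\cdot\mathrm{sgn}(t^{**})|\leq\lambda$ and the paper's direct limit argument are the same idea), and non-degeneracy/parallelism coming from Lemma \ref{propertieslambda}\textbf{(c)} together with $\mathrm{sgn}(t^{**}(x,\alpha x))=\mathrm{sgn}(\alpha)$. The only cosmetic difference is that the paper deduces opposite invariance directly from the scaling identities with $\alpha=\beta=-1$, while you compose your two sign-flip identities; both are immediate.
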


\begin{proof} Homogeneity follows immediately from Lemmas \ref{propertiest*} and \ref{propertiest**}, and opposite invariance is an easy consequence of it. \\

For continuity we have to prove that $\mathrm{ang_b}$ is continuous in the pairs $(x,y) \in X_o\times X_o$ such that $x \dashv_B y$. Indeed, all involved functions are continuous in pairs $(x,y)$ such that $x$ is not Birkhoff orthogonal to $y$. It is clear that if $x \dashv_B y$, we have that $|t^*(x_n,y_n)|$ and $|t^{**}(x_n,y_n) - t^*(x_n,y_n)|$ converge to $0$ whenever $(x_n,y_n)$ converges to $(x,y)$, and then the desired holds. \\

Non-degeneracy and parallelism follow from Lemma \ref{propertieslambda} \textbf{(c)} and from the fact that $\mathrm{sgn}(t^{**}(x,\alpha x)) = \mathrm{sgn}(\alpha)$. Supplementarity follows immediately from Lemma \ref{propertieslambda} \textbf{(b)} and from $\mathrm{sgn}(t^{**}(x,y)) = -\mathrm{sgn}(t^{**}(-x,y))$.

\end{proof}

\begin{prop} If $(X,||\cdot||)$ is a strictly convex normed space with associated B-angle $\mathrm{ang_b}:X_o\times X_o \rightarrow \mathbb{R}$, then we have $x \dashv_B y$ if and only if $\mathrm{ang_b}(x,y) = \frac{\pi}{2}$.
\end{prop}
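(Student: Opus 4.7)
The plan is to unwind the definition of $\mathrm{ang_b}$ and reduce the statement to equivalences already recorded in the lemmas above. Since $\arccos:[-1,1]\to[0,\pi]$ takes the value $\pi/2$ precisely at $0$, the equality $\mathrm{ang_b}(x,y)=\pi/2$ is equivalent to
\[
\lambda(x,y)\,\frac{\|y\|}{\|x\|}\,\mathrm{sgn}(t^{**}(x,y)) = 0.
\]
Because $\|x\|,\|y\|>0$, the vanishing of this product forces either $\lambda(x,y)=0$ or $\mathrm{sgn}(t^{**}(x,y))=0$, the latter being the same as $t^{**}(x,y)=0$.

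Now I would invoke the two characterizations of Birkhoff orthogonality already in hand. On the one hand, Lemma \ref{propertieslambda}\textbf{(d)} says exactly that $\lambda(x,y)=0$ if and only if $x\dashv_B y$. On the other hand, $t^{**}(x,y)=0$ was built into the definition of $t^{**}$ to mean $x\dashv_B y$ (and conversely, if $t^{**}(x,y)=0$, the defining identity $\|x-t^{**}(x,y)y\|=\|x\|$ is automatic while uniqueness of the second intersection under strict convexity forces $x\dashv_B y$). Hence each of the two alternatives is equivalent to Birkhoff orthogonality, and so is their disjunction.

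Conversely, if $x\dashv_B y$, then by Lemma \ref{propertieslambda}\textbf{(d)} we have $\lambda(x,y)=0$, which immediately makes the argument of $\arccos$ equal to $0$ regardless of the sign factor, so $\mathrm{ang_b}(x,y)=\pi/2$. This closes the equivalence.

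I expect no serious obstacle here: the only mild subtlety is to be clear that the sign convention $\mathrm{sgn}(0)=0$ is being used, so that $t^{**}(x,y)=0$ already forces the whole argument of $\arccos$ to vanish; once that is noted, the proof is essentially a two-line bookkeeping using Lemmas \ref{propertiest**}\textbf{(b)} and \ref{propertieslambda}\textbf{(d)}.
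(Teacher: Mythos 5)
Your proposal is correct and follows essentially the same route as the paper, which simply states that the claim is immediate from Lemma \ref{propertieslambda} and the definition of $t^{**}$; you merely spell out the bookkeeping (vanishing of the $\arccos$ argument forces $\lambda(x,y)=0$ or $t^{**}(x,y)=0$, each equivalent to $x\dashv_B y$). The only cosmetic remark is that the relevant facts are Lemma \ref{propertieslambda}\textbf{(d)} and the definition of $t^{**}$ itself rather than Lemma \ref{propertiest**}\textbf{(b)}, but this does not affect the argument.
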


\begin{proof} This comes immediately from Lemma \ref{propertieslambda} and from the definition of $t^{**}$.

\end{proof}

\subsection{The g-angle}

 In this subsection we will study an angle function based on a functional which, in some sense, is a generalization of the inner product for normed spaces. This is the functional $g:X\times X \rightarrow \mathbb{R}$ defined by
\begin{align*} g(x,y) = \frac{||x||}{2}\left(\tau_{-}(x,y) + \tau_{+}(x,y)\right), \end{align*}
where
\begin{align*} \tau_{\pm} = \lim_{t\rightarrow 0^{\pm}}\frac{1}{t}\left(||x+ty||-||x||\right). \end{align*}
Notice that this is based on the notion of Gateaux derivative, and that we have also an associated orthogonality concept. We say that a vector $x$ is \textit{g-orthogonal} to a vector $y$, writing $x \dashv_g y$, whenever $g(x,y) = 0$ (other concepts of orthogonality based on the functional $g$ will be introduced later).\\

This functional was extensively studied by Mili\v{c}i\v{c} \cite{milicic1973produit, milicic1987gortogonalite, milicic1990fonctionelle, milicic1987generalisation, milicic1998generalization}. We study now some of its properties.

\begin{prop}[Properties of g]\label{propg} In any normed space $(X,||\cdot||)$ the following properties hold for every $x,y \in X$ and $\alpha,\beta \in \mathbb{R}$. \\

\normalfont\noindent\textbf{(a)} \textit{$g(x,x) = ||x||^2$,} \\

\noindent\textbf{(b)} \textit{$g(\alpha x,\beta y) = \alpha\beta g(x,y)$,} \\

\noindent\textbf{(c)} \textit{$g(x,x+y) = ||x||^2 + g(x,y)$, and} \\

\noindent\textbf{(d)} \textit{$|g(x,y)| \leq ||x||\cdot||y||$.}
\end{prop}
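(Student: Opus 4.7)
The plan is to observe first that for each fixed $x,y$ the map $t\mapsto\|x+ty\|$ is convex, so its one-sided derivatives at $0$ exist in $\mathbb{R}$; this guarantees that $\tau_{+}(x,y)$ and $\tau_{-}(x,y)$ are well defined, and each of them lies in $[-\|y\|,\|y\|]$ by the reverse triangle inequality $\bigl|\|x+ty\|-\|x\|\bigr|\le |t|\,\|y\|$. The four assertions are then separate direct computations that I would carry out in the order (a), (d), (c), (b), since each later item uses techniques from the earlier ones.

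For (a), I would just substitute $y=x$: for $|t|$ small one has $\|x+tx\|=(1+t)\|x\|$, hence $\tau_{\pm}(x,x)=\|x\|$, so $g(x,x)=\tfrac{\|x\|}{2}(2\|x\|)=\|x\|^{2}$. For (d), the uniform bound $|\tau_{\pm}(x,y)|\le\|y\|$ coming from the reverse triangle inequality immediately gives $|g(x,y)|\le\tfrac{\|x\|}{2}(|\tau_{-}|+|\tau_{+}|)\le\|x\|\,\|y\|$.

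For (c), the key algebraic trick is to factor $\|x+t(x+y)\|=\|(1+t)x+ty\|=(1+t)\bigl\|x+\tfrac{t}{1+t}y\bigr\|$ for $|t|$ small, and then to make the substitution $s=t/(1+t)$, which maps $0^{\pm}$ to $0^{\pm}$. A short manipulation rewrites $\tfrac{1}{t}\bigl(\|x+t(x+y)\|-\|x\|\bigr)$ as $\tfrac{1}{s}\bigl(\|x+sy\|-\|x\|\bigr)+\|x\|$, so in the limit $\tau_{\pm}(x,x+y)=\tau_{\pm}(x,y)+\|x\|$. Averaging and multiplying by $\|x\|/2$ produces the desired identity $g(x,x+y)=\|x\|^{2}+g(x,y)$.

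The step that deserves the most care is (b). I would split into cases according to the signs of $\alpha$ and $\beta$ and, in each case, use $\|\alpha x+t\beta y\|=|\alpha|\bigl\|x+\tfrac{t\beta}{\alpha}y\bigr\|$ together with the substitution $s=t\beta/\alpha$. When $\beta/\alpha>0$ the substitution preserves the side of the limit and gives $\tau_{\pm}(\alpha x,\beta y)=|\alpha|\cdot\tfrac{\beta}{\alpha}\,\tau_{\pm}(x,y)$, while when $\beta/\alpha<0$ it swaps $\tau_{+}$ and $\tau_{-}$, yielding $\tau_{\pm}(\alpha x,\beta y)=|\alpha|\cdot\tfrac{\beta}{\alpha}\,\tau_{\mp}(x,y)$. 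Because the definition of $g$ uses the symmetric sum $\tau_{-}+\tau_{+}$, the swap is harmless, and in every case one obtains $\tau_{-}(\alpha x,\beta y)+\tau_{+}(\alpha x,\beta y)=|\alpha|\,\tfrac{\beta}{\alpha}\bigl(\tau_{-}(x,y)+\tau_{+}(x,y)\bigr)$. Multiplying by $\|\alpha x\|/2=|\alpha|\,\|x\|/2$ and simplifying $|\alpha|^{2}\cdot\tfrac{\beta}{\alpha}=\alpha\beta$ produces $g(\alpha x,\beta y)=\alpha\beta\,g(x,y)$. The main obstacle, as flagged, is simply the bookkeeping of the sign cases in (b); once one notices that the symmetrized sum $\tau_{-}+\tau_{+}$ is what neutralizes the sign swap, the proof is straightforward.
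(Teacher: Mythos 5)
Your argument is correct: convexity of $t\mapsto\|x+ty\|$ gives existence of the one-sided derivatives, the bound $|\tau_{\pm}(x,y)|\le\|y\|$ gives (a) and (d), the substitution $s=t/(1+t)$ gives (c), and the substitution $s=t\beta/\alpha$ with the observation that the symmetrized sum $\tau_{-}+\tau_{+}$ absorbs the $\tau_{+}\leftrightarrow\tau_{-}$ swap when $\beta/\alpha<0$ gives (b). The paper itself does not prove this proposition at all --- it calls the proof easy and defers to Mili\v{c}i\v{c}'s papers --- so there is no argument to compare against; your computations are exactly the standard direct verification one would expect there. The only loose end is that your case analysis in (b) divides by $\alpha$ (and tacitly assumes $\beta\neq 0$), whereas the statement allows $\alpha,\beta\in\mathbb{R}$ arbitrary; the excluded cases are trivial, since $\alpha=0$ makes the prefactor $\|\alpha x\|/2$ vanish and $\beta=0$ makes $\tau_{\pm}(\alpha x,0)=0$, so both sides are $0$, but you should say so explicitly for completeness.
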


The proof of this proposition is easy, and the reader may find it in the references listed above. As a consequence, we have immediately that $g$ is a \emph{semi-inner product in the sense of Lumer-Giles} (see \cite{dragomirsemiinner}). Moreover, it is clearly norm generating, i.e., $g(x,x) = ||x||^2$ for every $x \in X$. It is also worth noticing that in smooth normed spaces the functional $g$ can be defined in a simpler way, namely by

\begin{align*} g(x,y) = ||x||\lim_{t\rightarrow 0}\frac{||x+ty||-||x||}{t}. \end{align*}

Indeed, in such a space the norm is Gateaux differentiable (see \cite{james1947orthogonality}). In this case, $g$ is the unique norm generating a semi-inner product. Also, in these spaces the orthogonality $\dashv_g$ is equivalent to Birkhoff orthogonality (cf. \cite{milicic1998generalization} and \cite{milicic2011singer}).\\

We define the g-angle function $\mathrm{ang_g}:X_o\times X_o \rightarrow \mathbb{R}$ to be
\begin{align*} \mathrm{ang_g}(x,y) = \mathrm{arccos}\frac{g(x,y)}{||x||\cdot||y||}. \end{align*}
Proposition \ref{propg} guarantees that such a function is well defined. It is clear that this angle function preserves g-orthogonality and, consequently, also Birkhoff orthogonality if the space is smooth. \\

For the g-angle, only the structural axiom 3 (homogeneity) and the positional property 8 (opposite invariance) are immediate. Also it is clear that $\mathrm{ang_g}(\alpha x,\beta y) = \pi - \mathrm{ang_g}(x,y)$ if $\mathrm{sgn}(\alpha\beta) < 0$. Despite not necessarily having 5 (non-degeneracy) and 6 (parallelism), we can say that writing $x = \alpha y$, then $\mathrm{ang_g}(x,y) = 0$ if $\alpha > 0$ and $\mathrm{ang_g}(x,y) = \pi$ if $\alpha < 0$. We also can ensure continuity in the second variable, as the next lemma shows. Regarding continuity we have the following

\begin{lemma} The g-angle is continuous in the second variable for any normed space $(X,||\cdot||)$, and continuous in the first one if and only if the space is smooth.
\end{lemma}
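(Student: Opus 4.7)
The plan is to handle the two variables separately and, in each case, reduce continuity of $\mathrm{ang_g}(x,y)=\mathrm{arccos}\bigl(g(x,y)/(\|x\|\cdot\|y\|)\bigr)$ to continuity of the numerator $g(x,y)$. Indeed, $\mathrm{arccos}$ is continuous on $[-1,1]$; Proposition \ref{propg}\textbf{(d)} keeps the argument in that interval; and the denominator $\|x\|\cdot\|y\|$ is continuous and bounded away from zero on $X_o\times X_o$.

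For continuity in the second variable, the first step is the uniform Lipschitz-type bound
\begin{align*}
\Bigl|\tfrac{\|x+ty\|-\|x\|}{t}-\tfrac{\|x+ty'\|-\|x\|}{t}\Bigr|\le \|y-y'\|,
\end{align*}
valid for every $t\neq 0$, which on passage to the one-sided limits yields $|\tau_{\pm}(x,y)-\tau_{\pm}(x,y')|\le\|y-y'\|$. This makes $y\mapsto g(x,y)$ Lipschitz with constant $\|x\|$, so $\mathrm{ang_g}(x,\cdot)$ is continuous on $X_o$.

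For continuity in the first variable under the smoothness hypothesis, I would exploit the convexity of $t\mapsto\|x+ty\|$ to express
\begin{align*}
\tau_{+}(x,y)=\inf_{t>0}\tfrac{\|x+ty\|-\|x\|}{t},\qquad \tau_{-}(x,y)=\sup_{t<0}\tfrac{\|x+ty\|-\|x\|}{t},
\end{align*}
identifying $x\mapsto\tau_{+}(x,y)$ as upper semicontinuous and $x\mapsto\tau_{-}(x,y)$ as lower semicontinuous. Smoothness forces $\tau_{+}(x,y)=\tau_{-}(x,y)$ for every $x\in X_o$, and combining this with $\tau_{-}(x_n,y)\le\tau_{+}(x_n,y)$ gives the sandwich
\begin{align*}
\limsup_n\tau_{+}(x_n,y)\le\tau_{+}(x,y)=\tau_{-}(x,y)\le\liminf_n\tau_{-}(x_n,y)\le\liminf_n\tau_{+}(x_n,y),
\end{align*}
which collapses every inequality and yields $\tau_{\pm}(x_n,y)\to\tau_{\pm}(x,y)$ whenever $x_n\to x$, hence $g(x_n,y)\to g(x,y)$.

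The delicate direction, and the main obstacle I anticipate, is the converse. Assuming non-smoothness I would pick $x_0\in X_o$ and $y_0\in X$ with $\tau_{-}(x_0,y_0)<\tau_{+}(x_0,y_0)$, set $\phi(s)=\|x_0+sy_0\|$, and analyze the convex real function $\phi$ with one-variable facts only: $\phi_{+}'$ is non-decreasing and right-continuous, $\phi_{-}'(s)\le \phi_{+}'(s)$, and secant-slope monotonicity forces both one-sided derivatives at $s>0$ to converge to $\phi_{+}'(0)=\tau_{+}(x_0,y_0)$ as $s\to 0^{+}$. Since $\tau_{\pm}(x_0+sy_0,y_0)=\phi_{\pm}'(s)$, this gives
\begin{align*}
g(x_0+sy_0,y_0)\ \longrightarrow\ \|x_0\|\,\tau_{+}(x_0,y_0)\ \neq\ g(x_0,y_0)\qquad\text{as }s\to 0^{+},
\end{align*}
the strict inequality coming from $\tau_{-}(x_0,y_0)<\tau_{+}(x_0,y_0)$. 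Since $x_0+sy_0\to x_0$ in norm, this witnesses the failure of continuity of $g(\cdot,y_0)$, and hence of $\mathrm{ang_g}(\cdot,y_0)$, at $x_0$. The subtle bit is the clean proof of $\lim_{s\to 0^{+}}\phi_{-}'(s)=\phi_{+}'(0)$; I would get it directly from $\phi_{-}'(s)\le\phi_{+}'(s)$ combined with the trivial bound $\phi_{-}'(s)\ge(\phi(s)-\phi(0))/s\to\phi_{+}'(0)$, avoiding any heavy convex-analysis machinery.
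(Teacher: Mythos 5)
Your argument is correct, and while the second-variable part coincides with the paper's (you prove the Lipschitz estimate $|\tau_{\pm}(x,y)-\tau_{\pm}(x,y')|\le\|y-y'\|$, which is exactly the paper's chain $\tau_{\pm}(x,y)-\|z\|\le\tau_{\pm}(x,y+z)\le\tau_{\pm}(x,y)+\|z\|$), your treatment of the first variable is genuinely different: the paper simply cites \cite[Theorem 13]{dragomirsemiinner} for the equivalence with smoothness, whereas you prove both directions from scratch. For smoothness $\Rightarrow$ continuity you use the convexity representations $\tau_{+}(x,y)=\inf_{t>0}t^{-1}(\|x+ty\|-\|x\|)$ and $\tau_{-}(x,y)=\sup_{t<0}t^{-1}(\|x+ty\|-\|x\|)$ to get upper, respectively lower, semicontinuity in $x$, and the sandwich with $\tau_{-}=\tau_{+}$ closes the gap; for the converse you restrict to the line $s\mapsto\|x_0+sy_0\|$ at a point of non-smoothness and use elementary one-variable convex analysis (monotone secant slopes, $\phi'_{-}(s)\ge(\phi(s)-\phi(0))/s$ and right-continuity of $\phi'_{+}$) to exhibit $g(x_0+sy_0,y_0)\to\|x_0\|\tau_{+}(x_0,y_0)\neq g(x_0,y_0)$, hence a concrete witness of discontinuity of $\mathrm{ang_g}(\cdot,y_0)$. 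All steps check out (the passage from $g$ to $\mathrm{ang_g}$ is legitimate because $\arccos$ is a homeomorphism of $[-1,1]$ onto $[0,\pi]$ and the denominator is continuous and nonzero at each point of $X_o\times X_o$ --- note it is not bounded away from zero globally on $X_o\times X_o$, but continuity is local, so this phrasing slip is harmless). The paper's route is shorter but leaves the ``if and only if'' resting on an external reference; yours makes the lemma self-contained and elementary, at the cost of invoking (or reproving) standard facts about one-sided derivatives of convex functions.
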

\begin{proof} One can readily see that the following holds:
\begin{align*} \tau_{\pm}(x,y) - ||z|| \leq \tau_{\pm}(x,y+z) \leq \tau_{\pm}(x,y) + ||z||\,. \end{align*}
Hence, for a converging sequence $y_n \rightarrow y$ we may write
\begin{align*} g(x,y) - ||y_n-y|| \leq g(x,y + (y_n-y)) \leq g(x,y) + ||y_n-y||, \end{align*}
and this yields $g(x,y) \leq \lim_{n\rightarrow\infty}g(x,y_n) \leq g(x,y)$. This shows that the functional $g$ is continuous in the second entry, and therefore also is $\mathrm{ang_g}$. For the other statement we refer the reader to \cite[Theorem 13]{dragomirsemiinner}.

\end{proof}

 Using the functional $g$, Mili\v{c}i\v{c} defined a \textit{quasi-inner product space} to be a normed space where the equality
\begin{align*} ||x+y||^4 - ||x-y||^4 = 8\left(||x||^2g(x,y) + ||y||^2g(y,x)\right) \end{align*}
holds for every $x,y \in X$. This is a generalization of the parallelogram equality for inner product spaces, and of course any Euclidean space is a quasi-inner product space (see \cite{milicic1998generalization} for an example of non-trivial quasi-inner product space). In these spaces we have two other g-based orthogonality types which coincide with already known orthogonality concepts, and we introduce them now. Two vectors $x,y\in X$ are said to be \textit{symmetrically g-orthogonal} if $g(x,y) + g(y,x) = 0$. In this case we write $x \dashv_{g,s} y$. It is easy to see that in a quasi-inner product space the symmetric g-orthogonality is equivalent to Singer orthogonality. We also say that a vector $x \in X$ is \textit{isosceles g-orthogonal} to $y \in X$ if $||x||^2g(x,y) + ||y||^2g(y,x) = 0$; this is denoted by $x \dashv_{g,i} y$. One can easily check that in a quasi-inner product space isosceles orthogonality and isosceles g-orthogonality are equivalent, justifying the name that we have chosen. Preserving these g-orthogonality types correspondingly, we can define the angle functions $\mathrm{ang_{g,s}}:X_o\times X_o \rightarrow \mathbb{R}$ and $\mathrm{ang_{g,i}}:X_o\times X_o \rightarrow \mathbb{R}$ by
\begin{align*} \mathrm{ang_{g,s}}(x,y) = \mathrm{arccos}\frac{g(x,y)+g(y,x)}{2||x||\cdot||y||} \ \mathrm{and} \\ \mathrm{ang_{g,i}}(x,y) = \mathrm{arccos}\frac{||x||^2g(x,y) + ||y||^2g(y,x)}{||x||\cdot||y||\left(||x||^2+||y||^2\right)}\,, \end{align*}
respectively. Properties of these angles may be investigated by using similar methods as they were used for the usual g-angle. The angle function $\mathrm{ang_{g,s}}$ was studied in \cite{milicic1993gangle} and used in \cite{miliciccharacterizations} for characterizing convexity types for the unit sphere, and the angle function $\mathrm{ang_{g,i}}$ was defined and studied in \cite{milicic2007b}.\\

We mention also that using the uniqueness of a norm generating a semi-inner product in smooth normed spaces, Balestro and Shonoda \cite{Ba-Sho} concluded that the semi-inner product $g$ yields a known cosine-type function (namely, the function $\mathrm{cm}$ studied in \cite[Chapter 8]{Tho}), in a similar way as the inner product yields the usual cosine in Euclidean spaces. This can also be interpreted as a non-local definition for the Gateaux derivative of the norm, as it is clarified there.

\section{Angle measures} \label{anglemeasures}

\subsection{Brass' definition} \label{brassanglemeasures}

Brass \cite{brass} defined an angle measure in a normed plane to be a Borel measure $\mu$ on the unit circle $S$ satisfying\\

\noindent\textbf{(i)} $\mu(S) = 2\pi$,\\
\noindent\textbf{(ii)} for any Borel set $A \subseteq S$ it holds that $\mu(-A) = \mu(A)$, and\\
\noindent\textbf{(iii)} for each $v \in S$ we have $\mu(\{v\}) = 0$.\\

Following D\"{u}velmeyer \cite{Duev}, we add an additional non-degeneracy hypothesis: \\

\noindent\textbf{(iv)} Any non-degenerate arc of the unit circle has positive measure.\\

As first examples of such angular measures we have the measure given by length (in the norm) of corresponding arcs of the unit circle, as well as the measure determined by respective sector areas. Denoting them respectively as $\mu_l$ and $\mu_a$, we have the following

\begin{teo} In any Minkowski plane, $\mu_a$ is proportional to the measure in the unit circle given by the arc length in the antinorm.
\end{teo}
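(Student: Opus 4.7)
The plan is to parametrize an arbitrary arc on $S$ and rewrite both the sector area $\mu_a$ and the antinorm arc length as explicit integrals, so they differ only by an overall constant. Let $A\subseteq S$ be an arc, and let $\gamma:[a,b]\to S$ be a Lipschitz parametrization (which exists because $S$ is a convex, hence rectifiable, Jordan curve). Using the determinant form $[\cdot,\cdot]$ as the area form, the sector swept from the origin by the radii to the points of $A$ can be parametrized by $\sigma(r,s)=r\gamma(s)$ on $[0,1]\times[a,b]$; computing its Jacobian gives
\begin{align*}
\mu_a(A)=\frac{1}{2}\int_a^b \bigl|[\gamma(s),\gamma'(s)]\bigr|\,ds,
\end{align*}
while the antinorm arc length of $A$ is, by definition,
\begin{align*}
\ell_a(A)=\int_a^b \|\gamma'(s)\|_a\,ds.
\end{align*}

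The heart of the argument is the pointwise identity $\bigl|[\gamma(s),\gamma'(s)]\bigr|=\|\gamma'(s)\|_a$ at every $s$ where $\gamma$ is differentiable. To obtain this I would first establish the duality lemma: for nonzero $u,v\in X$ one has $|[u,v]|\leq \|u\|\cdot\|v\|_a$, with equality if and only if $u\dashv_B v$. The inequality follows from the definition of $\|\cdot\|_a$ applied to $u/\|u\|\in S$. For the equality case, $|[u/\|u\|,v]|=\|v\|_a$ says that $u/\|u\|$ is a support point of $S$ for the linear functional $w\mapsto [w,v]$; the kernel of that functional has direction $v$, so the tangent line to $S$ at $u/\|u\|$ contains $v$, which is precisely $u\dashv_B v$. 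Now $\gamma(s)\in S$ and $\gamma'(s)$ is tangent to $S$ at $\gamma(s)$, so $\gamma(s)\dashv_B\gamma'(s)$ wherever $\gamma$ is differentiable; combined with $\|\gamma(s)\|=1$, the duality lemma yields the desired pointwise identity.

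Integrating gives $\mu_a(A)=\tfrac{1}{2}\,\ell_a(A)$ for every arc $A$. Since arcs generate the Borel $\sigma$-algebra on $S$, and both sides are Borel measures, this extends to all Borel subsets, proving proportionality with constant $1/2$. The main technical point to watch is the non-smoothness of $S$: corners form at most a countable set on a convex curve and therefore contribute nothing either to the sector area or to antinorm arc length, so the identity holding a.e. is enough. The remaining verifications (Lipschitz rectifiability of convex curves, the change-of-variables formula, and the measure-theoretic extension from arcs to Borel sets) are routine.
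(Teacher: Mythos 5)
Your argument is correct. The paper itself gives no proof of this statement---it simply refers to \cite{martiniantinorms}---and what you have written is essentially the standard argument from that reference, filled in with the necessary technical care: the sector-area integral $\frac{1}{2}\int_a^b\bigl|[\gamma(s),\gamma'(s)]\bigr|\,ds$, the antinorm arc-length integral, and the pointwise identity linking them via the duality $|[u,v]|\leq\|u\|\cdot\|v\|_a$ with equality exactly when $u\dashv_B v$. Your handling of the equality case is right (note that the direction you actually use is that $\gamma(s)\dashv_B\gamma'(s)$ forces equality, which follows from the supporting-line description together with the central symmetry of $B$, since then $\sup_{y\in S}|[v,y]|$ is attained at $\gamma(s)$), and your treatment of non-smooth points is adequate: corners of $S$ are at most countable, the Lipschitz parametrization is differentiable almost everywhere, and at a parameter of differentiability with $\gamma'(s)\neq 0$ the point $\gamma(s)$ is a smooth point of $S$ whose unique supporting line has direction $\gamma'(s)$, so the identity holds almost everywhere, which suffices for both integrals. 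The only cosmetic caveat is normalization: $\mu_a$ as an angle measure is rescaled so that $\mu_a(S)=2\pi$, so the constant of proportionality is not literally $\tfrac{1}{2}$ but depends on the chosen determinant form and on the total sector area; since the theorem claims only proportionality, this does not affect your conclusion.
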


We refer the reader to \cite{martiniantinorms} for a proof. In particular, it follows that $\mu_a$ and $\mu_l$ are proportional in Radon planes. But these are not the only Minkowski planes with such property. D\"{u}velmeyer \cite{Duev} proved that the norms for which $\mu_a$ and $\mu_l$ are proportional are those whose unit circle is an \emph{equiframed curve}, which are defined to be centrally symmetric closed convex curves that are touched at each of their points by a circumscribed parallelogram of smallest area. A basic reference for equiframed curves is \cite{Ma-Sw2}. \\

Given a measure $\mu$ on $S$ as proposed by Brass, we define the associated angle function $\mathrm{ang_{\mu}}:X_o\times X_o\rightarrow \mathbb{R}$ to be the function which combines a pair of vectors $x,y \in X_o$ with the measure of the smaller arc determined on $S$ by the rays in the directions of $x$ and $y$. It is clear that any such angle function obeys the structural axioms 2 (symmetry), 3 (homogeneity), 4 (additivity), and 5 (non-degeneracy). We also have 1 (continuity), as a consequence of the following lemma, which comes from standard Measure Theory.

\begin{lemma}\label{lemma10s} Let $\lambda(S)$ be the length, in the norm, of the unit circle $S$ and consider the arc-length parametrization $p:\left[0,\frac{\lambda(S)}{2}\right] \rightarrow S$ of one of the arcs from $x_0$ to $-x_0$, where $x_0 \in S$ is any fixed vector. Then the \emph{mapping} $t \mapsto \mathrm{ang_{\mu}}(x_0,p(t))$ is continuous.
\end{lemma}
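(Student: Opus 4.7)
The plan is to reduce the continuity of the map in question to continuity of a single monotone Borel-measure function, and then invoke the standard continuity-from-above/below theorems for finite measures. Define $F\colon [0,\lambda(S)/2]\to\mathbb{R}$ by
\[
F(t) := \mu(p([0,t])).
\]
For each $t\in[0,\lambda(S)/2]$ the two arcs cut on $S$ by the rays through $x_0$ and $p(t)$ are $p([0,t])$ and $S\setminus p([0,t))$, of $\mu$-measures $F(t)$ and $2\pi-F(t)$ respectively (using that $\mu$ vanishes on the two endpoint singletons); hence
\[
\mathrm{ang_{\mu}}(x_0,p(t)) = \min\{F(t),\,2\pi-F(t)\}.
\]
It therefore suffices to show that $F$ is continuous on $[0,\lambda(S)/2]$.

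Monotonicity of $F$ is immediate. For left-continuity at $t$, take $t_n\nearrow t$: the sets $p([0,t_n])$ form an increasing family of Borel sets with union $p([0,t))=p([0,t])\setminus\{p(t)\}$. Property \textbf{(iii)} gives $\mu(\{p(t)\})=0$, so continuity of the finite Borel measure $\mu$ from below yields $F(t_n)\to F(t)$. For right-continuity, take $t_n\searrow t$: the sets $p([0,t_n])$ form a decreasing family with intersection $p([0,t])$, and continuity from above (valid since $\mu(S)=2\pi<\infty$) again gives $F(t_n)\to F(t)$. Composing with the continuous function $u\mapsto \min\{u,2\pi-u\}$ completes the argument.

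I expect no substantial obstacle here: the only conceptual point beyond bookkeeping is the use of axiom \textbf{(iii)} to absorb the single boundary point $p(t)$ in the left-continuity step, without which $F$ could in principle jump at atoms of $\mu$. Every other ingredient is a direct consequence of the fact that $\mu$ is a finite Borel measure and $p$ is a continuous parametrization, which is precisely the ``standard Measure Theory'' hinted at in the statement.
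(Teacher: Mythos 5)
Your argument is correct and is exactly the ``standard Measure Theory'' proof the paper alludes to without writing out: monotone set function $F(t)=\mu(p([0,t]))$, continuity from below/above for the finite Borel measure $\mu$, with axiom \textbf{(iii)} absorbing the endpoint atom, and the identification $\mathrm{ang_{\mu}}(x_0,p(t))=\min\{F(t),2\pi-F(t)\}$ (indeed $=F(t)$, since $p([0,t])$ lies in a half-circle). No gaps; the only implicit ingredient worth noting is the injectivity of the arc-length parametrization $p$, used when computing the decreasing intersection in the right-continuity step.
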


\noindent It is also easy to see that any angle function defined by an angle measure satisfies all positional properties. Recall that, given an angle function $\mathrm{ang}:X_o\times X_o\rightarrow\mathbb{R}$, we say that the measure of an angle $\wk\mathbf{xyz}$ formed by points $x,y,z \in X$ is $\mathrm{ang}(x-y,z-y)$. We continue with studying some geometric properties of a generic angle measure.

\begin{lemma}\label{lemma28} Fixing any angle measure $\mu$ in $S$, the sum of the interior angles of any triangle in $V$ equals $\pi$.
\end{lemma}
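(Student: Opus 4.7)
The plan is to identify the three arcs that the (translated) edge directions cut out of $S$ with the three exterior angles of the triangle, so that these exterior angles sum to $\mu(S)=2\pi$ and, by supplementarity, the interior angles sum to $\pi$. Let $\alpha,\beta,\gamma$ denote the interior angles at $a,b,c$, and set $v_1=(b-a)/\|b-a\|$, $v_2=(c-b)/\|c-b\|$, $v_3=(a-c)/\|a-c\|$. These three points of $S$ are pairwise distinct (the triangle is non-degenerate) and satisfy the identity $\|b-a\|\,v_1+\|c-b\|\,v_2+\|a-c\|\,v_3=0$. After relabelling if necessary, assume $v_1,v_2,v_3$ occur in this cyclic order on $S$; they split $S$ into three consecutive arcs $A_{12},A_{23},A_{31}$, and by additivity $\mu(A_{12})+\mu(A_{23})+\mu(A_{31})=\mu(S)=2\pi$.

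The crux is that each $\mu(A_{i,i+1})$ is at most $\pi$. Central symmetry of $\mu$ combined with conditions (iii) and (iv) forces every Euclidean semicircle of $S$ to have $\mu$-measure exactly $\pi$: the two closed semicircles between $v$ and $-v$ overlap only in the two endpoints (measure $0$ by (iii)) and each is the image of the other under the central-symmetry involution, so they share the total mass $2\pi$ equally. Consequently, $\mu(A)>\pi$ for an arc $A$ holds if and only if $A$ strictly contains a Euclidean semicircle. If some $\mu(A_{i,i+1})$ exceeded $\pi$, say $\mu(A_{12})$, then placing $v_1$ at Euclidean angle $0$ the cyclic-order condition forces $v_2,v_3$ to have Euclidean angles in $(\pi,2\pi)$, hence both to lie in the open lower half-plane $\{y<0\}$. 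Evaluating the linear functional $v\mapsto\langle v,(0,1)\rangle$ on $\|b-a\|\,v_1+\|c-b\|\,v_2+\|a-c\|\,v_3=0$ would then produce a strictly negative sum, a contradiction.

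Since each $A_{i,i+1}$ has $\mu$-measure at most $\pi$, it is the smaller of the two arcs between its endpoints, so $\mu(A_{i,i+1})=\mathrm{ang}_\mu(v_i,v_{i+1})$ by the definition of the angle function. Supplementarity (positional property 7, which the angle function of any angle measure enjoys) then gives
\begin{align*}
\mathrm{ang}_\mu(v_1,v_2)=\pi-\mathrm{ang}_\mu(-v_1,v_2)=\pi-\beta,
\end{align*}
and analogously the other two arc measures equal $\pi-\gamma$ and $\pi-\alpha$. Summing yields $2\pi=(\pi-\alpha)+(\pi-\beta)+(\pi-\gamma)$, so $\alpha+\beta+\gamma=\pi$. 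The main obstacle is the semicircle bound — this is the one step where the abstract measure-theoretic axioms on $\mu$ must be coupled to the affine geometry of the plane (via the positive-coefficient identity among $v_1,v_2,v_3$), and it must be argued directly, without appealing to the sum-of-angles formula itself.
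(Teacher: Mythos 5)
Your proof is correct, but it follows a slightly different route than the paper's. The paper argues directly with interior angles: the arc realizing the angle at $a$, together with the centrally reflected copies of the arcs realizing the angles at $c$ and $b$, chain end to end into a closed half-circle, and additivity plus the fact that a half-circle has measure $\pi$ (a consequence of properties (ii) and (iii)) gives the result in two lines. You instead decompose the whole unit circle by the three edge directions $v_1,v_2,v_3$, obtain $2\pi$ by additivity, and convert each of these ``exterior'' arcs into $\pi$ minus an interior angle via supplementarity --- the classical exterior-angle argument; since the paper has already noted that measure-induced angle functions satisfy all positional properties, invoking supplementarity is legitimate. In substance the two arguments are equivalent (supplementarity is exactly the half-circle fact), but your version buys something the paper's terse proof glosses over: the verification that each arc in the decomposition is indeed the ``smaller'' arc defining $\mathrm{ang}_{\mu}$, i.e.\ has measure at most $\pi$. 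Your argument for this --- every closed semicircle has measure exactly $\pi$ by central symmetry and (iii), and an arc of measure exceeding $\pi$ would push two of the $v_i$ into an open half-plane, contradicting the closing identity $\|b-a\|\,v_1+\|c-b\|\,v_2+\|a-c\|\,v_3=0$ with positive coefficients --- is precisely the justification of the cyclic-order/chaining step that the paper's proof takes for granted. A minor remark: for the contradiction you only need the implication ``$\mu(A)>\pi$ implies $A$ strictly contains a semicircle'', which uses only (i)--(iii); the non-degeneracy hypothesis (iv) is not actually required at that point.
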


\begin{proof} Let $\Delta\mathbf{abc}$ be a triangle in $(V,||\cdot||)$. The union of the respectively smaller arcs from $\frac{b-a}{||b-a||}$ to $\frac{c-a}{||c-a||}$, from $\frac{c-a}{||c-a||}$ to $\frac{c-b}{||c-b||}$, and from $\frac{b-c}{||b-c||}$ to $\frac{b-a}{||b-a||}$ is a half-circle. Thus, additivity of the measure yields the desired.
\end{proof}

Regarding equilateral triangles, Brass proved the following theorem; see \cite{brass} or \cite{fankhaneldissertation} for a proof.

\begin{teo} Let $(X,||\cdot||)$ be a normed plane. Then there exists an angle measure $\mu$ for which every equilateral triangle is equiangular if and only if the norm is not rectilinear (i.e., if the unit circle is not a parallelogram).
\end{teo}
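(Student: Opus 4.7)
My plan is to prove the two implications separately: necessity is a concrete geometric contradiction argument; sufficiency requires constructing a measure with a rigid invariance condition.

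For necessity, assume $S$ is a parallelogram. After an affine change of coordinates (which permutes admissible angle measures by pushforward), take the norm to be $\|\cdot\|_\infty$ with $S=\partial[-1,1]^2$. Fix $y=(0,1)$ and, for $t\in[0,1]$, set $x_t=(1,t)$. A direct check gives $\|x_t\|_\infty=\|y\|_\infty=\|x_t-y\|_\infty=1$, so each triangle $\{o,x_t,y\}$ is equilateral. By Lemma~\ref{lemma28} the interior angles of any triangle sum to $\pi$, so the equiangularity hypothesis forces every interior angle to equal $\pi/3$. In particular, for every $t\in[0,1]$ the $\mu$-measure of the arc on $S$ from $(0,-1)=\frac{o-y}{\|o-y\|}$ to $(1,t-1)=\frac{x_t-y}{\|x_t-y\|}$, running counterclockwise through the corner $(1,-1)$, is $\pi/3$. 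Additivity of $\mu$ then forces the sub-arc of $S$ from $(1,-1)$ to $(1,0)$ along the right edge to have $\mu$-measure zero, contradicting the non-degeneracy axiom (iv).

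For sufficiency, suppose $S$ is not a parallelogram. I would set up the equilateral-successor correspondence $\varphi\colon S\to S$, sending $x\in S$ to the point $\varphi(x)\in S$ with $\|x-\varphi(x)\|=1$ lying counterclockwise of $x$. Every equilateral triangle in $X$ is, up to translation and positive homothety, of the form $\{o,x,\varphi(x)\}$, so requiring all equilateral triangles to be equiangular translates exactly into the single invariance condition $\mu([x,\varphi(x)])=\pi/3$ for every $x\in S$. The first technical step is to show that the non-parallelogram hypothesis implies $\varphi$ is well-defined and a homeomorphism of $S$: a plateau of $\varphi$ on some arc would, mirroring the necessity argument, produce two parallel sub-arcs of $S$ of unit length, and central symmetry would then force $S$ to be a parallelogram.

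Once $\varphi$ is a homeomorphism, I would invoke the classical fact that every point of $S$ is a vertex of an affinely regular hexagon inscribed in $S$ to conclude $\varphi^3=-\mathrm{id}_S$ and hence $\varphi^6=\mathrm{id}_S$. Therefore $\varphi$ is topologically conjugate to the rigid rotation by $\pi/3$ on $\mathbb{R}/2\pi\mathbb{Z}$ via some homeomorphism $h$, and setting $\mu=h^{\ast}(\mathrm{Lebesgue})$ (rescaled to total mass $2\pi$) yields an atomless Borel measure with $\mu([x,\varphi(x)])=\pi/3$ for every $x\in S$. The symmetry axiom (ii) follows from $\varphi^3=-\mathrm{id}_S$ combined with the $\varphi$-invariance of $\mu$, and non-degeneracy (iv) is built into Lebesgue measure. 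The main obstacle I anticipate is precisely the plateau-free step of the sufficiency argument: turning the geometric non-parallelogram hypothesis on $S$ into the statement that $\varphi$ is a genuine homeomorphism. Everything downstream is then a standard conjugation-to-rotation argument followed by routine measure-theoretic bookkeeping.
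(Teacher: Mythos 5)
The paper does not reproduce a proof of this theorem (it refers to \cite{brass} and \cite{fankhaneldissertation}), so your proposal has to stand on its own, and its sufficiency half has a genuine gap: the ``first technical step'' is false. The claim that a plateau of the distance-one successor correspondence forces $S$ to be a parallelogram is not correct. A plateau at a basepoint $x$ means that $S\cap(x+S)$ contains a nondegenerate arc $A$; then both $A$ and $A-x$ lie on $S$, which forces $A$ to be a segment parallel to $x$, so $A\cup(A-x)$ lies on one line and the plateau is equivalent to $S$ containing a segment of norm-length strictly greater than $1$ --- not of length $2$, which is what characterizes the parallelogram. Concretely, take the hexagon with vertices $\pm(1,0)$, $\pm(1,c)$, $\pm(0,1)$ with $0<c<1$: the edge from $(1,c)$ to $(0,1)$ has norm-length $2-c>1$, and if $u$ is the unit vector parallel to this edge, then every point $z$ of the edge with $z-u$ still on the edge satisfies $\|z\|=\|u\|=\|z-u\|=1$; these $z$ form a nondegenerate segment, so $\varphi$ is genuinely multivalued at $\pm u$ although $S$ is a hexagon. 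For all such norms your conjugation-to-rotation construction never gets started, and these are exactly the delicate cases.

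Moreover, your own additivity argument applied to this plateau shows that \emph{any} measure making all equilateral triangles equiangular must give the nondegenerate subsegment of the long edge measure zero; hence for such non-parallelogram norms no measure satisfying the added axiom (iv) can exist, and the theorem is only true for Brass measures satisfying (i)--(iii). This has two consequences for your plan. First, the sufficiency construction cannot be the pushforward of Lebesgue measure under a homeomorphism (that never produces arcs of measure zero); on long edges one needs a monotone but non-injective ``angle coordinate'' that deliberately collapses part of the edge, which is a genuinely different construction from the plateau-free conjugation argument. Second, your necessity proof should not invoke (iv): in the parallelogram case you get an outright contradiction from (i)--(iii) alone by running the same additivity argument at all three vertices of the triangles $\{o,x_t,y\}$ --- the angles at $o$ and at $y$ force both halves of the right edge of the square to have measure zero, symmetry (ii) transfers this to the left edge, and the angle at $x_0$ requires a half of the left edge to have measure $\pi/3$, giving $\pi/3=0$. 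The necessity half is thus easily repaired; the sufficiency half is missing precisely in the case of unit circles with long segments.
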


\subsection{I-measures}

Following \cite{fankhanel2009i}, this section is devoted to angle measures for which isosceles triangles have equal base angles. We call such a measure an I-measure, and we prove that such a measure is only possible in the Euclidean plane.

\begin{lemma}\label{lemma29} For any angle measure $\mu$ on $S$ the following properties are equivalent:\\

\normalfont\noindent\textbf{(i)} \textit{The measure of the angle $\wk\mathbf{vw(-v)}$ equals $\frac{\pi}{2}$ for any linearly independent $v,w \in S$.} \\

\noindent\textbf{(ii)} \textit{We have} \textit{$\mathrm{ang_{\mu}}(x,y) = \frac{\pi}{2}$ for any $x,y \in X_o$ with $x \dashv_I y$.}
\end{lemma}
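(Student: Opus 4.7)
The plan is to exploit the fact that, by the convention introduced right after the angle axioms, the measure of $\wk\mathbf{vw(-v)}$ is $\mathrm{ang_{\mu}}(v-w,-v-w)$, together with the elementary observation that the pair $(v-w,\,-v-w)$, for $v,w\in S$, is always isosceles orthogonal, while conversely every isosceles orthogonal pair is of this form up to a positive scalar. Combined with homogeneity of $\mathrm{ang_{\mu}}$ (structural axiom 3), which every measure-based angle function satisfies as already noted, this should give both implications almost without computation.

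Direction (ii)$\Rightarrow$(i) is the easier half. Given linearly independent $v,w\in S$, I would set $x:=v-w$ and $y:=-v-w$ (both nonzero, since $v\neq \pm w$) and compute $x+y=-2w$ and $x-y=2v$, whence $\|x+y\|=2\|w\|=2\|v\|=\|x-y\|$, i.e., $x\dashv_{I}y$. Hypothesis (ii) then reads $\mathrm{ang_{\mu}}(v-w,-v-w)=\pi/2$, which is exactly the measure of $\wk\mathbf{vw(-v)}$.

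Direction (i)$\Rightarrow$(ii) requires inverting this construction. Given $x,y\in X_{o}$ with $x\dashv_{I}y$, let $c:=\|x+y\|=\|x-y\|$; this is strictly positive, since $c=0$ would force $x+y=0$ and $x-y=0$, hence $x=0$. Rescale to $x':=(2/c)x$, $y':=(2/c)y$, so $\|x'+y'\|=\|x'-y'\|=2$, and set $v:=(x'-y')/2$, $w:=-(x'+y')/2$. One checks $\|v\|=\|w\|=1$, that $v,w$ are linearly independent (else $v=w$ forces $x'=0$ and $v=-w$ forces $y'=0$), and that $v-w=x'$, $-v-w=y'$. Hypothesis (i) gives $\mathrm{ang_{\mu}}(x',y')=\pi/2$, and homogeneity of $\mathrm{ang_{\mu}}$ transports this back to $\mathrm{ang_{\mu}}(x,y)=\pi/2$. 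No serious obstacle is expected: the whole content of the lemma is this algebraic bijection between isosceles orthogonal pairs and angles of the form $\wk\mathbf{vw(-v)}$ with $v,w\in S$ linearly independent, plus the already established positive-homogeneity of $\mathrm{ang_{\mu}}$.
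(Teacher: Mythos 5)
Your proof is correct and follows essentially the same route as the paper: both directions rest on the algebraic correspondence $(v,w)\mapsto(v-w,-v-w)$ between pairs on a common circle and isosceles orthogonal pairs, together with the homogeneity (and symmetry/opposite invariance) of $\mathrm{ang_{\mu}}$. Your explicit rescaling in the direction (i)$\Rightarrow$(ii) so that $v,w$ land exactly on $S$ is a minor tidying of the paper's argument, which instead works on the circle of radius $\|x-y\|$ and leaves the reduction to the unit circle implicit.
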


\begin{proof} Assume that \textbf{(i)} holds and let $x,y$ be non-zero vectors which are isosceles orthogonal. Let $v = x-y$ and $w = x+y$. Since $||v|| = ||w||$, it follows that $v$ and $w$ are points of the same circle centered at the origin, and this yields that the measure of the angle $\wk\mathbf{vw(-v)}$ equals $\frac{\pi}{2}$. In other words, $\mathrm{ang_{\mu}}(v-w,-v-w) = \frac{\pi}{2}$. Now,
\begin{align*} \frac{\pi}{2} = \mathrm{ang_{\mu}}(v-w,-v-w) = \mathrm{ang_{\mu}}(-2y,-2x) = \mathrm{ang_{\mu}}(x,y). \end{align*}
For the converse, let \textbf{(ii)} hold and fix linearly independent vectors $v,w \in S$. Clearly, $(w+v) \dashv_I (w-v)$, and hence $\mathrm{ang_{\mu}}(w+v,w-v) = \frac{\pi}{2}$.  This finishes the proof, since $\mathrm{ang_{\mu}}(w+v,w-v)$ equals the measure of the angle $\wk\mathbf{vw(-v)}$.
\end{proof}

It is easy to see that any I-measure $\mu$ satisfies \textbf{(i)} (Thales' theorem) and consequently \textbf{(ii)}. Next we see how Birkhoff orthogonality behaves with respect to I-measures.

\begin{lemma} Let $x ,y \in X_o$ be Birkhoff orthogonal vectors, and assume that $\mu$ is an I-measure in $S$. Then $\mathrm{ang_{\mu}}(x,y) = \frac{\pi}{2}$.
\end{lemma}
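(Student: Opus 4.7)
The plan is to combine the Thales-type identity from Lemma \ref{lemma29}(i) with a limiting argument along $S$. This argument will force $S$ to be smooth at $x$, which in turn identifies the direction $y$ as the (now unique up to sign) tangent direction at $x$, for which the angle with $\pm x$ has already been evaluated.

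By the homogeneity of both $\dashv_B$ and $\mathrm{ang}_\mu$, we may assume $\|x\| = \|y\| = 1$. Since $S$ bounds a convex body, one-sided tangents at $x$ exist; choose sequences $(p_n^+), (p_n^-) \subset S \setminus \{x\}$ approaching $x$ along the two arcs of $S$ through $x$, and set
\begin{equation*}
u_{\pm} \;=\; \lim_{n \to \infty} \frac{x - p_n^{\pm}}{\|x - p_n^{\pm}\|} \in S.
\end{equation*}
Lemma \ref{lemma29}(i) applied with $v = x$ and $w = p_n^{\pm}$ gives $\mathrm{ang}_\mu(x - p_n^{\pm}, -x - p_n^{\pm}) = \pi/2$ for all sufficiently large $n$. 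Normalizing each entry via homogeneity and passing to the limit using the continuity of $\mathrm{ang}_\mu$ (Lemma \ref{lemma10s}), we obtain
\begin{equation*}
\mathrm{ang}_\mu(u_{+}, -x) \;=\; \mathrm{ang}_\mu(u_{-}, -x) \;=\; \frac{\pi}{2}.
\end{equation*}

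Now $u_{+}$ and $u_{-}$ lie on opposite arcs of $S \setminus \{x, -x\}$, because the normalized tangent limit from either side of $x$ is a unit vector in the half of the plane separated from $p_n^{\pm}$ by the line $\left<x, -x\right>$. Hence the two arcs of $S$ joining $u_{+}$ to $u_{-}$ pass respectively through $x$ and through $-x$. By additivity of the Borel measure $\mu$ (together with $\mu(\{v\}) = 0$), the arc through $-x$ has $\mu$-measure $\pi/2 + \pi/2 = \pi$, so the complementary arc also has $\mu$-measure $\pi$, giving $\mathrm{ang}_\mu(u_{+}, u_{-}) = \pi$. Parallelism (property 6, which holds for $\mathrm{ang}_\mu$ by Lemma \ref{properties}) then forces $u_{-} = -u_{+}$, so the two one-sided tangent limits coincide up to sign; that is, $B$ is smooth at $x$.

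Since $x$ is smooth and $x \dashv_B y$, the right Birkhoff-orthogonal direction at $x$ is unique and equals (up to scalar) $u_{+}$; hence $y = \lambda u_{+}$ for some $\lambda \neq 0$. Supplementarity (property 7) converts $\mathrm{ang}_\mu(u_{+}, -x) = \pi/2$ into $\mathrm{ang}_\mu(u_{+}, x) = \pi/2$, and opposite invariance (property 8) gives $\mathrm{ang}_\mu(-u_{+}, x) = \mathrm{ang}_\mu(u_{+}, -x) = \pi/2$; either way, $\mathrm{ang}_\mu(\lambda u_{+}, x) = \pi/2$ by homogeneity, and symmetry of $\mathrm{ang}_\mu$ yields $\mathrm{ang}_\mu(x, y) = \pi/2$. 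The main obstacle is the rigorous justification of the existence of the one-sided limits $u_{\pm}$ together with the joint continuity needed to pass through Lemma \ref{lemma10s} at the limit; both follow from the convexity of $B$ (providing one-sided differentiability of an arc-length parametrization of $S$) and the non-degeneracy hypothesis (iv) on $\mu$.
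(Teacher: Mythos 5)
Your argument is correct and runs on the same engine as the paper's proof: apply the Thales property (Lemma \ref{lemma29}, part (i)) to inscribed triangles with one vertex tending to $x$, and pass to the limit using continuity of $\mathrm{ang_{\mu}}$ (Lemma \ref{lemma10s}). Where you genuinely differ is in how the limiting chord direction gets identified with $y$. The paper writes $\mathrm{ang_{\mu}}(x_n-x,-x)\rightarrow\mathrm{ang_{\mu}}(x,y)$, which tacitly assumes the normalized chords converge to the direction of $y$; that is automatic when $S$ is smooth at $x$ (or when $y$ happens to be a one-sided tangent direction approached from the right side), but not when $x$ is a corner point and $y$ is a supporting direction strictly between the two semitangents. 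You close exactly this case: taking limits from both sides gives $\mathrm{ang_{\mu}}(u_{\pm},-x)=\pi/2$, whence $\mathrm{ang_{\mu}}(u_{+},u_{-})=\pi$, and parallelism forces $u_{-}=-u_{+}$, i.e.\ smoothness at $x$; then $y$ must be parallel to $u_{+}$ and the value $\pi/2$ follows from supplementarity, opposite invariance and homogeneity. The price is that you invoke parallelism, hence the non-degeneracy hypothesis (iv) (the remark after the theorem notes Fankh\"{a}nel's original argument does without it); the gain is a proof that works verbatim at non-smooth points, which the paper's shorter argument glosses over. One detail you should make explicit in the additivity step: you need the $\pi/2$-arcs from $u_{+}$ to $-x$ and from $u_{-}$ to $-x$ to be the arcs avoiding $x$. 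This does hold: by property (ii) each of the two arcs from $x$ to $-x$ has measure $\pi$, so the arc from $u_{\pm}$ to $-x$ containing $x$ has measure at least $\pi>\pi/2$ and cannot be the smaller one. With that line inserted, your proof is complete.
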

\begin{proof} The proof is a continuity argument. Let $x \in S$ be an arbitrary point and assume that $x+y \in X\setminus\{x\}$ is a point of the supporting line to $S$ at $x$ (i.e., $x \dashv_B y$). Let $x_n \in S$ be a sequence of unit vectors such that $x_n \rightarrow x$. Hence we have that $\mathrm{ang_{\mu}}(x_n-x,-x) \rightarrow \mathrm{ang_{\mu}}(x,y)$. On the other hand, it is clear that $\mathrm{ang_{\mu}}(x,x_n) = 0$, and since $\mu$ is an I-measure we have $\mathrm{ang_{\mu}}(x+x_n,x-x_n) = \frac{\pi}{2}$. It follows that $\mathrm{ang_{\mu}}(-x,y) = \frac{\pi}{2}$. The proof is complete.

\end{proof}

\begin{teo} Let $(X,||\cdot||)$ be a normed plane endowed with an I-measure $\mu$. Then the norm is Euclidean, and $\mu$ is the standard Euclidean angle measure.
\end{teo}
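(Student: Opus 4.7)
The plan is to prove the theorem in two stages: first show that the norm must be Euclidean, and then identify the measure $\mu$.

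For the first stage I would combine the preceding lemma with Lemma~\ref{lemma29}: together they imply that $\mathrm{ang_{\mu}}(x,y)=\pi/2$ whenever $x\dashv_B y$ or $x\dashv_I y$. The key observation is that, for each fixed $x\in X_o$, the map $t\mapsto\mathrm{ang_{\mu}}(x,p(t))$ (where $p$ parametrizes an arc of $S$ from $x/\|x\|$ to $-x/\|x\|$) is continuous by Lemma~\ref{lemma10s}, runs from $0$ to $\pi$, and is \emph{strictly} increasing because of the non-degeneracy assumption~(iv). Hence exactly one unoriented direction on $S$ has $\mu$-angle $\pi/2$ with $x$. Therefore any vector $y$ with $x\dashv_I y$ must be parallel to the (unique) Birkhoff-orthogonal direction at $x$, so $x\dashv_I y$ implies $x\dashv_B y$. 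By the classical characterization recalled in the proof of Theorem~\ref{angleinnerproduct} (see \cite{alonso}), the norm is Euclidean.

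For the second stage, fix a Euclidean unit vector $x_0$, let $R_\theta$ denote rotation by $\theta$, and define $g(\theta)$ to be the $\mu$-measure of the counterclockwise arc from $x_0$ to $R_\theta x_0$. Then $g\colon[0,2\pi]\to[0,2\pi]$ is continuous and strictly increasing, the measure symmetry $\mu(-A)=\mu(A)$ gives $g(\theta+\pi)=g(\theta)+\pi$, and the Birkhoff case of the previous stage gives $g(\pi/2)=\pi/2$. I would then apply the I-measure hypothesis to the isosceles triangle $\wk R_\alpha x_0\,o\,R_\beta x_0$ (with $0\le\alpha<\beta<\alpha+\pi$): the apex angle at $o$ equals $g(\beta)-g(\alpha)$, while a brief trigonometric computation shows that the chord $R_\beta x_0-R_\alpha x_0$ points in the Euclidean direction $R_{(\alpha+\beta)/2+\pi/2}x_0$ and, using $g(\cdot+\pi)=g(\cdot)+\pi$, the two base angles under $\mu$ simplify to $g(\alpha)+\pi-g((\alpha+\beta)/2+\pi/2)$ and $g((\alpha+\beta)/2+\pi/2)-g(\beta)$, respectively. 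Equating them yields
\begin{align*}
2\,g\!\left(\frac{\alpha+\beta}{2}+\frac{\pi}{2}\right)=g(\alpha)+g(\beta)+\pi.
\end{align*}
Setting $\gamma=(\alpha+\beta)/2$, $t=(\beta-\alpha)/2$, the left-hand side is independent of $t$, so $g(\gamma+t)+g(\gamma-t)=2g(\gamma)$, a Jensen equation. Continuity then makes $g$ affine, and the boundary values $g(0)=0$ and $g(\pi)=\pi$ force $g(\theta)=\theta$. Hence $\mu$ is the standard Euclidean angle measure.

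The main obstacle is the second stage: one has to extract a functional equation from the I-measure hypothesis \emph{without} presupposing any rotation-invariance of $\mu$. The device that makes this work is the identity $g(\theta+\pi)=g(\theta)+\pi$ coming from $\mu(-A)=\mu(A)$; it cancels the $\pi/2$-shifts appearing in the base-angle directions and converts the equal-base-angles condition into a genuine Jensen equation for $g$, after which continuity alone pins $g$ down.
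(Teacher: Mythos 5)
Your proposal is correct, and its first stage is essentially the paper's own argument: both combine the preceding lemma with Lemma \ref{lemma29} to get $\mathrm{ang_{\mu}}(x,y)=\frac{\pi}{2}$ for Birkhoff- as well as isosceles-orthogonal pairs, and then use non-degeneracy (strict monotonicity of $t\mapsto\mathrm{ang_{\mu}}(x,p(t))$ along the circle, via Lemma \ref{lemma10s}) to conclude that isosceles orthogonality implies Birkhoff orthogonality, which characterizes inner product spaces. Where you genuinely diverge is the identification of $\mu$. The paper settles this in one sentence by a dyadic bisection argument: the equal-base-angle property forces the $\mu$-bisector of a central isosceles angle to coincide with the Euclidean bisector, so $\mu$ assigns the correct value to every $2^{-k}$-multiple of the straight angle, and $\sigma$-additivity together with the absence of atoms extends this to all arcs. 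You instead encode $\mu$ in the cumulative function $g$, use $\mu(-A)=\mu(A)$ to obtain $g(\theta+\pi)=g(\theta)+\pi$, and convert the equal-base-angle condition for central isosceles triangles into the Jensen equation $g(\gamma+t)+g(\gamma-t)=2g(\gamma)$, which with continuity and the boundary values forces $g(\theta)=\theta$; your computation of the chord direction and of the two base angles is correct, and letting $t\to0^{+}$ legitimately eliminates the $t$-independent left-hand side. What your route buys is an explicit, self-contained functional-equation argument that avoids the induction over dyadic levels and makes transparent exactly where the symmetry $\mu(-A)=\mu(A)$ enters (it gives the half-circle measure $\pi$ and cancels the $\frac{\pi}{2}$-shifts); the paper's route is shorter but leaves the bisection induction and the limiting step to the reader. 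Two minor points worth spelling out in a final write-up: extend $g$ to $\mathbb{R}$ by $g(\theta+2\pi)=g(\theta)+2\pi$ so the shift identity and the Jensen equation hold without boundary bookkeeping, and note at the end that agreement of $\mu$ with arc length on all arcs determines $\mu$ on all Borel subsets of $S$, since arcs form a generating $\pi$-system.
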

\begin{proof} Assume that $x,y \in X_o$ are such that $x \dashv_I y$. Thus, we have $\mathrm{ang_{\mu}}(x,y) = \frac{\pi}{2}$. Let $z \in X_o$ be such that $||z|| = ||x||$ and $x \dashv_B z$. We have also $\mathrm{ang_{\mu}}(x,z) = \frac{\pi}{2}$. Since $\mu$ is non-degenerate, we have $y = z$ or $y = -z$. Hence isosceles orthogonality implies Birkhoff orthogonality. This characterizes the Euclidean plane. \\

We have to prove now that $\mu$ is the standard Euclidean angular measure. For this sake we just have to notice that $\mu$ preserves the standard measure for any $2^{-k}$-multiple of the straight angle, and to use the $\sigma$-additivity of the measure.

\end{proof}

\begin{remark} The non-degeneracy of $\mu$ can be dropped. In fact, the original proof of Fankh\"{a}nel assumes that $\mu$ is defined in the sense of Brass.
\end{remark}

\subsection{B-measures and T-measures}

The B-measures and T-measures are defined to be angle measures preserving Birkhoff and isosceles orthogonality, respectively. This means that $\mathrm{ang_{\mu}} = \frac{\pi}{2}$ whenever $x \dashv_B y$ and $x \dashv_I y$, respectively. About these types of angle measures Fankh\"{a}nel \cite{fankhaneldissertation} proved the following theorems.

\begin{teo} An arc $R$ of the unit circle of a normed plane is called a Radon arc whenever the following implication holds: if $x \in R$, $y \in S$ and $x \dashv_B y$, then $y \dashv_B x$. A normed plane whose unit circle contains a Radon arc admits a B-measure.
\end{teo}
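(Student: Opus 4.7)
The plan is to construct the B-measure $\mu$ as an absolutely continuous measure on $S$ with a positive continuous density, and to use the Radon arc $R$ to pin down that density via a cocycle equation. Parametrize $S$ by $p \colon \mathbb{R}/L\mathbb{Z} \to S$ using Euclidean arc length so that $p(t + L/2) = -p(t)$, and lift the Birkhoff-orthogonality map $\psi \colon S \to S$ (with a fixed orientation of the tangent line) to $\phi \colon \mathbb{R}/L\mathbb{Z} \to \mathbb{R}/L\mathbb{Z}$ via $\psi(p(t)) = p(\phi(t))$. Let $I$ be the parametric interval for $R$. The defining property of a Radon arc translates into the involutive identity $\phi\circ\phi \equiv \mathrm{id} + L/2 \pmod{L}$ on $I$, and also on $\phi(I)$, $-I$, $-\phi(I)$ by the symmetries of the construction.

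Seek $\mu$ of the form $d\mu = \rho(t)\,dt$ with $\rho > 0$ continuous. The Brass axioms become the central symmetry $\rho(t + L/2) = \rho(t)$ together with the normalization $\int_0^L \rho\,dt = 2\pi$, while the B-measure condition $\mu(\text{arc from }p(t)\text{ to }p(\phi(t))) = \pi/2$ becomes $\int_t^{\phi(t)} \rho(s)\,ds = \pi/2$, which upon differentiation is equivalent to the cocycle relation $\rho(\phi(t))\,\phi'(t) = \rho(t)$.

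I would construct $\rho$ in three stages. First, choose $\rho|_I$ freely as any positive continuous function (for concreteness, a scalar multiple of the antinorm arc-length density on $R$). Second, define $\rho$ on $\phi(I)$ by the cocycle $\rho(\phi(t)) := \rho(t)/\phi'(t)$; the Radon identity $\phi^{2} = \mathrm{id} + L/2$ on $I$ ensures that iterating this prescription returns the original $\rho|_I$, so it is consistent. The centrally symmetric copies $-I$ and $-\phi(I)$ are then filled in by $\rho(t + L/2) := \rho(t)$, and on their union with $I\cup \phi(I)$ the cocycle and the symmetry hold automatically. Third, extend $\rho$ to the complement $S \setminus (R \cup \psi(R) \cup -R \cup -\psi(R))$ (if any) by picking any positive continuous density on a fundamental domain of the $\mathbb{Z}$-action generated by $\phi$ and $+L/2$ on that complement, and propagating along orbits by the cocycle and central symmetry. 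A final global rescaling achieves $\int \rho\,dt = 2\pi$, at which point $\int_t^{\phi(t)} \rho\,ds$ is constant in $t$ and equal to $\pi/2$, so $\mu$ is a B-measure, and positivity of $\rho$ yields the non-degeneracy hypothesis.

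The principal obstacle is the cocycle extension off the Radon block. On $R$ itself the involutive structure $\phi^{2} = \mathrm{id} + L/2$ makes consistency automatic; off $R$, where $\phi$ need not be involutive, one must check that the orbits of the semigroup generated by $\phi$ and $+L/2$ foliate the complement nicely enough for the cocycle to be solvable. Under smoothness and strict convexity of $S$ this is a routine ODE/functional-equation argument (choose $\rho$ on a transversal to the orbits and integrate). In the non-smooth or non-strictly-convex case the map $\phi$ becomes set-valued and one handles this by an approximation of $\|\cdot\|$ by smooth, strictly convex norms whose unit circles still contain a Radon arc close to $R$, passing to the limit of the constructed densities.
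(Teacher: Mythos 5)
There is a genuine obstruction to your approach, and it is exactly at the point you flag as "the principal obstacle": a B-measure with an everywhere positive density cannot exist unless the \emph{whole} plane is Radon, so your stage three is impossible in general. Indeed, suppose $d\mu=\rho\,dt$ with $\rho>0$ continuous, $\rho(t+L/2)=\rho(t)$, $\int_0^L\rho=2\pi$, and $\int_t^{\phi(t)}\rho=\pi/2$ for \emph{all} $t$ (which is what the B-measure condition demands, since it must hold for every Birkhoff orthogonal pair, not only those coming from $R$; note also that positivity of $\rho$ already forces $\phi$ to be single-valued, i.e.\ the circle to be smooth, because at a corner the condition would have to hold for a whole arc of partners $y$ while $y\mapsto\int_x^y\rho$ is strictly increasing). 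Then $\int_t^{\phi^2(t)}\rho=\pi$, while central symmetry of $\rho$ gives $\int_t^{t+L/2}\rho=\pi$; since $s\mapsto\int_t^s\rho$ is strictly increasing, this forces $\phi^2(t)=t+L/2$ for every $t$, i.e.\ Birkhoff orthogonality is symmetric everywhere and the plane is Radon. Under the hypothesis of the theorem the plane need only contain a Radon \emph{arc}, so there is a $t$ off the Radon block where $\phi^2(t)\neq t+L/2$, and no positive continuous $\rho$ can satisfy your cocycle there: the orbit/foliation difficulty is not a technicality to be handled by an ODE argument or by smooth approximation, it is an impossibility. (The same computation shows your final claim, that positivity of $\rho$ delivers D\"uvelmeyer's non-degeneracy axiom (iv), is a warning sign: a measure satisfying (iv) and the B-condition exists only in Radon planes, so the theorem must be read for Brass measures in the original sense, axioms (i)--(iii), just as in the remark made after the I-measure theorem.)

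The construction has to go the opposite way: let the measure \emph{vanish} off the Radon block. After shrinking $R$ so that $R$, $\phi(R)$, $-R$, $-\phi(R)$ are pairwise disjoint (a subarc of a Radon arc is again a Radon arc), put any non-atomic measure of total mass $\pi/2$ on $R$, define $\mu$ on $\phi(R)$ as its pushforward under the (monotone) orthogonality correspondence, extend to $-R$ and $-\phi(R)$ by central symmetry, and set $\mu=0$ on the four gaps. The identity $\phi^2=\mathrm{id}+L/2$ on $R$ (your correct observation) makes this consistent, and a short case analysis using the monotonicity of $\phi$ shows that for every $x\in S$ the oriented arc from $x$ to $\phi(x)$ picks up exactly one of the four blocks (in part or in total, compensated by the pushforward relation), hence has measure $\pi/2$; this is the line of Fankh\"anel's argument, which the present paper only cites and does not reproduce. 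So the key missing idea in your proposal is that the freedom you need lies precisely in allowing $\mu$ to degenerate outside $R\cup\phi(R)\cup-R\cup-\phi(R)$, not in solving the cocycle globally with a positive density.
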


\begin{teo} A normed plane has a T-measure if and only if it is Euclidean.
\end{teo}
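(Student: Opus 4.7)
The ``if'' direction is immediate: in a Euclidean plane, isosceles orthogonality coincides with the standard orthogonality, so the usual Euclidean angle measure is a T-measure. The substantive content is the ``only if'' direction, which I approach by showing that the existence of a T-measure forces $\dashv_I$ to coincide with $\dashv_B$, a classical characterization of inner product spaces.

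So let $\mu$ be a T-measure. The T-measure property is exactly condition (ii) of Lemma \ref{lemma29}, hence condition (i) of that lemma also holds; in particular $\mathrm{ang_{\mu}}(v-w,-v-w) = \pi/2$ for all linearly independent $v,w \in S$. Using this I adapt the Birkhoff-orthogonality lemma that appears just before the I-measure theorem. Given $x \in S$ and $y \in X_o$ with $x \dashv_B y$, pick a sequence $x_n \in S \setminus \{x\}$ with $x_n \to x$. Since $||x|| = ||x_n||$, the vectors $x+x_n$ and $x-x_n$ are isosceles orthogonal, and the T-measure property therefore yields $\mathrm{ang_{\mu}}(x+x_n, x-x_n) = \pi/2$. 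Passing to the limit via homogeneity and the continuity of $\mathrm{ang_{\mu}}$ (Lemma \ref{lemma10s}), and using that $(x+x_n)/||x+x_n|| \to x$ while $(x-x_n)/||x-x_n||$ converges to a unit tangent direction $\pm y/||y||$ at $x$, a brief supplementarity calculation gives $\mathrm{ang_{\mu}}(x,y) = \pi/2$.

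The argument is then finished by the non-degeneracy of $\mu$. For fixed $x \in S$, the map $y \mapsto \mathrm{ang_{\mu}}(x,y)$ restricted to either of the two arcs of $S$ joining $x$ to $-x$ is continuous and, by the positivity of $\mu$ on non-degenerate arcs, strictly monotonic from $0$ to $\pi$; hence exactly one unit vector on each arc satisfies $\mathrm{ang_{\mu}}(x,y) = \pi/2$, and the symmetry $\mu(-A)=\mu(A)$ forces these two vectors to be antipodal. By the two previous paragraphs, every isosceles orthogonal direction and every Birkhoff orthogonal direction from $x$ lies in this antipodal pair, so $\dashv_I$ and $\dashv_B$ coincide on $X_o$, and the norm is Euclidean.

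The only delicate step is the adapted Birkhoff lemma, where one must choose the side from which $x_n$ approaches $x$ and apply supplementarity to sort out signs. The calculation is essentially the same one already used in the proof of the I-measure theorem, so no genuinely new ideas are required.
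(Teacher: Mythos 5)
The paper itself gives no proof of this statement: it is quoted as a theorem of Fankh\"anel, with only a citation to his dissertation. So there is nothing to compare line by line; what you have done is transplant the paper's proof of the preceding I-measure theorem, and that is exactly the right move, because that argument never uses the full I-measure hypothesis --- it only uses property (ii) of Lemma \ref{lemma29}, which is precisely the defining property of a T-measure. Your three steps (isosceles-orthogonal pairs receive angle $\frac{\pi}{2}$ by definition; a Birkhoff-orthogonal pair receives angle $\frac{\pi}{2}$ by the chord-limit argument with continuity from Lemma \ref{lemma10s}; non-degeneracy plus the symmetry $\mu(A)=\mu(-A)$ confine the $\frac{\pi}{2}$-directions from a given $x$ to a single antipodal pair, so that isosceles orthogonality implies Birkhoff orthogonality and the plane is Euclidean) reproduce that template correctly, and the ``if'' direction is as trivial as you say.

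Two caveats. First, your claim that \emph{every} Birkhoff-orthogonal direction from $x$ lies in the antipodal pair is more than the chord-limit argument yields: for $x_n\rightarrow x$ from a fixed side, $(x-x_n)/||x-x_n||$ converges to a one-sided tangent direction at $x$, which need not be parallel to an arbitrary $y$ with $x\dashv_B y$ if $x$ is a non-smooth point of the unit circle. This is harmless, because the conclusion only needs \emph{one} Birkhoff-orthogonal direction $z$ with $\mathrm{ang_{\mu}}(x,z)=\frac{\pi}{2}$ (take $z$ along that one-sided tangent, exactly as in the paper's I-measure proof): then any $y$ with $x\dashv_I y$ must be a multiple of $\pm z$, hence $x\dashv_B y$, and ``isosceles implies Birkhoff'' already characterizes inner product spaces. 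You should phrase it this way rather than asserting the unproved stronger claim. Second, your monotonicity step uses the non-degeneracy axiom (iv); this is consistent with the paper's conventions (it is the same use made in the I-measure theorem), but note the remark following that theorem: Fankh\"anel's original proof works for measures in the sense of Brass without (iv), so your argument proves the theorem only for the restricted class of angle measures adopted in this paper.
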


\section{Further notions}

\subsection{Wilson angles}

Wilson \cite{wilson1932relation} extended angle concepts to arbitrary metric spaces, and Valentine and Andalafte (\cite{valentine1971wilson}) studied these concepts in real normed spaces, obtaining a characterization of inner product spaces by a local property. This section is devoted to give a simpler proof to the main result of \cite{valentine1971wilson}, which states that \emph{the Wilson angle can only be well defined in a normed space if its norm is derived from an inner product}.\\

Given a metric space $(M,d)$, Wilson's definition can be stated as a function $\wk_\mathrm{w}:\{x,y,z\in M: x,y,z \ \mathrm{are \ mutually \ distinct} \}\rightarrow \mathbb{R}$ given by
\begin{align*} \wk_{\mathrm{w}}(x,y,z) = \mathrm{arccos}\left(\frac{d(x,y)^2+d(y,z)^2-d(x,z)^2}{2d(x,y)d(y,z)} \right).\end{align*}

Using this ``3-point angle" definition, one may define the angle between two metric rays. Indeed, if $r,s\subseteq M$ are two metric rays with the same origin $y$, then we define the angle $\mathrm{ang_w}$ between them to be
\begin{align*} \mathrm{ang_w}(r,s) = \lim_{x,z\rightarrow y}\wk_{\mathrm{w}}(x,y,z), \end{align*}
where $x \rightarrow y$ through the ray $r$ and $z \rightarrow y$ through $s$. This limit may not exist, and we deal from now on with a normed space $(X,||\cdot||)$ which has the property that it exists for any pair of rays with common initial point. For simplicity, we will refer to this property as the \textit{Wilson property}. In this case, we can easily see that the angle between rays is translation invariant, and the rays can be replaced by non-zero vectors. Formally, we define the function $\mathrm{ang_w}:X_{o}\times X_{o} \rightarrow \mathbb{R}$ to be
\begin{align*} \mathrm{ang_w}(x,y) = \lim_{p,q\rightarrow o}\wk_{\mathrm{w}}(p,o,q) = \lim_{p,q\rightarrow o}\mathrm{arccos}\left(\frac{||p||^2+||q||^2-||p-q||^2}{2||p||\cdot||q||} \right),\end{align*}
where $p$ ranges through $\left.[o,x\right>$ and $q \in \left.[o,y\right>$. It is easy to see that the angle function $\mathrm{ang_w}$ defined this way is homogeneous.

\begin{lemma} In a normed space with the Wilson property we have $\mathrm{ang_w}(x,y) = \mathrm{ang_p}(x,y)$ for every $x,y \in X_{o}$.
\end{lemma}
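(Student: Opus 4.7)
The plan is to exploit the Wilson property (existence of the joint limit) together with positive homogeneity of the norm. Since the limit
\[
\lim_{p,q\to o}\frac{\|p\|^2+\|q\|^2-\|p-q\|^2}{2\|p\|\cdot\|q\|}
\]
is assumed to exist for $p\in[o,x\rangle$ and $q\in[o,y\rangle$, I may compute it along any convenient subpath and be sure that the answer coincides with the full limit.

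The computation I would carry out is the diagonal one: take $p=sx$ and $q=sy$ with $s\to 0^+$. This is a legitimate choice since $p$ moves along $[o,x\rangle$, $q$ moves along $[o,y\rangle$, and both tend to $o$. Under this choice, positive homogeneity gives $\|p\|=s\|x\|$, $\|q\|=s\|y\|$, and $\|p-q\|=\|sx-sy\|=s\|x-y\|$. Substituting into the numerator and denominator, every term carries a factor of $s^2$, which cancels, leaving the constant expression
\[
\frac{\|x\|^2+\|y\|^2-\|x-y\|^2}{2\|x\|\cdot\|y\|}.
\]
This is precisely the argument of the $\mathrm{arccos}$ that defines $\mathrm{ang_p}(x,y)$.

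Since this diagonal subfamily is constant in $s$, its limit as $s\to 0^+$ is this same value; and because the joint limit exists (Wilson property), the limit of the fraction along any approach — in particular the joint limit — must equal this common value. Finally, applying continuity of $\mathrm{arccos}$ on $[-1,1]$, I conclude
\[
\mathrm{ang_w}(x,y)=\mathrm{arccos}\!\left(\frac{\|x\|^2+\|y\|^2-\|x-y\|^2}{2\|x\|\cdot\|y\|}\right)=\mathrm{ang_p}(x,y).
\]

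There is essentially no obstacle: the only subtle point is the logical one that I am \emph{assuming} the joint limit exists and then merely evaluating it along a convenient path; if I had to show existence of the limit first, I would need genuine work (and the conclusion would fail outside of inner product spaces, since $\mathrm{ang_p}$ is not always additive while $\mathrm{ang_w}$ must be if it exists). But the Wilson property is built into the hypotheses, so this routine scaling argument suffices.
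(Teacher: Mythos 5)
Your proposal is correct and follows essentially the same route as the paper: the paper also evaluates the Wilson limit along the diagonal path $p=tx$, $q=ty$ with $t>0$, uses homogeneity to see the fraction is constant, and concludes via continuity of $\mathrm{arccos}$. If anything, you are more explicit than the paper about the logical point that the assumed existence of the joint limit is what licenses evaluating it along a single subpath.
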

\begin{proof} Clearly, we can write
\begin{align*} \mathrm{ang_w}(x,y) = \lim_{t\rightarrow 0}\mathrm{arccos}\left(\frac{||tx||^2+||ty||^2-||tx-ty||^2}{2||tx||\cdot||ty||}\right) = \\ = \mathrm{arccos}\left(\frac{||x||^2+||y||^2-||x-y||^2}{2||x||\cdot||y||}\right) = \mathrm{ang_p}(x,y),  \end{align*}
where we approximate considering that $t > 0$.

\end{proof}

\begin{coro}\label{wilsoninner} A normed space has the Wilson property if and only if it is an inner product space.
\end{coro}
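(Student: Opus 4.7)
The plan is to leverage the preceding lemma, which shows $\mathrm{ang_w}=\mathrm{ang_p}$ on any space with the Wilson property, together with Proposition \ref{homogeneitypangle}, which characterizes exactly the norms for which $\mathrm{ang_p}$ is homogeneous.

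First I would handle the forward direction. Assume $(X,\|\cdot\|)$ has the Wilson property, so that $\mathrm{ang_w}$ is well defined on all of $X_{o}\times X_{o}$. The paragraph preceding the lemma already records that $\mathrm{ang_w}$ is homogeneous: positive rescaling of $x$ or $y$ leaves the rays from $o$ in the directions of $x$ and $y$ setwise invariant, so the limit defining $\mathrm{ang_w}(x,y)$ is unchanged. The lemma then yields $\mathrm{ang_p}=\mathrm{ang_w}$, so the P-angle is homogeneous, and Proposition \ref{homogeneitypangle} immediately forces the norm to be induced by an inner product.

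For the converse, suppose $\|\cdot\|$ arises from an inner product $(\cdot,\cdot)$. The polarization identity reduces the three-point cosine to
\[
\frac{\|p\|^{2}+\|q\|^{2}-\|p-q\|^{2}}{2\|p\|\cdot\|q\|}=\frac{(p,q)}{\|p\|\cdot\|q\|},
\]
which depends only on the directions of $p$ and $q$. Hence along any pair of rays from $o$ in the directions of $x$ and $y$ the quantity $\wk_{\mathrm{w}}(p,o,q)$ is actually constant, and the defining limit exists trivially, so the Wilson property holds.

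The only slightly delicate point is the homogeneity of $\mathrm{ang_w}$ used in the forward direction, which the authors assert without proof above the lemma; this is immediate from the ray-invariance observation just made. The corollary is thus essentially a packaging of the preceding lemma together with Proposition \ref{homogeneitypangle}.
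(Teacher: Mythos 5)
Your proposal is correct and follows essentially the same route as the paper: ray-invariance gives homogeneity of $\mathrm{ang_w}$, the preceding lemma identifies $\mathrm{ang_w}$ with $\mathrm{ang_p}$, and Proposition \ref{homogeneitypangle} then forces the norm to come from an inner product. The only difference is that you spell out the (trivial) converse via the polarization identity, which the paper leaves implicit.
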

\begin{proof} Since $\mathrm{ang_w}$ is homogeneous and $\mathrm{ang_w} = \mathrm{ang_p}$, we get the result from Proposition \ref{homogeneitypangle}.

\end{proof}

\subsection{The Diminnie-Andalafte-Freese angle} \label{d-a-f angle}

In \cite{D-A-F} the authors define an angle  function $\mathrm{ang}:X_o \times X_o \rightarrow \mathbb{R}$ by
\begin{align*} \mathrm{ang}(x,y) = \mathrm{arccos}\left(1-\frac{1}{2}\left|\left|\frac{x}{||x||}-\frac{y}{||y||}\right|\right|^2\right).
\end{align*}

For simplicity, we will call this the \textit{D-A-F angle}. It is based on the Euclidean cosine law for isosceles triangles whose equal sides are unit. Of course, any angle in Euclidean space can be obtained in this way. Regarding the structural axioms, it is clear that such an angle function satisfies 1 (continuity), 2 (symmetry), 3 (homogeneity), and 5 (non-degeneracy). The positional property 8 (opposite invariance) also holds. These axioms and properties will be used throughout this subsection with no further comments. Regarding the structure of this angle function, we also have (in the spirit of Theorem \ref{thureytheorem})

\begin{prop}\label{propdafangle} Let $x,y \in X_o$ be unit independent vectors. Then we have \\

\normalfont
\noindent\textbf{(a)} \textit{$\lim_{t\rightarrow-\infty}\mathrm{ang}(y,x+ty) = \pi$.}\\

\noindent\textbf{(b)} \textit{$\lim_{t\rightarrow+\infty}\mathrm{ang}(y,x+ty) = 0$.}\\

\noindent\textbf{(c)} \textit{For any $k \in (0,\pi)$ there exists an $\alpha \in\mathbb{R}$ such that $\mathrm{ang}(y,x+\alpha y) = k$.}\\

\textit{In other words, the map $t\mapsto \mathrm{ang}(y,x+ty)$ is a non-increasing surjective function from $\mathbb{R}$ onto $(0,\pi)$. Moreover, if $(X,||\cdot||)$ is strictly convex, then this map is a decreasing homeomorphism.}

\end{prop}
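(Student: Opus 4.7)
The plan is to reduce everything to the curve $u(t) := (x+ty)/\|x+ty\|$ on the unit sphere. By homogeneity of $\mathrm{ang}$ (structural axiom 3) and because $\|y\|=1$, we may write
\[
\mathrm{ang}(y,x+ty)\;=\;\arccos\!\left(1-\tfrac{1}{2}\,\|y-u(t)\|^2\right),
\]
and since $u$ is continuous and the norm is continuous, so is $f(t):=\mathrm{ang}(y,x+ty)$. Writing $u(t)=\frac{(x/t)+y}{\|(x/t)+y\|}$ for $t\neq 0$ shows $u(t)\to y$ as $t\to+\infty$ and $u(t)\to -y$ as $t\to-\infty$; plugging these into $\|y-u(t)\|$ gives the values $0$ and $2$ respectively, so $f(t)\to 0$ and $f(t)\to\pi$. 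This yields (a) and (b), and then (c) is immediate from the intermediate value theorem.

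For the non-increasing property I would first show that as $t$ increases, $u(t)$ sweeps one of the two arcs of the unit sphere of $\mathrm{span}(x,y)$ from $-y$ to $y$ monotonically. Restricting to the $2$-dimensional subspace $\mathrm{span}(x,y)$ and picking any linear identification of it with $\mathbb{R}^2$ sending $y\mapsto(1,0)$, the affine line $\{x+ty:t\in\mathbb{R}\}$ avoids the origin (since $x,y$ are independent), and its Euclidean argument decreases strictly and continuously from $\pi$ to $0$ as $t$ runs from $-\infty$ to $+\infty$. Hence $t\mapsto u(t)$ is an injective continuous parametrization of one arc between $-y$ and $y$, traversed in one direction. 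The Monotonicity Lemma (used in the proof of Corollary \ref{thyanglefunction}) states that if $a,b,c$ are unit vectors with $b$ on the arc from $a$ to $c$ not containing $-a$, then $\|a-b\|\leq\|a-c\|$. Applying this with $a=y$ gives that $t\mapsto\|y-u(t)\|$ is non-increasing, so $f$ is non-increasing, and in combination with (a), (b) and continuity it is a surjection onto $(0,\pi)$.

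For the strict convexity case, the Monotonicity Lemma gives a strict inequality unless the relevant arc of the unit sphere contains a nondegenerate line segment, which is forbidden; together with the injectivity of $u$ already noted, this makes $f$ strictly decreasing, hence (with continuity and the endpoint limits) a homeomorphism $\mathbb{R}\to(0,\pi)$. The main obstacle is the monotonicity step: one must justify both the geometric fact that $u(t)$ progresses monotonically along the arc from $-y$ to $y$ and then apply the Monotonicity Lemma in the correct direction; the rest amounts to continuity and an appeal to the intermediate value theorem.
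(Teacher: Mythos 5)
Your proof is correct and takes essentially the same route as the paper's: compute the limits of the normalized vector $\frac{x+ty}{\|x+ty\|}$ (namely $y$ as $t\to+\infty$ and $-y$ as $t\to-\infty$) to get (a) and (b), invoke the Intermediate Value Theorem for (c), and obtain the monotonicity statements from the Monotonicity Lemma applied to the parametrization of the half-circle from $-y$ to $y$ through $x$. The only slip is that your formula $u(t)=\frac{(x/t)+y}{\|(x/t)+y\|}$ requires an extra minus sign when $t<0$; this does not affect the limit $-y$ or anything downstream, and your extra care in justifying that $u$ traverses the arc monotonically (and in handling the equality case under strict convexity) only makes explicit what the paper leaves to the reader.
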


\begin{proof} It is easy to see that $\lim_{t\rightarrow+\infty}\frac{x+ty}{||x+ty||} = y$ and $\lim_{t\rightarrow-\infty}\frac{x+ty}{||x+ty||} = -y$, and this gives \textbf{(a)} and \textbf{(b)}. The assertion \textbf{(c)} comes immediately from the Intermediate Value Theorem. To prove that $t\mapsto\mathrm{ang}(y,x+ty)$ is non-increasing (or decreasing, in the strictly convex case) we notice that $t\mapsto\frac{x+ty}{||x+ty||}$ is a parametrization of the (open) half-circle from $-y$ to $y$, which contains $x$, and use the Monotonicity Lemma (see \cite{martini1}).

\end{proof}

As a first relation between the properties of the D-A-F angle and the structure of the normed space we have the following lemma.\\

\begin{lemma}\label{dafangleparallelism} The D-A-F angle of a normed space $(X,||\cdot||)$ has the positional property 6 (parallelism) if and only if the space is strictly convex.
\end{lemma}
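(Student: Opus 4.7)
The plan is to unwind the definition of the D-A-F angle and reduce property 6 to a statement about the equality case of the triangle inequality on the unit sphere, which is one of the standard characterizations of strict convexity.

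First, I would observe that since $\mathrm{arccos}$ is a bijection from $[-1,1]$ onto $[0,\pi]$, we have $\mathrm{ang}(x,y)=\pi$ if and only if
\[
1-\tfrac12\Bigl\|\tfrac{x}{\|x\|}-\tfrac{y}{\|y\|}\Bigr\|^{2}=-1,
\]
which, writing $\hat x=x/\|x\|$ and $\hat y=y/\|y\|$, is equivalent to $\|\hat x-\hat y\|=2$. One direction of property 6 is automatic: if $y=\alpha x$ with $\alpha<0$, then $\hat y=-\hat x$ and $\|\hat x-\hat y\|=\|2\hat x\|=2$, so $\mathrm{ang}(x,y)=\pi$ regardless of the norm.

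The nontrivial content is the reverse implication $\mathrm{ang}(x,y)=\pi\Longrightarrow y=\alpha x$ with $\alpha<0$. Suppose $(X,\|\cdot\|)$ is strictly convex and $\|\hat x-\hat y\|=2$. Setting $u=\hat x$ and $v=-\hat y$, both of norm $1$, the triangle inequality gives
\[
2=\|\hat x-\hat y\|=\|u+v\|\leq \|u\|+\|v\|=2,
\]
so equality holds. Strict convexity then forces $v=\lambda u$ for some $\lambda>0$, and unit length forces $\lambda=1$, i.e.\ $\hat y=-\hat x$. Hence $y=-(\|y\|/\|x\|)x$, which is the desired negative multiple.

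For the converse, I would argue by contraposition. If $(X,\|\cdot\|)$ is not strictly convex, then by the standard equivalent characterization there exist distinct unit vectors $u,v\in S$ with $\|u+v\|=2$ (equivalently, the segment $[u,v]$ lies on $S$). Put $x=u$ and $y=-v$; then $\hat x=u$, $\hat y=-v$, and
\[
\Bigl\|\hat x-\hat y\Bigr\|=\|u+v\|=2,
\]
so $\mathrm{ang}(x,y)=\pi$. Yet $y=\alpha x$ would require $-v=\alpha u$ with $\alpha<0$, i.e.\ $v=u$, contradicting $u\neq v$. Thus parallelism fails.

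The only mildly delicate point is the invocation of the equality case of the triangle inequality in the first direction; everything else is a direct translation between the analytic form of $\mathrm{ang}$ and the geometry of $S$. Once that equivalence is set up, both directions follow without further computation.
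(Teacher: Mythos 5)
Your proposal is correct and follows essentially the same route as the paper: reduce $\mathrm{ang}(x,y)=\pi$ to $\left\|\frac{x}{\|x\|}-\frac{y}{\|y\|}\right\|=2$ and then invoke the equality case of the triangle inequality, which characterizes strict convexity. You have merely spelled out in detail (including the contrapositive direction with a segment on the unit sphere) what the paper dispatches in one line, so the argument matches.
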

\begin{proof} We have $\mathrm{ang}(x,y) = \pi$ if and only if
\begin{align*} \left|\left|\frac{x}{||x||}-\frac{y}{||y||}\right|\right| = 2,
\end{align*}
and then the desired follows immediately from the triangle inequality.

\end{proof}

Several characterizations of Euclidean spaces in terms of the D-A-F angle were proved by Diminnie, Andalafte and Freese (see \cite{D-A-F,F-D-A}) and by Martini and Wu \cite{Ma-Wu}. Some of them are based on properties of bisectors, and thus we will discuss them in Subsection \ref{bisectors}. The other ones rely on weaker versions of structural axioms and positional properties. We define these weaker versions now, but the reader may notice that we are changing their original names given in \cite[Definition 1.2]{D-A-F}. After that we will prove the mentioned characterizations on the lines of the proof presented in \cite{D-A-F}. \\

\begin{defi}\label{weakerpropoerties} Let $\sim$ denote one of the symbols $\leq$, $=$, or $\geq$.\\

\noindent\textbf{(i)} An angle function has the \textit{weak supplementarity} property if $\mathrm{ang}(x,y) + \mathrm{ang}(-x,y) \sim \pi$ for any $x,y \in X_o$.

\noindent\textbf{(ii)} We say that an angle is \textit{weakly additive} whenever $\mathrm{ang}(x,\alpha x+\beta y) + \mathrm{ang}(\alpha x+\beta y) \sim \mathrm{ang}(x,y)$ for all independent $x,y \in X_o$ and $\alpha,\beta > 0$.

\noindent\textbf{(iii)} An angle $\mathrm{ang}$ is said to have the \textit{angle sum property} if and only if $\mathrm{ang}(x,y)+\mathrm{ang}(y,y-x) + \mathrm{ang}(x,x-y) \sim \pi$ for all independent $x,y \in X_o$.

\noindent\textbf{(iv)} An angle function has the \textit{exterior angle property} when $\mathrm{ang}(y,y-x) + \mathrm{ang}(x,x-y) \sim \mathrm{ang}(-x,y)$ for any independent $x,y \in X_o$.\\
\end{defi}

\begin{teo}\label{teodafangle} Let $(X,||\cdot||)$ be a normed space, and let $\mathrm{ang}:X_o\times X_o\rightarrow\mathbb{R}$ denote its D-A-F angle function. The following are equivalent:\\

\noindent\textbf{(a)} $\mathrm{ang}(\cdot,\cdot)$ has the weak supplementarity property,\\

\noindent\textbf{(b)} $\mathrm{ang}(\cdot,\cdot)$ is weakly additive,\\

\noindent\textbf{(c)} $\mathrm{ang}(\cdot,\cdot)$ has the angle sum property,\\

\noindent\textbf{(d)} $\mathrm{ang}(\cdot,\cdot)$ has the exterior angle property, and\\

\noindent\textbf{(e)} $(X,||\cdot||)$ is an inner product space.\\
\end{teo}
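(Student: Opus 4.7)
The four implications $(e) \Rightarrow (a), (b), (c), (d)$ are immediate: in an inner product space the D-A-F angle coincides with the standard Euclidean angle, and the latter satisfies each of the four relations with equality, and therefore with any direction of $\sim$. The task is the converse chain, which I plan to prove by showing $(a)\Rightarrow (e)$ and reducing each of $(b), (c), (d)$ to $(a)$.

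The heart of the argument is $(a) \Rightarrow (e)$. Writing $u = x/\|x\|$, $v = y/\|y\|$ and applying the half-angle identity $\cos\theta = 1 - 2\sin^2(\theta/2)$ to the defining formula gives $\sin(\mathrm{ang}(x,y)/2) = \|u - v\|/2$. Hence weak supplementarity $\mathrm{ang}(x,y) + \mathrm{ang}(-x,y) \sim \pi$ translates into $\arcsin(\|u-v\|/2) + \arcsin(\|u+v\|/2) \sim \pi/2$, and then, via the elementary identity $\arcsin a + \arcsin b \sim \pi/2 \iff a^2 + b^2 \sim 1$ valid for $a,b\in[0,1]$, into
\[
\|u+v\|^2 + \|u-v\|^2 \;\sim\; 4 \;=\; 2(\|u\|^2 + \|v\|^2)
\]
for every pair of unit vectors $u, v$. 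In the case $\sim\,{=}\,{=}$ this is the parallelogram law on the unit sphere, which by a classical characterization forces the norm to come from an inner product. In the cases $\sim\,{=}\,{\le}$ or $\sim\,{=}\,{\ge}$ the doubling substitution $u \mapsto u+v$, $v \mapsto u-v$ (rescaled back to unit vectors) produces the reverse inequality, so equality is forced and the same conclusion follows.

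For $(b), (c), (d)\Rightarrow (a)$ the plan is to choose test vectors for which the additional terms in each relation collapse, leaving precisely a weak supplementarity statement. In $(b)$, setting $\alpha = \beta$ and invoking homogeneity produces $\mathrm{ang}(x, x+y) + \mathrm{ang}(x+y, y) \sim \mathrm{ang}(x, y)$; iterating this scheme with test pairs of the form $(x,\, -x+2w)$ and using continuity (axiom 1) should recover the $(a)$-form for generic $x, w$. In $(c)$ and $(d)$, normalizing $\|y\|=\|x\|$ and exploiting the fact that the D-A-F angle depends on $(x,y)$ only through the unit vectors $u,v$ lets one rewrite the angle-sum and exterior-angle relations entirely in terms of chord lengths $\|u \pm v\|$; after this rewriting each reduces, via the same half-angle/arcsine manipulation used for $(a)$, to a parallelogram-type (in)equality on the unit sphere, to which the Lorch--Schoenberg argument and the doubling trick apply.

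The main obstacle will be keeping the direction of $\sim$ consistent through these reductions: natural manipulations such as replacing $y$ by $y-x$, rescaling, or passing to limits may flip inequalities, and the doubling trick reverses them by design. The cleanest uniform tactic, following the blueprint of Diminnie--Andalafte--Freese \cite{D-A-F}, is to treat the equality case $\sim\,{=}\,{=}$ of each of $(b), (c), (d)$ first by direct algebraic manipulation of the D-A-F formula to arrive at the parallelogram identity on the unit sphere, and then handle the one-sided variants simultaneously by the doubling substitution used in $(a)$.
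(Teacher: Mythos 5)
Your overall architecture is the paper's: prove (a)$\Rightarrow$(e) by translating weak supplementarity into $\|u+v\|^2+\|u-v\|^2\sim 4$ for unit vectors (the paper does this with the cosine relation $\|u\mp v\|^2=2-2\cos\mathrm{ang}(\pm x,y)$ rather than your arcsine identity, but the endpoint is the same), and reduce (b), (c), (d) back to (a); your sketch of (b)$\Rightarrow$(a), collapsing $\alpha x+\beta y$ and using continuity, is essentially the paper's limit argument $\beta\to 0$. However, two of your concrete steps do not hold up. First, the ``doubling substitution'' for the one-sided cases is invalid: putting $a=\|u+v\|$, $b=\|u-v\|$ and $p=(u+v)/a$, $q=(u-v)/b$, you get $p+q=\left(\tfrac1a+\tfrac1b\right)u+\left(\tfrac1a-\tfrac1b\right)v$ and $p-q=\left(\tfrac1a-\tfrac1b\right)u+\left(\tfrac1a+\tfrac1b\right)v$, which are not multiples of $u$ and $v$ unless $a=b$; so applying the hypothesis to $(p,q)$ does not produce the reverse inequality for $(u,v)$, and the one-sided unit-sphere conditions do not reduce to the equality case by this substitution. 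They are separate, genuinely nontrivial characterizations of inner product spaces, and the paper simply cites Day for all three versions of $\sim$; you need to do the same (or supply a real proof), not the doubling trick.

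Second, your plan for (c) and (d) --- ``rewrite the relations entirely in terms of the chord lengths $\|u\pm v\|$'' --- is not available: the terms $\mathrm{ang}(y,y-x)$ and $\mathrm{ang}(x,x-y)$ involve the unit vector in the direction $y-x$, and quantities such as $\bigl\|\tfrac{y}{\|y\|}-\tfrac{y-x}{\|y-x\|}\bigr\|$ are not functions of $\|u+v\|$ and $\|u-v\|$ in a general normed space, even after normalizing $\|x\|=\|y\|$. The reductions that do work are of the collapsing type you used for (b): for (c), replace $x$ by $tx$ (homogeneity) and let $t\to+\infty$, so that $\mathrm{ang}(x,tx-y)\to 0$ and $\mathrm{ang}(y,y-tx)\to\mathrm{ang}(y,-x)$, leaving exactly weak supplementarity; for (d), apply the exterior-angle relation to the pair $(-\beta y,\alpha x)$ and use homogeneity and opposite invariance to obtain $\mathrm{ang}(x,\alpha x+\beta y)+\mathrm{ang}(\alpha x+\beta y,y)\sim\mathrm{ang}(x,y)$, i.e.\ weak additivity, and then conclude via (b)$\Rightarrow$(a). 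With these repairs (and the citation replacing the doubling step) your argument coincides with the paper's proof; as written, both steps are genuine gaps.
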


\begin{proof}
It is clear that \textbf{(e)} implies all the other assertions. Hence we only have to prove the converse for each of these implications. \\

We start with $\mathbf{(a)}\Rightarrow\mathbf{(e)}$. Let $x,y \in X_o$ be unit vectors for which $\mathrm{ang}(x,y) + \mathrm{ang}(-x,y) \leq \pi$. Thus, $\cos(\mathrm{ang}(-x,y)) \geq \cos(\pi-\mathrm{ang}(x,y)) = -\cos(\mathrm{ang}(x,y)$. On the other hand, we have the equalities
\begin{align*} ||x-y||^2 = 2 - 2\cos(\mathrm{ang}(x,y)) \ \mathrm{and} \\ ||x+y||^2 = 2-2\cos(\mathrm{ang}(-x,y)).
\end{align*}
It follows that $||x-y||^2+||x+y||^2 \leq 4$. Using similar methods for the other cases it follows that \textbf{(a)} implies $||x-y||^2+||x+y||^2 \sim 4$ for any unit vectors $x,y \in X_o$. This is a characterization of inner product spaces (see \cite{Day}).\\

Now we prove that \textbf{(b)} implies \textbf{(a)} (and hence \textbf{(e)}). If weak additivity holds, then for any independent $x,y \in X_o$ and $\alpha,\beta > 0$ we have $\mathrm{ang}(x,y)+\mathrm{ang}(y,\beta y-\alpha x) \sim \mathrm{ang}(x,\beta y - \alpha x)$. By continuity and homogeneity, considering $\beta \rightarrow 0$ yields $\mathrm{ang}(y,\beta y-\alpha x) \rightarrow \mathrm{ang}(y,-x)$ and $\mathrm{ang}(x,\beta y - \alpha x) \rightarrow \pi$. Hence, $\mathrm{ang}(x,y)+\mathrm{ang}(-x,y) \sim \pi$. \\

Assume now that \textbf{(c)} holds. Thus, for any $t > 0$ we have $\mathrm{ang}(x,y)+\mathrm{ang}(y,y-tx)+\mathrm{ang}(x,tx-y) \sim \pi$. Since $\lim_{t\rightarrow+\infty}\frac{y-tx}{||y-tx||} = -\frac{x}{||x||}$ (see the proof of Proposition \ref{propdafangle}), we have
\begin{align*} \lim_{t\rightarrow+\infty}\mathrm{ang}(y,y-tx) = \lim_{t\rightarrow+\infty}\cos^{-1}\left(1-\frac{1}{2}\left|\left|\frac{y}{||y||}-\frac{y-tx}{||y-tx||}\right|\right|^2\right) = \\ = \mathrm{ang}(-x,y).
\end{align*}
From Proposition \ref{propdafangle}\textbf{(b)} we have $\lim_{t\rightarrow+\infty}\mathrm{ang}(x,tx-y) = \mathrm{ang}(x,x) = 0$, and thus it follows that $\mathrm{ang}(x,y) + \mathrm{ang}(-x,y) \sim \pi$. Therefore \textbf{(a)} holds, and so also \textbf{(e)}.\\

To complete the proof we show that if \textbf{(d)} holds, then $\mathrm{ang}(\cdot,\cdot)$ is weakly additive. Assuming that $\mathrm{ang}(\cdot,\cdot)$ has the exterior angle property, we have for any independent $x,y \in X_o$ and $\alpha, \beta > 0$
\begin{align*} \mathrm{ang}(x,\alpha x+\beta y) + \mathrm{ang}(\alpha x + \beta y,y) = \\= \mathrm{ang}(\alpha x,\alpha x+\beta y) + \mathrm{ang}(-\beta y,-\beta y-\alpha x)\sim \\\sim \mathrm{ang}(\alpha x,\beta y) = \mathrm{ang}(x,y),
\end{align*}
where we used homogeneity and opposite invariance. This confirms weak additivity.

\end{proof}

The D-A-F angle inspires an orthogonality relation which was also studied in \cite{D-A-F}. We define and study it now.

\begin{defi} Given two vectors $x,y \in X$, we say that $x$ is \textit{orthogonal} to $y$ (denoted simply by $x \dashv y$) if $||x|| \cdot ||y|| = 0$ or
\begin{align*} \left|\left|\frac{x}{||x||}-\frac{y}{||y||}\right|\right| = \sqrt{2}.
\end{align*}
\end{defi}

This orthogonality type is clearly symmetric and (positively) homogeneous. Also, as a consequence of Proposition \ref{propdafangle}\textbf{(c)} we have that for any $x,y \in X$ there exists some $\alpha \in \mathbb{R}$ such that $x \dashv (\alpha x + y)$.\\

In \cite{D-A-F} the authors point out that homogeneity of this orthogonality (in the sense that $x \dashv y$ implies $\alpha x \dashv \beta y$ for any $\alpha,\beta \in \mathbb{R}$) does not characterize inner product spaces, even if the considered space is strictly convex. As a counterexample the authors present a Minkowski plane whose unit circle is constructed by slightly deforming the sides of a regular octagon (notice that in the normed plane with unit circle given by a regular octagon the orthogonality relation is clearly homogeneous).

\subsection{Angular bisectors} \label{bisectors}

It is very natural that the concept of angular bisectors (= bisectors of angles, meant analogously to that notion as it occurs in Euclidean geometry) was also investigated for general normed spaces. Clearly, different concepts of angular bisectors are, of course, only acceptable if they coincide with Euclidean angular bisectors in the inner-product case.\\

Glogovskii \cite{Glo} defined the angular bisector of an angle $\wk\mathbf{aob}$ (thus called \emph{Glogovskii bisector}) in a Minkowski plane as the ray of all points each having the same distances to the rays $\left.[o,a\right>$ and $\left.[o,b\right>$, respectively. Thus, the three Glogovskii bisectors of a triangle have a point in common which is clearly the incenter of that triangle (see also \cite[\S 8.1]{martiniantinorms} and, for the Mannhattan norm, particularly \cite{Sow}). On the other hand, the angular bisector of $\wk\mathbf{aob}$ was defined by Busemann (see \cite{Bus5}) as follows: if $o$ is the origin and $a, b$ are boundary points of the unit ball of a normed plane, then the midpoint $d$ of $\mathrm{seg}[a,b]$ yields with $\left.[o,d\right>$ the \emph{Busemann bisector} of $\wk\mathbf{aob}$. In \cite{Bus5}, Busemann gives characterizations of normed planes within a more general class of planes via properties of angular bisectors defined in this way; see also Busemann \cite[Chapter 1]{Bus4} and Phadke \cite{Ph}. We note as well that (like the notion of Busemann bisector) the angular bisector property was also investigated in more general spaces, still yielding characterizations of (certain) normed spaces and, in particular, of inner product spaces; see \cite{A-F}.\\

The concurrence of Glogovskii bisectors of triangles still holds if one extends the framework of norms to general convex distance functions (or gauges), obtained from norms by deleting simply the symmetry axiom; see Guggenheimer \cite{Gug}. This concurrence of Glogovskii bisectors was also extended to simplices in higher dimensional normed spaces by Averkov \cite{Av}. D\"{u}velmeyer \cite{Due} showed that a normed plane is a Radon plane if and only if Glogovskii's and Busemann's definitions of angular bisectors coincide. A different proof of this fact can be found in \cite{bmt}. This result yields also a characterization of Radon curves that only involves basic vector space concepts; cf. \cite[Proposition 5.2]{Ba-Ma-Te}. D\"{u}velmeyer \cite{Duev} also studied the relations of Glogovskii and Busemann angular bisectors with bisectors given by angle measures in the sense of Brass (see Subsection \ref{brassanglemeasures}), the latter defined in the intuitive way. He proved the following

\begin{teo} Let $(X,||\cdot||)$ be a Minkowski plane endowed with an angle measure $\mu$ (in the sense of Brass, and with the non-degeneracy hypothesis presented in Subsection \ref{brassanglemeasures}). Then the following properties are equivalent:\\

\normalfont
\noindent\textbf{(a)} \textit{For any angle, the Glogovskii angular bisector coincides with the bisector given by $\mu$.}\\

\noindent\textbf{(b)} \textit{The Busemann angular bisector of any angle coincides with its $\mu$-angular bisector.}\\

\noindent\textbf{(c)} \textit{The plane is Euclidean and $\mu$ is the standard Euclidean angle measure.}
\end{teo}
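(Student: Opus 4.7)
The implications (c) $\Rightarrow$ (a) and (c) $\Rightarrow$ (b) are immediate, since in the Euclidean plane endowed with its standard angle measure the Glogovskii, Busemann, and $\mu$-angular bisectors all coincide with the classical Euclidean bisector. Hence the content of the theorem rests in the two implications (b) $\Rightarrow$ (c) and (a) $\Rightarrow$ (c).

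My plan for (b) $\Rightarrow$ (c) is to verify the Thales-type condition of Lemma~\ref{lemma29}, so that the I-measure theorem proved earlier forces the norm to be Euclidean and $\mu$ to be the standard Euclidean angle measure. Given independent $v, w \in S$, set $u := -w$ and $v' := v$, both in $S$ and independent, and let $x := (u+v')/\|u+v'\|$ and $y := (u-v')/\|u-v'\|$. Since $u, v' \in S$, Busemann's definition produces $x$, $y$, and $-y$ as the Busemann bisector directions of the angles $\wk uov'$, $\wk uo(-v')$, and $\wk v'o(-u)$, respectively. Invoking hypothesis (b), each of these becomes a $\mu$-bisector, giving
\begin{align*}
\mathrm{ang_{\mu}}(u,x) &= \mathrm{ang_{\mu}}(x,v'),\\
\mathrm{ang_{\mu}}(u,y) &= \mathrm{ang_{\mu}}(y,-v'),\\
\mathrm{ang_{\mu}}(v',-y) &= \mathrm{ang_{\mu}}(-y,-u).
\end{align*}
Symmetry, opposite invariance, additivity and supplementarity of $\mathrm{ang_{\mu}}$ (all automatic for angle functions arising from angle measures) then yield $\mathrm{ang_{\mu}}(u,v') = 2\,\mathrm{ang_{\mu}}(u,x)$ and $\mathrm{ang_{\mu}}(v',-u) = 2\,\mathrm{ang_{\mu}}(u,y)$. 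Adding and invoking the supplementarity $\mathrm{ang_{\mu}}(u,v') + \mathrm{ang_{\mu}}(v',-u) = \pi$ gives $\mathrm{ang_{\mu}}(u,x) + \mathrm{ang_{\mu}}(u,y) = \pi/2$; since $u$ lies on the shorter arc between $x$ and $y$, a final application of additivity produces $\mathrm{ang_{\mu}}(x,y) = \pi/2$. Substituting back $u = -w$ and $v' = v$ gives $x = (v-w)/\|v-w\|$ and $y = -(v+w)/\|v+w\|$, which are the normalized leg directions of the Thales angle $\wk vw(-v)$ translated to the origin, as required.

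For (a) $\Rightarrow$ (c) my plan is to reduce to the preceding case by first proving that hypothesis (a) forces the plane to be Radon. Once Radon is established, D\"uvelmeyer's theorem \cite{Due} ensures that the Glogovskii and Busemann bisectors coincide for every angle, so hypothesis (a) upgrades to (b) and the previous paragraph finishes the argument. To extract the Radon property from (a), I would analyze the Glogovskii bisector through an explicit Birkhoff-distance formula: an equal-distance computation shows that for independent $a, b \in S$ the Glogovskii bisector of $\wk aob$ has direction $D(a,b)\,a + D(b,a)\,b$, where $D(a,b) := \inf_{s\in\mathbb{R}}\|sa+b\|$. Because the $\mu$-bisector is, by construction, an additive midpoint for the centrally symmetric Borel measure $\mu$, requiring this Glogovskii formula to compute the $\mu$-midpoint of every arc imposes a rigid consistency constraint; I expect it to collapse to the identity $D(a,b) = D(b,a)$ for every independent $a, b \in S$, which by Lemma~\ref{qsine} is the symmetry of the sine functional, i.e., the defining property of Radon planes.

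The main technical obstacle is precisely this passage (a) $\Rightarrow$ Radon. Unlike the transparent formula $(a+b)/\|a+b\|$ for the Busemann bisector, which cooperates cleanly with the isosceles configurations exploited in the second paragraph, the Glogovskii direction $D(a,b)\,a + D(b,a)\,b$ is genuinely asymmetric in $a$ and $b$ outside of Radon planes, so the short chain of identities above cannot be transplanted verbatim; the symmetry of $D$ must instead be extracted indirectly. A natural route is to iterate Glogovskii bisection on two overlapping angles sharing a common leg and to impose that the resulting dyadic $\mu$-measures agree with the independent bisection of their union; the resulting functional equation should collapse to $D(a,b) = D(b,a)$. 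Once this symmetry is in hand, the remaining reduction to (b) and hence to the I-measure characterization of Euclidean planes is routine.
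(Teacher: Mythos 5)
Your treatment of (c)~$\Rightarrow$~(a),(b) and your argument for (b)~$\Rightarrow$~(c) are essentially sound: the three Busemann-bisector identities for the angles at $o$ spanned by the pairs $(u,v')$, $(u,-v')$ and $(v',-u)$, combined with symmetry, homogeneity, additivity and supplementarity of $\mathrm{ang}_{\mu}$ (all valid for measure-induced angle functions), do give $\mathrm{ang}_{\mu}(v-w,-v-w)=\frac{\pi}{2}$ for all independent $v,w\in S$, i.e.\ the Thales condition of Lemma~\ref{lemma29}. One repair is needed at the end: you invoke the I-measure theorem, but its hypothesis (equal base angles in every isosceles triangle) is not what you verified; what you actually obtain is condition (ii) of Lemma~\ref{lemma29}, i.e.\ that $\mu$ is a T-measure. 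You should therefore either quote the T-measure theorem to get that the plane is Euclidean and then add the short dyadic-bisection argument (using hypothesis (b) once more, plus additivity and $\sigma$-additivity) to conclude that $\mu$ is the standard measure, or observe explicitly that the proof of the I-measure theorem uses only conditions (i)/(ii), continuity and non-degeneracy, so it applies verbatim. This is a minor patch, not a flaw in the idea.

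The genuine gap is (a)~$\Rightarrow$~(c). Your formula for the Glogovskii direction, $D(a,b)a+D(b,a)b$ with $D(a,b)=\inf_{s}\|sa+b\|$, is correct, and the reduction ``Radon $\Rightarrow$ Glogovskii $=$ Busemann for every angle'' via D\"uvelmeyer's bisector theorem \cite{Due} is legitimate; but the pivotal step ``(a) forces the plane to be Radon'' is never proved: you only say you \emph{expect} the consistency constraint to collapse to $D(a,b)=D(b,a)$ and that the proposed functional equation \emph{should} do so. Note moreover that transplanting your three-bisector computation to Glogovskii bisectors is not empty — the same algebra yields $\mathrm{ang}_{\mu}(p,q)=\frac{\pi}{2}$ for $p=D(a,b)a+D(b,a)b$ and $q=D(a,b)a-D(b,a)b$ — but since $p+q\propto a$ and $p-q\propto b$, this pair is isosceles orthogonal exactly when $D(a,b)=D(b,a)$, i.e.\ precisely in the Radon situation you are trying to establish; so no Thales-type conclusion follows from (a) without a genuinely new input. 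As it stands, your proposal proves the equivalence of (b) and (c) (and the trivial implications from (c)), while the implication (a)~$\Rightarrow$~(c) — the harder half of D\"uvelmeyer's theorem \cite{Duev}, which the survey states without proof — remains open in your write-up.
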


In a normed space endowed with a symmetric and homogeneous angle function $\mathrm{ang}:X_o\times X_o\rightarrow \mathbb{R}$ we have a well defined system of bisectors if for any $x,y\in X_o$ there exists a unique (up to scaling) positive linear combination $z = \alpha x + \beta y$ for which  $\mathrm{ang}(x,z) = \mathrm{ang}(y,z)$. In \cite{F-D-A} bisectors of the D-A-F angle (see Subsection \ref{d-a-f angle}) were used for characterizing inner product spaces as follows.

\begin{teo} Let $(X,||\cdot||)$ be a normed space and let $\mathrm{ang}:X_o\times X_o\rightarrow\mathbb{R}$ be its D-A-F angle function. For any vectors $x,y \in X_o$ there exists, up to scaling, a unique positive linear combination $z = \alpha x+\beta y$ for which $\mathrm{ang}(x,z) = \mathrm{ang}(y,z)$. If for any unit vectors $x,y \in X_o$ we have that the positive combination $z = x+y$ is such that $\mathrm{ang}(x,z) = \mathrm{ang}(y,z) = \frac{1}{2}\mathrm{ang}(x,y)$, then the space is Euclidean.
\end{teo}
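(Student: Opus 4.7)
The statement combines two claims: existence and uniqueness (up to scaling) of a D-A-F angle bisector, and the rigidity statement that having $z=x+y$ as bisector for every pair of unit vectors forces the norm to be Euclidean. The plan is to handle the two assertions separately, reducing the second to weak additivity via Theorem \ref{teodafangle}.

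For the existence and uniqueness I would use positive homogeneity of $\mathrm{ang}$ to reduce to the case $\|x\|=\|y\|=1$, and parametrize positive combinations (up to scaling) by $z_s = sx+(1-s)y$ with $s\in(0,1)$. Set $\phi(s):=\mathrm{ang}(x,z_s)-\mathrm{ang}(y,z_s)$. Writing $z_s$ as a positive multiple of $y+\tau x$ with $\tau=s/(1-s)$ and of $x+\tau' y$ with $\tau'=(1-s)/s$, and applying Proposition \ref{propdafangle} in each case (the second after swapping the roles of $x$ and $y$), one finds that $s\mapsto\mathrm{ang}(x,z_s)$ is non-increasing from $\mathrm{ang}(x,y)$ down to $0$, while $s\mapsto\mathrm{ang}(y,z_s)$ is non-decreasing from $0$ up to $\mathrm{ang}(x,y)$. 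Thus $\phi$ is continuous and non-increasing with $\phi(0^+)>0$ and $\phi(1^-)<0$, so the Intermediate Value Theorem produces a zero $s_0\in(0,1)$, and uniqueness up to scaling follows from the strict monotonicity available under strict convexity (second half of Proposition \ref{propdafangle}).

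For the rigidity part, I would first rewrite the hypothesis using the D-A-F formula. Put $\theta=\mathrm{ang}(x,y)$, $a=\|x+y\|$, $b=\|x-y\|$, $u=(x+y)/a$, $v=(x-y)/b$, so that $x=\frac{1}{2}(au+bv)$ and $y=\frac{1}{2}(au-bv)$. Since $\cos\theta=1-b^2/2$ yields $\cos(\theta/2)=\sqrt{1-b^2/4}$, the assumption $\mathrm{ang}(x,x+y)=\theta/2$ becomes $\|x-u\|^2=2-\sqrt{4-b^2}$, and the decomposition $x-u=\frac{1}{2}((a-2)u+bv)$ turns this into
\begin{equation*}
\|(a-2)u+bv\|^2 \;=\; 8-4\sqrt{4-b^2}.
\end{equation*}
The companion $\mathrm{ang}(y,x+y)=\theta/2$ gives $\|(a-2)u-bv\|^2 = 8-4\sqrt{4-b^2}$, whence $(a-2)u\dashv_I bv$; together with the automatic $au\dashv_I bv$ from $\|x\|=\|y\|=1$ this provides a rigid pair of isosceles-orthogonality conditions on the line $\mathbb R u$, with explicit magnitudes depending only on $b$.

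To conclude, I would iterate the bisector hypothesis on the pairs $(x,u)$ and $(u,y)$ and on their successive bisectors, obtaining at each dyadic refinement a positive combination of $x$ and $y$ whose D-A-F angle with $x$ takes every dyadic value $(k/2^n)\theta$. The continuity of $\mathrm{ang}$ (axiom 1) and the strict monotonicity of Proposition \ref{propdafangle} along the arc from $x$ to $y$ on the unit sphere then propagate these identities to weak additivity in the sense of Definition \ref{weakerpropoerties}(ii), namely
\begin{equation*}
\mathrm{ang}(x,\alpha x+\beta y) + \mathrm{ang}(\alpha x+\beta y,y) \;=\; \mathrm{ang}(x,y) \qquad \text{for all } \alpha,\beta>0,
\end{equation*}
after which Theorem \ref{teodafangle} concludes that $(X,\|\cdot\|)$ is an inner product space. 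The main obstacle is precisely this propagation: the bisector hypothesis supplies additivity only across each midpoint (i.e., $\mathrm{ang}(x,u)+\mathrm{ang}(u,y)=\theta$ and the analogous equalities at deeper bisectors), but extending it to additivity across an \emph{arbitrary} intermediate positive combination is delicate, since at a non-bisector point the two partial angles are \emph{a priori} not determined by the bisection data alone. Rigorously closing this step requires combining the dyadic density of the angle values $(k/2^n)\theta$ in $[0,\theta]$, the monotone parametrization of the unit arc by $s$, and the magnitude identity $\|(a-2)u\pm bv\|^2=8-4\sqrt{4-b^2}$ that was recorded above to pin down the norm on nearby vectors.
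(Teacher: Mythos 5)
The paper itself does not prove this theorem (it is quoted from \cite{F-D-A}), so your proposal has to stand on its own; it does not yet. Your existence argument is fine: homogeneity reduces to unit vectors, Proposition \ref{propdafangle} (applied twice, with the roles of $x$ and $y$ swapped) gives that $s\mapsto\mathrm{ang}(x,z_s)$ is non-increasing and $s\mapsto\mathrm{ang}(y,z_s)$ is non-decreasing, and the intermediate value theorem yields a bisector. But your uniqueness step invokes the strict-convexity half of Proposition \ref{propdafangle}, whereas the theorem is stated for an arbitrary normed space. Without strict convexity the difference $\phi$ is only non-increasing and could a priori vanish on a whole interval; excluding this requires an equality-case analysis (if $\|\hat z-\hat x\|=\|\hat z-\hat y\|=c$ on a nondegenerate subarc, monotonicity forces both distances to be constant there, the subarc must then lie on a facet of the unit circle, and comparing the width of the unit ball at the level $1-c$ of the facet functional with $c$ times the facet length gives a contradiction — an argument in the spirit of the equality discussion in Corollary \ref{thyanglefunction}). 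As written, the first assertion is established only for strictly convex norms.

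The second assertion — the actual characterization — is not proved. The identities you record, $\|(a-2)u\pm bv\|^2=8-4\sqrt{4-b^2}$ together with $au\dashv_I bv$, are correct but are merely a reformulation of the hypothesis; nothing is deduced from them. The route through weak additivity (Definition \ref{weakerpropoerties}, then Theorem \ref{teodafangle}) breaks exactly where you admit it does: iterating the bisector hypothesis only controls angles between \emph{consecutive} points of the dyadic chain; already $\mathrm{ang}(x,u_3)=\tfrac{3}{4}\theta$ would require additivity across $u$, which is the very thing to be proved, so the claim that the dyadic points realize all values $k\theta/2^n$ of $\mathrm{ang}(x,\cdot)$ is unjustified, their density on the arc is likewise unestablished without strict monotonicity, and the closing sentence is a statement of intent rather than an argument. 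Since this propagation is the heart of the rigidity statement, the theorem is not proved. A more promising line is to read the hypothesis as a functional equation for the norm: it says precisely that for all unit $x,y$ both $\bigl\|x-\tfrac{x+y}{\|x+y\|}\bigr\|$ and $\bigl\|y-\tfrac{x+y}{\|x+y\|}\bigr\|$ equal the explicit function $\sqrt{2-\sqrt{4-\|x-y\|^2}}$ of $\|x-y\|$ alone, and to feed this into a classical norm-identity characterization of inner product spaces, instead of trying to manufacture additivity from bisection data.
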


In the same paper the authors asked whether or not a certain weaker property for the bisectors of the D-A-F angle would characterize Euclidean spaces. The question was positively answered by Martini and Wu \cite{Ma-Wu}. They proved the following

\begin{teo} Let $(X,||\cdot||)$ be a Minkowski space with D-A-F angle $\mathrm{ang}(\cdot,\cdot)$. If for any independent vectors $x,y \in X_o$ the positive linear combination $z = \alpha x + \beta y$, for which $\mathrm{ang}(x,z) = \mathrm{ang}(y,z)$ holds, also accomplishes $\mathrm{ang}(x,z) = \frac{1}{2}\mathrm{ang}(x,y)$, then $||\cdot||$ is derived from an inner product.
\end{teo}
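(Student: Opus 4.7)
The plan is to show that the hypothesis forces the D-A-F angle to be additive in the sense of structural axiom 4, since Theorem \ref{teodafangle} (the implication (b) $\Rightarrow$ (e), applied with the $\sim$ symbol being equality) then identifies the space as an inner product space. Fix independent $x, y \in X_o$; it suffices to work in the Minkowski plane they span. By Proposition \ref{propdafangle}, the positive cone of $x, y$ parametrizes a continuous arc of the unit sphere along which $\mathrm{ang}(x, \cdot)$ is a continuous non-decreasing surjection onto $[0, \mathrm{ang}(x, y)]$.

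Starting from $z_0 := x$ and $z_1 := y$, I would iteratively build a dyadic family of directions $\{z_{k/2^n}\}_{0 \leq k \leq 2^n}$ by defining $z_{(2j+1)/2^{n+1}}$ to be the bisector of $z_{j/2^n}$ and $z_{(j+1)/2^n}$ (which exists by the hypothesis, and is unique in the strictly convex case). At each step the hypothesis supplies the half-angle identity $\mathrm{ang}(z_{j/2^n}, z_{(2j+1)/2^{n+1}}) = \frac{1}{2}\mathrm{ang}(z_{j/2^n}, z_{(j+1)/2^n})$. A simultaneous induction on $n$ then aims to establish the full dyadic identity $\mathrm{ang}(z_{k_1/2^n}, z_{k_2/2^n}) = \frac{|k_2 - k_1|}{2^n}\mathrm{ang}(x, y)$ for all dyadic pairs; in particular this yields the additivity $\mathrm{ang}(x, z_{k/2^n}) + \mathrm{ang}(z_{k/2^n}, y) = \mathrm{ang}(x, y)$ at every dyadic point.

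The key tool in the inductive step is the uniqueness of the bisector in a given positive cone (valid in the strictly convex case). Whenever the midpoint index $(k_1+k_2)/2$ is an integer, the induction hypothesis already makes $z_{((k_1+k_2)/2)/2^n}$ angle-equidistant from the two endpoints, so by uniqueness it must coincide with their Martini--Wu bisector, and the half-angle hypothesis then doubles the previously known value for the full gap. Odd-gap pairs at level $n$ are handled by viewing them as even-gap pairs at level $n+1$, where the appropriate midpoint has been constructed and the same mechanism applies; this can be organised as a nested induction on level and gap. The non-strictly-convex case requires a separate (approximation-based) argument to recover bisector uniqueness, and I expect the propagation of the dyadic identity across odd gaps to be the most delicate part of the write-up.

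Once the dyadic identity is in hand, continuity of $\mathrm{ang}$ (axiom 1) together with the density of the dyadic directions on the arc, which follows since consecutive dyadic angles shrink as $\frac{1}{2^n}\mathrm{ang}(x, y) \to 0$, imply that $F(z) := \mathrm{ang}(x, z) + \mathrm{ang}(z, y) - \mathrm{ang}(x, y)$, being continuous and vanishing on the dense dyadic set, vanishes identically on the positive cone of $x, y$. Since $x, y$ were arbitrary independent vectors, the D-A-F angle satisfies structural axiom 4, and Theorem \ref{teodafangle} closes the argument.
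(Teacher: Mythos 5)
First, a framing remark: the paper does not prove this theorem at all — it is stated as a result of Martini and Wu \cite{Ma-Wu} — so your argument can only be judged on its own terms, and it has a genuine gap exactly at the point you flag as ``delicate''. The doubling mechanism itself is fine: if a constructed point $m$ in the positive cone of $p,q$ is already known to satisfy $\mathrm{ang}(p,m)=\mathrm{ang}(m,q)=\theta$, then by uniqueness of the D-A-F bisector (which the theorem quoted from \cite{F-D-A} gives in every normed space, so your detour about the non-strictly-convex case is unnecessary) $m$ is the bisector of $p,q$, and the hypothesis yields $\mathrm{ang}(p,q)=2\theta$. But this mechanism applies only when the two sub-angles are equal and already known, and bookkeeping shows it only ever determines gaps that are powers of two: adjacent gaps at level $n$ equal $2^{-n}\mathrm{ang}(x,y)$ by construction, and a gap $2m$ reduces to two equal gap-$m$ pieces — nothing else is produced. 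Your treatment of an odd gap $g\geq 3$, namely rewriting the pair at level $n$ as a gap-$2g$ pair at level $n+1$ and bisecting at the new midpoint, needs as input the two gap-$g$ half-angles at level $n+1$, i.e.\ the same odd gap one level deeper; the ``nested induction on level and gap'' therefore never terminates, since the reduction does not decrease the gap. Concretely, the scheme never determines $\mathrm{ang}(x,z_{3/4})$: the constructed points between $x$ and $z_{3/4}$ split that angle into pieces $\tfrac{\omega}{4}+\tfrac{\omega}{2}$ or $\tfrac{\omega}{2}+\tfrac{\omega}{4}$ (with $\omega=\mathrm{ang}(x,y)$), neither splitting is into equal parts, so neither point is the bisector of the pair $(x,z_{3/4})$ and the hypothesis says nothing about it; monotonicity only sandwiches $\mathrm{ang}(x,z_{3/4})$ between $\tfrac{\omega}{2}$ and $\omega$, and no triangle-type inequality is available to do better — indeed Theorem \ref{teodafangle} shows that any one-sided additivity inequality for the D-A-F angle already forces the space to be Euclidean, so such an inequality cannot be assumed.

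This gap is fatal to the endgame: the identity $\mathrm{ang}(x,w)+\mathrm{ang}(w,y)=\mathrm{ang}(x,y)$ gets verified only at dyadic points $w=z_{k/2^n}$ for which both $k$ and $2^n-k$ are powers of two, which essentially means only $w=z_{1/2}$ (and its analogues inside sub-cones), not a dense subset of the arc. Hence the continuity-plus-density step cannot deliver weak additivity, and Theorem \ref{teodafangle} cannot be invoked. If you want to salvage the strategy, you need an additional idea beyond repeated bisection; a natural one is to use the explicit chord form of this angle, $\mathrm{ang}(u,v)=2\arcsin\left(\tfrac{1}{2}\|u-v\|\right)$ for unit vectors $u,v$, to translate the halving hypothesis into a metric statement about chords of the unit circle and the points of the circle equidistant from the endpoints of a chord, and then compare with known norm characterizations of inner product spaces — rather than trying to regenerate an additive angle measure from the bisection data alone.
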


We finish this part by mentioning a natural concept related to angular bisectors, which is still waiting for its (promising) extension from the Euclidean norm to general normed planes (see \cite{Ai-Au} and \cite{A-A-A-G}): For a given polygon $P$, its \emph{straight skeleton} is created as the interference pattern of certain wavefronts propagated from the edges of $P$. We mention this here since strong relations of this concept to angular bisectors for non-Euclidean types of distance functions are confirmed; cf. \cite{B-D-G}.\\

\subsection{Other related concepts}

All the angle functions and angular measures discussed so far in this survey do not exhaust all the existing theory. We have chosen to explore in more detail the angle concepts that seem to be important and promising with respect to possible further research. In this subsection we briefly outline some further concepts that are, from our point of view, also important and therefore worth mentioning. \\

 On further angle concept that appears in the literature is that of the \emph{Dekster angle}, firstly defined in the paper \cite{Dek1}. It is not an angle function in the sense of Section \ref{abstractangles} nor, unless the dimension of the space is 2, an angle measure in the sense of Section \ref{anglemeasures}. To construct this angle, we first assume that $E^d$ is the usual $d$-dimensional Euclidean space, with unit sphere $S^{d-1}$, and that the Minkowski norm is given by a centrally symmetric $d$-dimensional convex body $B$ in the usual way. We set
\begin{align*}||v|| = \inf\{\lambda \in \mathbb{R}^+:x\in\lambda B\}, \end{align*}
for $x \in E^d$. We let $\pi:S^{d-1}\rightarrow\partial B$ be the projection from the origin, and we define $r:S^{d-1}\rightarrow\mathbb{R}$ to be $r(\varphi) = ||\pi(\varphi)||_{\mathrm{E}}$, where $||\cdot||_{\mathrm{E}}$ denotes the Euclidean length. Also, for every direction $\varphi \in S^{d-1}$ we denote by $E^{d-1}(\varphi)$ its orthogonal complement (in $E^d$) and by $b(\varphi)$ the $(d-1)$-dimensional Euclidean volume of the projection of $B$ onto $E^{d-1}(\varphi)$. Finally, if $U \subseteq \partial B$ is a closed domain, we define the \emph{angular measure} $\Phi(U)$ of the \emph{central cone} determined by $U$ to be
\begin{align*}\Phi(U) = \epsilon_{d-1}\int_{\Omega}\frac{r^{d-1}(\varphi)}{b(\varphi)}dS, \end{align*}
where $\epsilon_{d-1}$ is the volume of a unit ball in $E^{d-1}$, and $\Omega = \pi^{-1}(U)$. Dekster \cite[Theorem 1.2]{Dek1} proved that if $\Omega=\pi^{-1}(U)$ is a closed domain whose boundary is piecewise regular, then the value of $\Phi(U)$ is invariant under linear isomorphisms (considering the Minkowski space determined by the image of $B$ through such a map). Curvature concepts based on this angle definition were also investigated. In addition, the author defines the \emph{total angular measure around a point} to be
\begin{align*} \tau = \varphi(\partial B) = \epsilon_{d-1}\int_{S^{d-1}}\frac{r^{d-1}(\varphi)}{b(\varphi)}dS\,.\end{align*}
 Performing some calculations he proved that, for example, the total angular measure around a point is less than $2\pi$ for the Minkowski planes with unit circle given by a square and a regular hexagon, respectively. Ling \cite{Ling} proved that in two-dimensional spaces we always have the estimate $\sqrt{2\pi} \leq \tau \leq 8$. Moreover, in \cite{Dek2} Dekster's angle is used to endow the space of directions of a Minkowski space (which is topologically a sphere) with a certain metric. \\

\emph{Trigonometry} is, of course, closely related to angles. The first trigonometric function defined for Minkowski geometry seems to have been given by Finsler \cite{Fi2}. Later, Barthel \cite{Bar1}, Busemann \cite{Bus3} and Petty \cite{Pet} studied sine and cosine functions and used them to investigate concepts such as curvatures. Such trigonometric functions were also used by Guggenheimer \cite{Gug1,Gug2}, and the cosine one was recently revisited in \cite{Ba-Sho}. A good reference to them is \cite[Chapter 8]{Tho}. Trigonometric functions also appeared when Petty and Barry \cite{P-B} and Guggenheimer \cite{Gug3} investigated plane Minkowski geometry considering unit circles given by solutions of second order differential equations of type $f''(t) +p(t)f(t) = 0$, known as ``Hill equations" (see also \cite{Wal}). Recently, a different sine function was studied and used for defining geometric constants which can estimate, for example, the difference between orthogonality types (see \cite{bmt,Ba-Ma-Te,Ba-Ma-Te2}). \\

A very recent new concept related to angles is given in \cite{Ba-Ho-Ma}. The authors use a generalization of Brass' concept of angular measure to obtain groups of rotations for Minkowski planes. Using such rotations and translations one can define motion groups for Minkowski planes, with the disadvantage that such motions are not necessarily isometries. With these new tools, roulettes are defined and analogues to the Euler-Savary equations are obtained. \\

\bibliography{bibliography}

\end{document}